\documentclass[12pt,reqno]{amsart}

\usepackage{amsthm, amsmath, amsfonts, amssymb, graphicx, tikz-cd, MnSymbol, comment, hyperref, enumerate}

\usepackage[utf8]{inputenc}
\usepackage[normalem]{ulem}

\DeclareFontFamily{U}{mathb}{\hyphenchar\font45}
\DeclareFontShape{U}{mathb}{m}{n}{
<-6> mathb5 <6-7> mathb6 <7-8> mathb7
<8-9> mathb8 <9-10> mathb9
<10-12> mathb10 <12-> mathb12}{}
\DeclareSymbolFont{mathb}{U}{mathb}{m}{n}
\DeclareMathSymbol{\llcurly}{\mathrel}{mathb}{"CE}

\newtheorem{theorem}{Theorem}[section]
\newtheorem{lemma}[theorem]{Lemma}
\newtheorem{proposition}[theorem]{Proposition}
\newtheorem{corollary}[theorem]{Corollary}

\theoremstyle{definition}
\newtheorem{definition}[theorem]{Definition}

\newtheorem{remark}[theorem]{Remark}
\newtheorem{claim}[theorem]{Claim}

\setlength{\topmargin}{-0.5in}
\setlength{\textheight}{9in}
\setlength{\oddsidemargin}{0in}
\setlength{\evensidemargin}{0in}
\setlength{\textwidth}{6.5in}

\newcommand{\func}[1]{\operatorname{#1}}

\newcommand{\dev}{{\sf DeV}}
\newcommand{\ba}{{\sf BA}}

\newcommand{\PBSp}{{\sf PBSp}}
\newcommand{\Sp}{{\sf Sp}}
\newcommand{\KHaus}{{\sf KHaus}}
\newcommand{\Stone}{{\sf Stone}}

\newcommand\RO{{\mathcal{RO}}}
\newcommand\Clop{{\sf Clop}}

\def\cl{{\sf cl}}
\def\int{{\sf int}}

\newcommand{\Id}{{\rm Id}}
\newcounter{num}\setcounter{num}{1}

\makeatletter
\def\blfootnote{\xdef\@thefnmark{}\@footnotetext}
\makeatother

\begin{document}

\title{De Vries powers and proximity Specker algebras}

\author{G.~Bezhanishvili}
\address{New Mexico State University}
\email{guram@nmsu.edu}

\author{L.~Carai}
\address{University of Salerno\\University of Barcelona}
\email{luca.carai.uni@gmail.com}

\author{P.~J.~Morandi}
\address{New Mexico State University}
\email{pmorandi@nmsu.edu}

\author{B.~Olberding}
\address{New Mexico State University}
\email{bruce@nmsu.edu}

\date{}

\begin{abstract}
By de Vries duality \cite{deV62}, the category $\KHaus$ of compact Hausdorff spaces is dually equivalent to the category $\dev$ of de Vries algebras. In \cite{BMMO15b} an alternate duality for $\KHaus$ was developed, where de Vries algebras were replaced by proximity Baer-Specker algebras. The functor associating with each compact Hausdorff space a proximity Baer-Specker algebra was described by generalizing the notion of a boolean power of a totally ordered domain to that of a de Vries power. It follows that $\dev$ is equivalent to the category $\PBSp$ of proximity Baer-Specker algebras. The equivalence is obtained by passing through $\KHaus$, and hence is not choice-free. In this paper we give a direct algebraic proof of this equivalence, which is choice-free. To do so, we give an alternate choice-free description of de Vries powers of a totally ordered domain.

\bigskip

{\noindent}{\it 2020 Mathematics subject classification.} 06F25; 13G05; 54E05.

\medskip

{\noindent}{\it Keywords.} Proximity, de Vries algebra, Specker algebra, Baer ring, integral domain, boolean power.

\end{abstract}


\maketitle

\section{Introduction}

By the celebrated Stone duality, the category $\ba$ of boolean algebras is dually equivalent to the category $\Stone$ of Stone spaces (zero-dimensional compact Hausdorff spaces). The functor from $\Stone$ to $\ba$ associates with each Stone space $X$, the boolean algebra $\Clop(X)$ of clopen subsets of $X$. Thinking of clopen subsets of $X$ as continuous characteristic functions, we can identify $\Clop(X)$ with the idempotents of the $\mathbb R$-algebra $C(X)$ of all continuous real-valued functions on $X$. The $\mathbb{R}$-subalgebra of $C(X)$ generated by the idempotents of $C(X)$ is then the $\mathbb{R}$-algebra of finitely-valued continuous real-valued functions on $X$. The reals can be replaced by an arbitrary domain $D$ with the discrete topology, thus yielding the notion of a Specker $D$-algebra, which, as the $D$-algebra of (finitely-valued) continuous $D$-valued functions on a Stone space,
 is nothing more than a boolean power of $D$ (see \cite{BMMO15a}). Let $\Sp_D$ be the category of Specker $D$-algebras. Then $\Sp_D$ is dually equivalent to $\Stone$ \cite[Cor.~3.9]{BMMO15a}, hence $\Sp_D$ is equivalent to $\ba$, and we arrive at the following commutative diagram:

\[
\begin{tikzcd}
\Sp_D \arrow[rr] \arrow[dr] && \ba \arrow[dl] \arrow[ll] \\
& \Stone \arrow[ul] \arrow[ur] &
\end{tikzcd}
\]

Stone duality was generalized to compact Hausdorff spaces by de Vries \cite{deV62}. In de Vries duality, with each compact Hausdorff space $X$, we associate the complete boolean algebra $\RO(X)$ of regular open subsets of $X$ equipped with the proximity relation $\prec$ given by $U\prec V$ iff $\cl(U)\subseteq V$. The resulting structures are known as de Vries algebras \cite{Bez10}. Stone duality then lifts to a dual equivalence between the categories $\KHaus$ of compact Hausdorff spaces and $\dev$ of de Vries algebras. 

The same way $\Clop(X)$ can be identified with the idempotents of the $\mathbb R$-algebra $C(X)$ of continuous real-valued functions, $\RO(X)$ can be identified with the idempotents of the $\mathbb R$-algebra $N(X)$ of normal real-valued functions (see, e.g., \cite[Lem.~6.5]{BCM21d}). The notion of a normal function originates in the work of Dilworth \cite{Dil50}, where the MacNeille completion of the lattice $C(X)$ was characterized as the lattice $N(X)$. 

Dilworth's notion of a normal real-valued function requires working with the underlying order of $\mathbb R$. In \cite{BMMO15b}, the notion of a finitely-valued normal function was generalized to an arbitrary totally ordered algebra $D$. This paved a way to generalize boolean powers of $D$, which are defined over a Stone space, to the more general notion of
de Vries powers of $D$, which are defined over a compact Hausdorff space. More precisely, if $(B,\prec)$ is a de Vries algebra and $X$ is its dual compact Hausdorff space, then the de Vries power of $D$ by $(B,\prec)$ is the algebra $FN(X)$ of finitely-valued normal functions $f:X\to D$. The de Vries algebra $(B,\prec)$ can then be identified with the idempotents of $FN(X)$, and the proximity $\prec$ can be lifted to a proximity $\lhd$ on $FN(X)$. Thus, the de Vries power of $D$ by $(B,\prec)$ is the proximity algebra $(FN(X),\lhd)$. 

As was shown in \cite{BMMO15b}, in the special case when $D$ is a totally ordered domain, de Vries powers can be characterized {using the notion of Baer-Specker $D$-algebras. 
These algebras have a long history. We refer to \cite{BMO20e} for details.
In \cite{BMMO15b} de Vries proximities on boolean algebras were generalized to proximity relations on Specker $D$-algebras which has resulted in the category
$\PBSp_D$ of proximity Baer-Specker $D$-algebras. One of the main results of \cite{BMMO15b} establishes that this category is dually equivalent to $\KHaus$, and hence is equivalent to $\dev$, thus yielding the following commutative diagram, which lifts the diagram given above for Stone duality via boolean powers  to de Vries duality via de Vries powers.
\[
\begin{tikzcd}
\PBSp_D \arrow[rr] \arrow[dr] && \dev \arrow[ll] \arrow[dl] \\
& \KHaus \arrow[ur] \arrow[ul] &
\end{tikzcd}
\]

The equivalence between $\dev$ and $\PBSp_D$ is obtained by passing through $\KHaus$, hence is not choice-free. In this article we give a purely algebraic proof of this equivalence, which is choice-free. 
This we do by going back to the original definition of boolean powers by Foster \cite{Fos53a, Fos53b,Fos61}. Using this approach, we can see that Specker $D$-algebras, defined as  idempotent-generated torsion-free $D$-algebras, 
 are boolean powers by utilizing orthogonal decompositions of elements of these algebras  (see Section~\ref{sec:specker}). However, orthogonal decompositions are ill suited for 
  working with a proximity on a Specker $D$-algebra, and so we introduce a different decomposition for the elements in the algebra, a decreasing decomposition  that is reminiscent of Mundici's good sequences (\cite[p.~28]{Mun86}). These decreasing decompositions are the key ingredient in lifting de Vries proximities to proximities on Baer-Specker algebras in a choice-free manner. 

The paper is organized as follows. In Section~\ref{sec:specker} we use the original definition of Foster to give a choice-free proof that boolean powers of a domain $D$ are exactly the Specker $D$-algebras. Starting from Section~\ref{sec:specker over totally ordered}, we assume that $D$ is a totally ordered domain. In Section~\ref{sec:specker over totally ordered} we give a choice-free proof that there is a unique partial ordering on a Specker $D$-algebra $S$ making it a torsion-free $f$-algebra over $D$. In Section~\ref{sec:proximities on Specker} we recall the notion of a proximity on a Specker $D$-algebra and give an alternate description of a boolean power of $D$ using decreasing decompositions. In Section~\ref{sec:lifting} we use decreasing decompositions to lift a de Vries proximity from the boolean algebra $\Id(S)$ of idempotents to the Specker $D$-algebra $S$. This allows us to give a choice-free definition of a de Vries power of $D$ and prove that de Vries powers of $D$ are exactly the Baer-Specker $D$-algebras. Finally, in Section~\ref{sec:7} we give a direct choice-free proof that the category $\dev$ of de Vries algebras is equivalent to the category $\PBSp_D$ of proximity Baer-Specker $D$-algebras.

\section{Specker algebras and boolean powers}\label{sec:specker}

In this section $D$ is an arbitrary fixed integral domain. For a commutative unital $D$-algebra $S$, we let $\Id(S)$ be the boolean algebra of its idempotents.

\begin{definition}
Let $S$ be a commutative unital $D$-algebra. 
\begin{enumerate}
\item We call $S$ {\em idempotent-generated} if $S$ is generated as a $D$-algebra by $\Id(S)$.
\item We call $S$ a {\em Specker $D$-algebra} if $S$ is idempotent-generated and torsion-free as a $D$-module.
\item We denote by $\mathbf{Sp}_D$  the category of Specker $D$-algebras and unital $D$-algebra homomorphisms.
\end{enumerate}
\end{definition}

Various characterizations of Specker $D$-algebras can be found in \cite{BMMO15a}. Some of these we collect in the next theorem. Recall that the boolean power of $D$ by a boolean algebra $B$ is the $D$-algebra $C(X,D)$ of continuous functions $f:X\to D$, where $X$ is the Stone space of $B$ and $D$ is given the discrete topology.

\begin{theorem}\label{thm:Specker}
For a commutative unital $D$-algebra $S$, the following are equivalent:
\begin{enumerate}[$(1)$]
\item $S$ is a Specker $D$-algebra.
\item $S$ is isomorphic to an idempotent-generated subalgebra of a power of $D$.
\item $S$ is isomorphic to a boolean power of $D$.
\item $S$ is idempotent-generated and a free $D$-module.
\end{enumerate}
\end{theorem}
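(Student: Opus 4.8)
The plan is to prove the four conditions equivalent by a cycle in which the substance is concentrated in the two implications $(1)\Rightarrow(4)$ and $(1)\Rightarrow(3)$, with $(4)\Rightarrow(1)$, $(2)\Rightarrow(1)$, and $(3)\Rightarrow(2)$ being routine. For the routine ones: since $D$ is a domain, every power $D^I$ is torsion-free as a $D$-module and torsion-freeness passes to subalgebras, so $(2)\Rightarrow(1)$; a free module over a domain is torsion-free and idempotent-generation is part of the hypothesis, so $(4)\Rightarrow(1)$; and a boolean power $C(X,D)$ sits inside $D^X$, its idempotents being exactly the characteristic functions of clopen sets, which by compactness of the Stone space generate every finitely-valued continuous function, so $(3)\Rightarrow(2)$.

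The engine for the hard direction is an orthogonal decomposition lemma, which I would establish first and which is manifestly choice-free since it only involves finitely many idempotents at a time. Given $s\in S$, write $s$ as a $D$-polynomial in idempotents $e_1,\dots,e_k$; the finite boolean subalgebra of $\Id(S)$ generated by these has atoms $f_1,\dots,f_m$ forming a partition of unity ($f_if_j=0$ for $i\ne j$ and $\sum_i f_i=1$), and rewriting the polynomial over the atoms collapses all products to give $s=\sum_i d_i f_i$ with $d_i\in D$. Torsion-freeness then forces uniqueness of the coefficients relative to a fixed partition, since $(d_i-d_i')f_i=0$ with $f_i\ne 0$ yields $d_i=d_i'$.

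For $(1)\Rightarrow(4)$, the strategy is to upgrade these local normal forms into a global free basis. Filtering $S$ by the subalgebras $S_F$ generated by finite sets $F$ of idempotents, each $S_F$ is free on the atoms of the finite boolean algebra it determines (indeed $S_F\cong D^m$ on those atoms, with $1=\sum_i f_i$), and enlarging $F$ merely splits an atom $a$ into $a',a''$ with $a=a'+a''$, so the transition maps are split monomorphisms of free modules with free cokernel. The task is to assemble this filtered system into a single free $D$-module without invoking a well-ordering, and the orthogonal decompositions together with their uniqueness are exactly what let one carry a compatible basis through the refinements. This is the step I expect to be the main obstacle, precisely because the naive route (extend a maximal $D$-independent family of idempotents by Zorn) is not choice-free; the real work is to replace it by a direct construction driven by the decomposition lemma.

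Finally, for $(1)\Rightarrow(3)$, I would identify $S$ with the boolean power of $D$ by $B=\Id(S)$. The fastest route sends $s\in S$ to its evaluation on the Stone space $X$ of $B$, using that $D$-algebra homomorphisms $S\to D$ correspond to ultrafilters of $B$ (a homomorphism restricts to a boolean map $B\to\{0,1\}$, and conversely an ultrafilter evaluates $\sum_i d_i f_i$ at the coefficient of its unique member atom); the orthogonal decomposition then shows this map is a $D$-algebra isomorphism onto $C(X,D)$. The one genuinely non-constructive ingredient here is the passage to points of $X$, so to stay choice-free I would instead match $S$ against Foster's algebraic boolean power, namely the $D$-algebra of partition-of-unity expressions $\sum_i d_i b_i$ over $B$ modulo the evident relations, for which the decomposition lemma supplies the isomorphism directly and which agrees with $C(X,D)$ whenever the points are available. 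Threading this choice-free identification in tandem with the freeness above is the delicate part of the argument.
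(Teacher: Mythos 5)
Your cycle is set up sensibly and the routine implications ($(2)\Rightarrow(1)$, $(4)\Rightarrow(1)$, $(3)\Rightarrow(2)$) are fine, but there is a genuine gap at $(1)\Rightarrow(4)$: you never actually produce a $D$-basis of $S$. What you give is the directed system of finite subalgebras $S_F$, each free on the atoms of the finite boolean subalgebra generated by $F$, with split monic transition maps having free cokernels; you then say the remaining task is to ``assemble'' these into a global basis without a well-ordering, and you yourself flag this as the main obstacle. But that assembly \emph{is} the theorem. Split transitions over a directed poset do not hand you a basis of the union: the complements are not canonical, the index poset is directed rather than a chain, and coherently choosing complements (equivalently, compatible bases) across the whole system is exactly the point where the known proof --- Bergman's argument, adapted to Specker algebras in \cite{BMMO15a} --- well-orders $\Id(S)$ and runs a transfinite induction along the resulting chain. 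Since you prove $(4)\Rightarrow(1)$ but not $(1)\Rightarrow(4)$, condition (4) is left dangling and the four-way equivalence is not established.

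For perspective on how this compares with the paper: the paper does not prove Theorem~\ref{thm:Specker} at all; it quotes it from \cite{BMMO15a} and explicitly remarks that the proof there is \emph{not} choice-free, since the boolean power in (3) is defined via the Stone space of $B$ (and, in \cite{BMMO15a}, freeness is obtained by a well-ordering argument). The paper's own choice-free contribution is deliberately narrower: using Foster's definition of boolean power, it proves that Specker $D$-algebras are exactly the boolean powers (Theorem~\ref{perp is specker}, Theorem~\ref{thm: specker}, Corollary~\ref{rem:Specker=boolean power}) via orthogonal decompositions --- essentially your $(1)\Rightarrow(3)$ route, including your correct observation that the J\'onsson form $C(X,D)$ is only recovered ``when points are available.'' So your instinct about where choice becomes unavoidable is right, but identifying the obstacle is not overcoming it: either accept choice for (4) --- well-order $\Id(S)$ and induct, as in \cite{BMMO15a} --- or, if you want a choice-free statement, drop (4) and the Stone-space form of (3) and prove only the Foster-power equivalence, which is precisely what the paper does.
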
 

The definition of a boolean power given before the theorem is due to J\'onsson (see \cite[p.~5]{BN80}). Because the definition involves the Stone space of $B$, it is not choice-free. Since this definition was used in \cite{BMMO15a}, the proof of Theorem~\ref{thm:Specker} is also not choice-free. To avoid this reliance on the axiom of choice, we revert to the  original definition of a boolean power given by Foster (see, e.g., \cite[p.~31]{Fos61}):

\begin{definition}\label{def:boolean power}
The (bounded) {\em boolean power of $D$ by $B$} is the $D$-algebra $D[B]^*$ of finitely-valued functions $f:D \rightarrow B$ such that $f(a) \wedge f(b) = 0$ for all $a \ne b$ in $D$ and $\bigvee {\func{Im}} f = 1$. The algebra operations on $D[B]^*$ are defined as follows, where $a,b\in D$ and $f,g\in D[B]^*$:
\begin{itemize}
\item $(f + g)(a) = \displaystyle{\bigvee} \{{f}(b) \wedge {g}(c): {b+c=a}\}$.
\item ${(fg)}(a) = \displaystyle{\bigvee} \{{f}(b) \wedge {g}(c):{bc=a}\}$.
\item ${(bf)}(a) = \displaystyle{\bigvee} \{f(c):bc=a\}$.
\end{itemize}
\end{definition}

Using Definition~\ref{def:boolean power}, a choice-free proof that $S$ is a Specker $D$-algebra iff $S$ is isomorphic to a boolean power was outlined in \cite[Rem.~2.9]{BMMO15a}. Since this observation is important to our point of view in the present article, we give the details below. For this we  recall orthogonal decompositions of elements of Specker $D$-algebras. 

\begin{definition}
Let $S$ be a Specker $D$-algebra and $B=\Id(S)$. An \emph{orthogonal decomposition} of $s \in S$ is a representation $s = \sum_{i=0}^n a_i e_i$ with $a_i \in D$ (not necessarily distinct) and $e_i \in B$ are pairwise orthogonal (that is, $e_i \wedge e_j = 0$ for each $i \ne j$). If, in addition, $e_0 \vee \cdots \vee e_n = 1$, we call this a \emph{full orthogonal decomposition}.  
\end{definition}

By \cite[Lem.~2.1]{BMMO15a}, each $s \in S$ has a unique full orthogonal decomposition with distinct coefficients.
To connect orthogonal decompositions with the boolean power of $D$ by $B$, let $s=\sum_{i=0}^n a_ie_i$ be a full orthogonal decomposition of $s\in S$ with the $a_i$ distinct. Define $s^\perp:D \rightarrow B$ by
\[
s^\perp(a) =
\left\{\begin{array}{ll}
e_i & \textrm{if } a=a_i \textrm{ for some } i, \\
0 & \textrm{otherwise.} \\
\end{array}\right.
\]
It is straightforward to see that $s^\perp\in D[B]^*$ and $s = \sum_{a\in D} a s^\perp(a)$.
Conversely, if $f \in D[B]^*$, then $s = \sum_{a \in D} af(a)$ is a full orthogonal decomposition of $s$ with distinct coefficients such that $s^\perp = f$. We show that this correspondence is an isomorphism.

\begin{theorem} \label{perp is specker}
Let $S$ be a Specker $D$-algebra and $B=\Id(S)$. Then the map $(-)^\perp:S\to D[B]^*$ is a $D$-algebra isomorphism. 
Moreover, the restriction of $(-)^\perp$ to $B$ is a boolean isomorphism from $B$ to $\Id(D[B]^*)$. 
\end{theorem}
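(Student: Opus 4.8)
The plan is to establish that $(-)^\perp$ is a bijection, then verify that it preserves the $D$-algebra operations, after which the ``moreover'' clause follows formally. Well-definedness is immediate from the uniqueness of the full orthogonal decomposition with distinct coefficients (\cite[Lem.~2.1]{BMMO15a}), and $s^\perp\in D[B]^*$ as already noted. For bijectivity I would use the explicit inverse $f\mapsto\sum_{a\in D}af(a)$: the discussion preceding the statement shows that $s=\sum_{a\in D}as^\perp(a)$ for each $s\in S$ and that $(-)^\perp$ returns $f$ when applied to $\sum_{a\in D}af(a)$, so the two maps are mutually inverse.

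The core of the argument is the homomorphism property. Fix $s,t\in S$ with full orthogonal decompositions $s=\sum_i a_ie_i$ and $t=\sum_j b_jf_j$ having distinct coefficients. Since $\bigvee_i e_i=\bigvee_j f_j=1$, I would refine to the common pairwise orthogonal family $\{e_if_j\}$, whose join is $1$, and rewrite $s=\sum_{i,j}a_ie_if_j$ and $t=\sum_{i,j}b_je_if_j$, so that $s+t=\sum_{i,j}(a_i+b_j)e_if_j$ and $st=\sum_{i,j}(a_ib_j)e_if_j$. These decompositions need not have distinct coefficients, so to read off $(s+t)^\perp$ and $(st)^\perp$ I would regroup by coefficient value: for $\ast\in\{+,\cdot\}$ and each $a\in D$, collecting the terms $e_if_j$ with $a_i\ast b_j=a$ and using that a sum of pairwise orthogonal idempotents equals their join, the idempotent attached to $a$ in the distinct-coefficient decomposition is $\bigvee\{e_if_j:a_i\ast b_j=a\}$, which by uniqueness equals $(s\ast t)^\perp(a)$. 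On the other hand, since $s^\perp(b)\wedge t^\perp(c)$ equals $e_if_j$ when $b=a_i$ and $c=b_j$, and $0$ otherwise, the definition of the operations on $D[B]^*$ gives $(s^\perp\ast t^\perp)(a)=\bigvee\{e_if_j:a_i\ast b_j=a\}$ as well. Hence $(s+t)^\perp=s^\perp+t^\perp$ and $(st)^\perp=s^\perp t^\perp$.

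For the scalar action, fix $d\in D$ and the decomposition $ds=\sum_i(da_i)e_i$. When $d\neq0$ the $da_i$ remain distinct because $D$ has no zero divisors, and $dc=a$ has at most one solution, so both $(ds)^\perp(a)$ and $(d\cdot s^\perp)(a)$ equal $s^\perp(c)$ for the unique $c$ with $dc=a$ (and $0$ if none exists). When $d=0$ both sides reduce to the function sending $0$ to $1$ and everything else to $0$, which also equals $1^\perp$ computed from $1=1\cdot1$; this identity likewise shows $(-)^\perp$ is unital. Therefore $(-)^\perp$ is a $D$-algebra isomorphism.

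Finally, I would deduce the ``moreover'' clause formally. A $D$-algebra isomorphism restricts to a bijection between the sets of idempotents, so $(-)^\perp$ carries $B=\Id(S)$ bijectively onto $\Id(D[B]^*)$; and since the boolean operations on the idempotents of a commutative ring are given by $e\wedge f=ef$, $e\vee f=e+f-ef$, and $\neg e=1-e$, all of which $(-)^\perp$ preserves, this restriction is a boolean isomorphism. I expect the only genuine obstacle to be the regrouping step in the second paragraph, where two decompositions must be refined to the common family $\{e_if_j\}$ and then recombined by coefficient value with an appeal to uniqueness; the scalar computation is routine once the domain hypothesis is invoked.
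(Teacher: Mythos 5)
Your proposal is correct and follows essentially the same route as the paper's proof: bijectivity via the explicit inverse $f\mapsto\sum_{a\in D}af(a)$, refinement of the two decompositions to the common orthogonal family $\{e_i\wedge f_j\}$, regrouping by coefficient value (with uniqueness of distinct-coefficient full orthogonal decompositions) to identify $(s\ast t)^\perp(a)$ with $\bigvee\{e_i\wedge f_j : a_i\ast b_j=a\}$, and the formal deduction that a ring isomorphism restricts to a boolean isomorphism on idempotents. Your treatment of the scalar action and the $d=0$ case is merely a more explicit version of the step the paper dismisses as ``similar.''
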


\begin{proof}
We just saw that $(-)^\perp$ is a bijection. It remains to show that $(-)^\perp$ is a $D$-algebra homomorphism. Let $s,t \in S$ and let $s = \sum_i a_i e_i$ and $t = \sum_j b_j f_j$ be full orthogonal decompositions. Then $s = \sum_{i,j} a_i (e_i \wedge f_j)$ and $t = \sum_{i,j}b_j (e_i \wedge f_j)$. Therefore, $s,t$ have full orthogonal decompositions with the same set of idempotents (but not necessarily distinct coefficients). Thus, without loss of generality we may assume that $s$ and $t$ have orthogonal decompositions $s = \sum_i a_i e_i$ and $t = \sum_i b_i e_i$. Then $s^\perp(a) = \bigvee\{ e_i : a = a_i\}$, and a similar description holds for $t^\perp$. Applying the definition of $s^\perp+t^\perp$ and the fact that the $e_i$ are pairwise orthogonal, we obtain
\begin{align*}
(s^\perp+t^\perp)(a) &= \bigvee \left\{s^\perp(b) \wedge t^\perp(c): b+c=a\right\} \\
&= \bigvee \left\{ \bigvee \{e_i: b=a_i\} \wedge \bigvee \{e_j: c=b_j\}: b+c=a \right\} \\
&= \bigvee \left\{ \bigvee \{ e_i \wedge e_j : b=a_i \mbox{ and } c = b_j \} : b+c=a \right\} \\
&= \bigvee \left\{ \bigvee \{ e_i : b=a_i \mbox{ and } c=b_i \} : b+c=a\right\} \\
&= \bigvee \left\{ e_i : a_i+b_i = a\right\} \\
&= (s+t)^\perp(a),
\end{align*}
where the last equality holds since $s+t = \sum_i (a_i + b_i)e_i$. Therefore, $(s+t)^\perp = s^\perp + t^\perp$. The proofs that $(st)^\perp = s^\perp\: t^\perp$ and $(bs)^\perp = bs^\perp$ for $b \in D$ are similar.
Thus, $(-)^\perp$ is a $D$-algebra isomorphism.
Finally, since $(-)^\perp$ is a ring isomorphism from $S$ to $D[B]^*$, it restricts to a boolean isomorphism between $B = \Id(S)$ and $\Id(D[B]^*)$.
\end{proof}

It follows from Theorem~\ref{perp is specker} that if $S$ is a Specker $D$-algebra, then $S$ is isomorphic to the boolean power of $D$ by $\Id(S)$. To prove that every boolean power of $D$ is a Specker $D$-algebra, we require the following construction that has its roots in the work of Bergman \cite{Ber72} and Rota \cite{Rot73}.

\begin{definition} \cite[Def.~2.4]{BMMO15a} 
For a boolean algebra $B$, let $D[B]$ be the quotient ring of the polynomial ring $D[\{x_e : e\in B\}]$ over $D$ in variables indexed by the elements of $B$ modulo the ideal $I_B$ generated by the following elements, as $e,f$ range over $B$: $$x_{e\wedge f} - x_e x_f, \ \ x_{e\vee f} - (x_e + x_f - x_e x_f), \ \ x_{\lnot e} - (1-x_e), \ \ x_0.$$
\end{definition}

The following result follows from the definition of $D[B]$ and \cite[Lem.~3.2(4)]{BMMO15a}.

\begin{theorem} \label{thm: specker}
$D[B]$ is a Specker $D$-algebra and $B$ is isomorphic to $\Id(D[B])$. 
\end{theorem}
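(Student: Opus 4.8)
The plan is to check the three assertions separately: that $D[B]$ is idempotent-generated, that it is torsion-free as a $D$-module, and that $\eta\colon B\to D[B]$, $e\mapsto x_e$, is a boolean isomorphism onto $\Id(D[B])$. The defining relations are arranged so that $\eta$ transports the boolean structure of $B$ onto the idempotents of $D[B]$, so once the nondegeneracy of the construction is secured, everything else is bookkeeping.

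First I would extract the formal consequences of the relations. From $x_{e\wedge f}-x_ex_f\in I_B$ with $f=e$ we get $x_e^2=x_e$, so each generator $x_e$ is idempotent and $D[B]$, being generated by the $x_e$, is idempotent-generated. The generator $x_0$ gives $x_0=0$, and $x_{\lnot 0}-(1-x_0)$ gives $x_1=1$. Reading the remaining relations as $x_{e\wedge f}=x_ex_f$, $x_{e\vee f}=x_e+x_f-x_ex_f$, and $x_{\lnot e}=1-x_e$ shows that $\eta$ preserves meets, joins, complements, and $0$; hence $\eta$ is a boolean homomorphism into $\Id(D[B])$.

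The substance is the torsion-freeness together with the bijectivity of $\eta$, and here I would reduce to finite subalgebras. Since every boolean algebra is the directed union of its finite subalgebras and $D[-]$ is given by finitary generators and relations, one has $D[B]=\varinjlim_{B_0} D[B_0]$ over the finite subalgebras $B_0\subseteq B$. For finite $B_0$ with atoms $c_1,\dots,c_m$, the relations force $x_{c_i}x_{c_j}=x_{c_i\wedge c_j}=x_0=0$ for $i\ne j$ and $\sum_i x_{c_i}=x_1=1$, and \cite[Lem.~3.2(4)]{BMMO15a} supplies the essential input that $x_{c_1},\dots,x_{c_m}$ form a $D$-basis of $D[B_0]$, so $D[B_0]\cong D^m$ is free. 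Torsion-freeness of $D[B]$ then follows because a directed colimit of torsion-free modules is torsion-free. For injectivity of $\eta$, suppose $x_b=x_c$; passing to the subalgebra generated by $b,c$ and expanding both sides in the atom-idempotents gives $x_{b\wedge\lnot c}=x_{\lnot b\wedge c}$, and since distinct nonzero atoms have distinct independent basis idempotents while $x_0=0$, this forces $b\wedge\lnot c=\lnot b\wedge c=0$, i.e.\ $b=c$. For surjectivity, an arbitrary element of $D[B]$ lies in some $D[B_0]$ and so, using $x_{b}=\sum_{c_i\le b}x_{c_i}$, can be written as $\sum_i d_i x_{c_i}$ with the $x_{c_i}$ the atom-idempotents of $B_0$; if such an element $s$ is idempotent then $s^2=\sum_i d_i^2 x_{c_i}=s$, so independence of the $x_{c_i}$ gives $d_i^2=d_i$, whence $d_i\in\{0,1\}$ because $D$ is a domain, and therefore $s=\sum_{d_i=1}x_{c_i}=x_{\bigvee_{d_i=1}c_i}$ lies in the image of $\eta$.

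The main obstacle is exactly the quoted input: proving, choice-free, that the atom-idempotents $x_{c_1},\dots,x_{c_m}$ are $D$-independent, equivalently that $D[B_0]\cong D^m$, so that the $x_e$ do not collapse. Existence of the orthogonal expansion $x_b=\sum_{c_i\le b}x_{c_i}$ is immediate from inclusion-exclusion, but the independence underlying both torsion-freeness and the relation $x_d\ne 0$ for $d\ne 0$ is the genuinely nonformal point. It is tempting to prove it by embedding $D[B]$ into a power of $D$ via homomorphisms $D[B]\to D$, but such homomorphisms correspond to ultrafilters of $B$ and their existence is not choice-free; this is precisely why I would route the argument through the finite subalgebras and the explicit basis of \cite[Lem.~3.2(4)]{BMMO15a}, where no choice principle intervenes.
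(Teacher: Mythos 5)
Your proof is correct and matches the paper's route: the paper's entire proof of this theorem is the one-line remark that it follows from the definition of $D[B]$ together with \cite[Lem.~3.2(4)]{BMMO15a}, which is exactly the independence input you isolate as the essential nonformal point. Your finite-subalgebra/directed-colimit scaffolding, the injectivity/surjectivity checks for $e \mapsto x_e$, and the use of $D$ being a domain to pin down the idempotents are precisely the bookkeeping the paper delegates to that citation.
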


We thus are ready for a choice-free proof that Specker $D$-algebras are boolean powers of~$D$.

\begin{corollary} \label{rem:Specker=boolean power} 
Boolean powers of a domain $D$ are, up to isomorphism in the category of $D$-algebras, precisely the Specker $D$-algebras.
\end{corollary}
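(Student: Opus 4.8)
The plan is to establish both inclusions between the class of boolean powers of $D$ and the class of Specker $D$-algebras by combining Theorem~\ref{perp is specker} with Theorem~\ref{thm: specker}.

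First I would handle the direction that every Specker $D$-algebra is, up to isomorphism, a boolean power. This is immediate from what we have already proved: given a Specker $D$-algebra $S$, set $B=\Id(S)$ and invoke Theorem~\ref{perp is specker}, which exhibits $(-)^\perp\colon S\to D[B]^*$ as a $D$-algebra isomorphism. Thus $S$ is isomorphic to the boolean power of $D$ by its own boolean algebra of idempotents, and no further work is needed for this half.

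For the reverse direction I would show that every boolean power $D[B]^*$ is a Specker $D$-algebra, routing through the Bergman--Rota algebra $D[B]$. By Theorem~\ref{thm: specker}, $D[B]$ is a Specker $D$-algebra and there is a boolean isomorphism $B\cong\Id(D[B])$. Applying Theorem~\ref{perp is specker} to the Specker algebra $D[B]$ yields a $D$-algebra isomorphism $D[B]\cong D[\Id(D[B])]^*$. It then remains to transport this back along the boolean isomorphism $B\cong\Id(D[B])$ so as to conclude $D[B]^*\cong D[B]$, whence $D[B]^*$ inherits the Specker property.

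The only point requiring a small verification --- and the closest thing to an obstacle --- is that the boolean power construction is invariant under boolean isomorphisms: if $\varphi\colon B\to B'$ is a boolean isomorphism, then post-composition $f\mapsto\varphi\circ f$ is a $D$-algebra isomorphism $D[B]^*\to D[B']^*$. This is routine, since every operation in Definition~\ref{def:boolean power} is expressed purely in terms of the boolean operations $\wedge$ and $\vee$, which $\varphi$ preserves; nevertheless it must be stated explicitly to justify replacing $\Id(D[B])$ by $B$ in the final step. With this invariance in hand, both inclusions are established and the corollary follows. I would also remark that, since all three ingredients (Theorem~\ref{perp is specker}, Theorem~\ref{thm: specker}, and the invariance above) are choice-free, the entire argument avoids any appeal to the axiom of choice, as desired.
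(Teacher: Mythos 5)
Your proof is correct and follows essentially the same route as the paper: Theorem~\ref{perp is specker} gives the forward direction, and Theorem~\ref{thm: specker} combined with Theorem~\ref{perp is specker} gives the converse. The only difference is that you explicitly verify the invariance of the boolean power construction under boolean isomorphisms (transporting along $B\cong\Id(D[B])$), a small step the paper's proof uses implicitly without comment.
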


\begin{proof}
If $S$ is a Specker $D$-algebra, then Theorem~\ref{perp is specker} yields that $S$ is isomorphic to the boolean power $D[\Id(S)]^*$. Conversely, 
by Theorem~\ref{thm: specker}, $D[B]$ is a Specker $D$-algebra and $B$ is isomorphic to $\Id(D[B])$. Thus, by Theorem~\ref{perp is specker}, 
the boolean power of $D$ by $B$ is isomorphic to the Specker $D$-algebra $D[B]$.
\end{proof}

\begin{remark} \label{rem: idempotents of D[B]*}
Let $B \in \ba$ and let $i_B : B \to \Id(D[B])$ be the boolean isomorphism of Theorem~\ref{thm: specker}.
Composing $i_B$ with the boolean isomorphism $(-)^\perp : \Id(D[B]) \to \Id(D[B]^*)$
of Theorem~\ref{perp is specker} yields a boolean isomorphism which sends $e \in B$ to 
$e^\perp \in \Id(D[B]^*)$, given by
\[
e^\perp(a) = \left\{ \begin{array}{ll} e & \textrm{if } a = 1, \\ \lnot e & \textrm{if } a = 0, \\ 0 & \textrm{if } a \ne 0,1. \end{array} \right.
\]
\end{remark}

\begin{remark}
More generally, Specker algebras can be defined over an arbitrary commutative ring $R$ with~1, but the definition is more subtle when zero divisors are present. This was done in \cite{BMMO15a}, where the notion of a faithful generating algebra of idempotents was introduced. If we replace $\Id(S)$ by such a generating algebra, then the proofs of Theorem~\ref{perp is specker} and Corollary~\ref{rem:Specker=boolean power} generalize, thus yielding a choice-free proof of \cite[Thm.~2.7]{BMMO15a} that boolean powers of $R$ are precisely the Specker $R$-algebras. The reason we restrict to domains will become clear when we introduce proximities on Specker algebras; see Remark~\ref{rem: prox def}(4).
\end{remark}

\section{Specker algebras over totally ordered domains}\label{sec:specker over totally ordered} 

From now on we assume that $D$ is a totally ordered domain. It was shown in \cite[Thm.~5.1]{BMMO15a} that there is a unique ordering on a Specker $D$-algebra $S$ that makes $S$ into a torsion-free $f$-algebra over $D$. But the proof is not choice-free. In Theorem~\ref{order prop} we give a choice-free proof of this result, and also show that the isomorphism of Theorem~\ref{perp is specker} is an order isomorphism.

We start by recalling some basic definitions of ordered rings (see, e.g., \cite[Ch.~XVII.5]{Bir79}). A ring $R$ with a partial ordering $\le$ is an \emph{$\ell$-ring} (lattice-ordered ring) if 
\begin{itemize}
\item[(i)] $(R,\le)$ is a lattice;
\item[(ii)] $s \le t$ implies $s + r \le t + r$ for each $r$;
\item[(iii)] $0 \le s, t$ implies $0 \le st$. 
\end{itemize}
An $\ell$-ring $R$ is an \emph{$f$-ring} if for each $r, s, t \in R$ with $s \wedge t = 0$ and $r \ge 0$, we have $rs \wedge t = 0$.

\begin{definition}
Let $(S,\le)$ be a partially ordered $D$-algebra. 
\begin{enumerate}
\item We call $S$ an \emph{$\ell$-algebra over $D$} if $S$ is both an $\ell$-ring and a $D$-algebra such that whenever $0\le s \in S$ and $0 \le a \in D$, then $as \ge 0$. 
\item We call $S$ an \emph{$f$-algebra over $D$} if $S$ is both an $\ell$-algebra over $D$ and an $f$-ring. 
\end{enumerate}
\end{definition}

\begin{theorem} \label{order prop}
Let $S$ be a Specker $D$-algebra. Then there is a unique partial ordering $\le$ on $S$ for which $(S,\le)$ is an $f$-algebra over $D$, given by $s \le t$ if $t-s$ has an  orthogonal decomposition whose coefficients are nonnegative. Moreover, $\le$ restricts to the usual order on $\Id(S)$.
\end{theorem}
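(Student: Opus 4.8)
The plan is to work throughout with the positive cone $P=\{s\in S : s\ge 0\}$ attached to the proposed order, i.e. the set of elements admitting an orthogonal decomposition with nonnegative coefficients, and to reduce every assertion to a coordinatewise computation in the totally ordered domain $D$. Two tools make this possible. First, a key lemma: for any orthogonal decomposition $w=\sum_i \gamma_i e_i$ with all $e_i\ne 0$, one has $w\in P$ if and only if every $\gamma_i\ge 0$. Indeed, reducing such a decomposition to the canonical full decomposition of \cite[Lem.~2.1]{BMMO15a} only merges equal coefficients and possibly adjoins the coefficient $0$ on $\lnot\bigvee_i e_i$, so each $\gamma_i$ is among the canonical coefficients; by uniqueness these are determined by $w$ alone, whence $w\in P$ forces all of them, in particular all the $\gamma_i$, to be nonnegative, the converse being immediate from the definition. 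Second, common refinement: given finitely many elements with canonical decompositions built from idempotent families $\{e_i\},\{f_j\},\dots$, the nonzero meets $e_i\wedge f_j\wedge\cdots$ form a single orthogonal family $\{p_k\}$ with $\bigvee_k p_k=1$ over which the given elements are simultaneously written, say $s=\sum_k\sigma_k p_k$ and $t=\sum_k\tau_k p_k$. Both constructions are manifestly finite, hence choice-free.

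With these in hand the $f$-algebra axioms become coordinatewise facts in $D$. Reflexivity is clear, and antisymmetry follows because the canonical decomposition of $-u$ is $\sum_i(-a_i)e_i$, so $u,-u\in P$ forces every $a_i=0$. For two elements over a common refinement I would compute $s+t=\sum_k(\sigma_k+\tau_k)p_k$ and $st=\sum_k\sigma_k\tau_k p_k$, so that $P$ is closed under addition (giving transitivity) and under multiplication; I would then set $s\vee t=\sum_k\max(\sigma_k,\tau_k)p_k$ and $s\wedge t=\sum_k\min(\sigma_k,\tau_k)p_k$, and the key lemma shows at once that these are the least upper and greatest lower bounds. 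Compatibility with translation is immediate from $(t+r)-(s+r)=t-s$, compatibility with positive scalars from $a\sum_k\sigma_k p_k=\sum_k(a\sigma_k)p_k$, and the $f$-ring law from the remark that $s\wedge t=0$ gives $0\le s,t$, hence $\sigma_k,\tau_k\ge 0$ with $\min(\sigma_k,\tau_k)=0$ for each $k$, so that $\rho_k\ge 0$ yields $\min(\rho_k\sigma_k,\tau_k)=0$ and thus $rs\wedge t=0$.

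For uniqueness, let $\le'$ be any order making $S$ an $f$-algebra over $D$. Since squares are nonnegative in every $f$-ring, each idempotent $e=e^2$ satisfies $e\ge' 0$; combined with the $\ell$-algebra law this gives $\sum_k\sigma_k p_k\ge' 0$ whenever all $\sigma_k\ge 0$, so $P\subseteq P'$. Conversely, if $s=\sum_k\sigma_k p_k\ge' 0$ in canonical form but some $\sigma_{k_0}<0$, then multiplying by $p_{k_0}\ge' 0$ gives $\sigma_{k_0}p_{k_0}\ge' 0$, while $-\sigma_{k_0}>0$ gives $-\sigma_{k_0}p_{k_0}\ge' 0$; antisymmetry forces $\sigma_{k_0}p_{k_0}=0$, contradicting torsion-freeness as $\sigma_{k_0}\ne 0$ and $p_{k_0}\ne 0$. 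Hence $P'\subseteq P$ and the two orders coincide. Finally, for $B=\Id(S)$, writing $e,f$ over the partition $e\wedge f,\ e\wedge\lnot f,\ \lnot e\wedge f,\ \lnot e\wedge\lnot f$ gives $f-e=-(e\wedge\lnot f)+(\lnot e\wedge f)$, which by the key lemma lies in $P$ precisely when $e\wedge\lnot f=0$, i.e. exactly when $e\le f$ in $B$.

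I expect the main obstacle to be the key lemma, since once it is available everything reduces to arithmetic in $D$: it is what licenses reading off the signs of coefficients from an \emph{arbitrary} orthogonal decomposition, and its proof is precisely where the uniqueness of the canonical decomposition must be invoked. Some care is also needed in the uniqueness argument to rely only on the abstract $f$-ring properties of $\le'$, namely nonnegativity of squares together with torsion-freeness, rather than on the explicit description of $P$.
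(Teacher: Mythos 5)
Your proposal is correct, and while it builds the same positive cone $P$ and reduces everything to coordinatewise arithmetic over common refinements just as the paper does, its central device is genuinely different. The paper never isolates your key lemma. Instead it proves $P\cap -P=\{0\}$ directly (refine two nonnegative decompositions, multiply by $e_i\wedge f_j$, use torsion-freeness), and at the crucial greatest-lower-bound step it takes an arbitrary lower bound $q=\sum_i d_ie_i$, multiplies by each idempotent to get $(a_i-d_i)e_i\in P$, and then combines $P\cap -P=\{0\}$ with torsion-freeness to read off $d_i\le a_i$ in $D$; the same mechanism reappears in your $P'\subseteq P$ half of uniqueness. Your key lemma---that an element of $P$ has nonnegative coefficients in \emph{every} orthogonal decomposition with nonzero idempotents, deduced from the uniqueness of the canonical full orthogonal decomposition of \cite[Lem.~2.1]{BMMO15a}---concentrates that torsion-freeness/uniqueness input into a single statement, after which antisymmetry, the max/min formulas for $\vee$ and $\wedge$, the $f$-ring law, and the restriction to $\Id(S)$ really do become one-line computations in $D$ (the only compressed step is that an arbitrary bound $q$ must first be brought into a common refinement with $s,t$ before the lemma applies to $s-q$ and $t-q$, which your refinement tool covers). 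Your version is also more self-contained at exactly the three points where the paper leans on the literature: the paper invokes Fuchs for the passage from cone to partial order, cites \cite[Lem.~4.9(2)]{BMMO15b} for the statement about $\Id(S)$, and disposes of uniqueness as ``a direct adaptation'' of \cite[Thm.~5.1]{BMMO15a}, whereas you prove all three outright. Your uniqueness argument is sound and choice-free; note that the inclusion $P\subseteq P'$, via positivity of squares, is precisely where the $f$-ring hypothesis (rather than the bare $\ell$-ring axioms) enters, and that fact has a standard identity-based proof (via $a^{+}a^{-}=0$) requiring no choice.
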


\begin{proof} Let $P$ be the set of elements in $S$ that have an orthogonal decomposition whose coefficients are  nonnegative. We prove that $P \cap -P = \{0\}$ and $P$ is closed under addition, multiplication, and multiplication by positive scalars.
Let $s, t \in P$ and let $s = \sum_i a_i e_i$ and $t = \sum_j b_j f_j$ be orthogonal decompositions with $0 \le a_i, b_j$ for each $i, j$. As in the proof of Theorem~\ref{perp is specker}, we may write $s = \sum_{i,j} a_i (e_i \wedge f_j)$ and $t = \sum_{i,j}b_j (e_i \wedge f_j)$. Therefore, $s+t = \sum_{i,j} (a_i + b_j)(e_i \wedge f_j)$ and  $st = \sum_{i,j} a_ib_j (e_i \wedge f_j)$, so $s+t, st \in P$. Moreover, it is clear that $as \in P$ for each $0 \le a \in D$. 
To see that $P \cap -P = \{0\}$, suppose that $s = \sum_i a_i e_i = \sum_j -b_j f_j$ are orthogonal decompositions with each $a_i, b_j \ge 0$. Then $s = \sum_{i,j} a_i (e_i \wedge f_j) = \sum_{i,j} -b_j (e_i\wedge f_j)$, so $0 = \sum_{i,j}(a_i + b_j)(e_i \wedge f_j)$. Multiplying by $e_i \wedge f_j$ yields $(a_i + b_j)(e_i \wedge f_j) = 0$. Since $S$ is a torsion free $D$-module, $a_i + b_j = 0$ or $e_i \wedge f_j = 0$. If $e_i \wedge f_j = 0$, then $a_i (e_i \wedge f_j)=0$. Otherwise $a_i = 0 = b_j$ since both are nonnegative. In either case, $a_i (e_i \wedge f_j) = 0$ for each $i,j$, and so $s = 0$.
Thus, if we set $s \le t$ whenever $t-s \in P$, then $\le$ is a partial ordering on $S$ and $(S,\le)$ satisfies conditions (ii) and (iii) of the definition of an $\ell$-ring (see \cite[Thm.~VI.1.1]{Fuc63}).

To see that $(S,\le)$ also satisfies (i), let $s,t \in S$ and let $s = \sum_i a_i e_i$ and $t = \sum_i b_i e_i$ be orthogonal decompositions of $s$ and $t$ with the same set of idempotents. The join and meet of $s, t$ exist and
are given by:

\begin{claim}  \label{claim: max and min}
$s \vee t = \sum_i \max(a_i, b_i) e_i$ and $s \wedge t = \sum_i \min(a_i, b_i) e_i$. 
\end{claim}

\begin{proof}[Proof of Claim:]
The proofs of the two parts of the claim are similar, so we only prove the second. 
Set $r = \sum_i \min(a_i, b_i) e_i$. The definition of $P$ shows that $r \le s, t$. Next, let $q \in S$ be a lower bound of $s, t$. 
By refining the decompositions and eliminating zero idempotents if necessary,
we may assume that $q = \sum_i d_i e_i$ for some $d_i \in D$ and that all $e_i \ne 0$. Since $q \le s$, the $e_i$ are pairwise orthogonal, and $e_i \ge 0$, we have that $d_i e_i = qe_i \le se_i = a_ie_i$, so $(a_i - d_i)e_i \in P$. If $a_i < d_i $, then $(d_i - a_i)e_i \in P$. 
Because $P \cap -P = \{0\}$, this forces $(a_i - d_i)e_i = 0$, so $a_i = d_i$ since $S$ is torsion-free over $D$. This contradiction shows that $d_i \le a_i$. Similarly, $d_i \le b_i$, so $d_i \le \min(a_i, b_i)$. Therefore, $q \le r$. Thus, $r$ is the greatest lower bound of $s, t$, and so $s \wedge t$ exists in $S$ and is equal to $\sum_i \min(a_i, b_i) e_i$.
\end{proof}

Consequently, $S$ is an $\ell$-ring. 
That $0\le s \in S$ and $0 \le a \in D$ imply $as \ge 0$ is easy to see. Thus, $S$ is an $\ell$-algebra over $D$. 
To see that $S$ is an $f$-algebra, let $s\wedge t=0$ and $r\in S$ with $r\ge 0$. As above, $s$, $t$, and $r$ have orthogonal decompositions $s = \sum_i a_i e_i$, $t = \sum_i b_i e_i$, and $r = \sum_i c_i e_i$ with the same set of idempotents and $0 \le a_i, b_i, c_i$, and we may assume without loss of generality that each $e_i \ne 0$. By the claim, $s\wedge t=\sum_i \min(a_i, b_i) e_i$. Since $s \wedge t = 0$, for each $i$, either $a_i = 0$ or $b_i = 0$. Because $sr\wedge t=\sum_i \min(a_ic_i,b_i) e_i$, we see that $sr \wedge t = 0$. Consequently, $S$ is an $f$-algebra over $D$. This in particular implies that the order on $S$ restricts to the usual order on $\Id(S)$ (see \cite[Lem~4.9(2)]{BMMO15b}).
Finally, the proof of uniqueness of $\le$ is a direct adaptation of that given in \cite[Thm.~5.1]{BMMO15a}. 
\end{proof}

It was proved in \cite[Cor.~5.3]{BMMO15a} that each unital $D$-algebra homomorphism between Specker $D$-algebras is an $\ell$-algebra homomorphism. The proof used \cite[Thm.~5.1]{BMMO15a} and hence was not choice-free. By using the original argument from \cite{BMMO15a} but substituting the choice-free  Theorem~\ref{order prop} for that of choice-dependent Theorem 5.1 from \cite{BMMO15a}, we therefore obtain 
 a choice-free proof of this result.

\begin{theorem} \label{thm: morphisms are l-algebra morphisms}
Each unital $D$-algebra homomorphism between Specker $D$-algebras is an $\ell$-algebra homomorphism.
\end{theorem}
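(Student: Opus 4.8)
The plan is to show that any unital $D$-algebra homomorphism $\varphi\colon S\to T$ between Specker $D$-algebras preserves the lattice operations $\vee$ and $\wedge$; since $\varphi$ already respects $+$, $\cdot$, the $D$-action, and the unit by hypothesis, this is all that remains to make it an $\ell$-algebra homomorphism. The whole argument rests on the fact, established in Theorem~\ref{order prop}, that the order is intrinsically determined by the algebra structure, so that a map respecting the algebra operations is forced to respect the order.

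First I would record that $\varphi$ carries idempotents to idempotents, since $\varphi(e)^2=\varphi(e^2)=\varphi(e)$, and that because the boolean operations are expressible through the ring operations ($e\wedge f=ef$, $e\vee f = e+f-ef$, $\lnot e = 1-e$), the restriction $\varphi|_{\Id(S)}\colon \Id(S)\to\Id(T)$ is a boolean homomorphism. In particular orthogonality is preserved: if $e_i\wedge e_j=0$ then $\varphi(e_i)\wedge\varphi(e_j)=\varphi(e_ie_j)=\varphi(0)=0$. Next, given $s,t\in S$, I would pass to a common orthogonal decomposition exactly as in the proofs of Theorem~\ref{perp is specker} and Theorem~\ref{order prop}, writing $s=\sum_i a_ie_i$ and $t=\sum_i b_ie_i$ with the $e_i$ pairwise orthogonal. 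Applying $\varphi$ and using additivity and $D$-linearity gives $\varphi(s)=\sum_i a_i\varphi(e_i)$ and $\varphi(t)=\sum_i b_i\varphi(e_i)$, which by the previous observation are orthogonal decompositions of $\varphi(s)$ and $\varphi(t)$ in $T$ sharing the common idempotents $\varphi(e_i)$.

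Finally I would invoke Claim~\ref{claim: max and min} twice. In $S$ it gives $s\wedge t=\sum_i\min(a_i,b_i)e_i$ and $s\vee t=\sum_i\max(a_i,b_i)e_i$, so applying $\varphi$ yields $\varphi(s\wedge t)=\sum_i\min(a_i,b_i)\varphi(e_i)$ and $\varphi(s\vee t)=\sum_i\max(a_i,b_i)\varphi(e_i)$. Applying the Claim in $T$ to the decompositions of $\varphi(s)$ and $\varphi(t)$ gives $\varphi(s)\wedge\varphi(t)=\sum_i\min(a_i,b_i)\varphi(e_i)$ and $\varphi(s)\vee\varphi(t)=\sum_i\max(a_i,b_i)\varphi(e_i)$. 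Comparing the two computations yields $\varphi(s\wedge t)=\varphi(s)\wedge\varphi(t)$ and $\varphi(s\vee t)=\varphi(s)\vee\varphi(t)$, as required.

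The only delicate point, and the step I would watch most carefully, is that the hypotheses of Claim~\ref{claim: max and min} genuinely transfer to $T$: one must be sure that $\{\varphi(e_i)\}$ is again a pairwise orthogonal family. This is where non-injectivity of $\varphi$ might seem worrisome, since distinct $e_i$ can have equal or zero images; but orthogonality is robust, as collapsing can only send an idempotent to $0$, which is harmless in an orthogonal decomposition, and the Claim imposes no distinctness on the coefficients. I expect no further obstacle. Crucially, because the argument uses only the explicit order description of Theorem~\ref{order prop} together with the computational Claim~\ref{claim: max and min}, rather than the choice-dependent Theorem~5.1 of \cite{BMMO15a}, the resulting proof is choice-free.
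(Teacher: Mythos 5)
Your proof is correct and is essentially the paper's own argument: the paper's proof is just a pointer to the original argument of \cite[Cor.~5.3]{BMMO15a} with the choice-dependent order theorem replaced by Theorem~\ref{order prop}, and what you have written out --- passing to a common orthogonal decomposition, noting that a unital $D$-algebra homomorphism preserves idempotents and orthogonality, and applying Claim~\ref{claim: max and min} in both $S$ and $T$ --- is exactly that argument made explicit and choice-free. Your handling of the delicate point is also right: non-injectivity can only collapse idempotents to $0$, which does not disturb pairwise orthogonality, and Claim~\ref{claim: max and min} requires neither distinct coefficients nor nonzero idempotents.
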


As an immediate consequence of Theorem~\ref{thm: morphisms are l-algebra morphisms} we obtain:

\begin{corollary}
Let $S$ be a Specker $D$-algebra and $B=\Id(S)$. The map $(-)^\perp : S \to D[B]^*$ of Theorem~\emph{\ref{perp is specker}} is an $\ell$-algebra isomorphism.
\end{corollary}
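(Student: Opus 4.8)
The plan is to combine the two isomorphisms already established. By Theorem~\ref{perp is specker}, the map $(-)^\perp : S \to D[B]^*$ is a $D$-algebra isomorphism, and in particular a unital $D$-algebra homomorphism. By Theorem~\ref{thm: morphisms are l-algebra morphisms}, every unital $D$-algebra homomorphism between Specker $D$-algebras is automatically an $\ell$-algebra homomorphism. Thus the only genuine content to verify is that the \emph{codomain} $D[B]^*$ is itself a Specker $D$-algebra, so that Theorem~\ref{thm: morphisms are l-algebra morphisms} applies to $(-)^\perp$.

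First I would observe that $D[B]^*$ is a boolean power of $D$, by its very definition (Definition~\ref{def:boolean power}). By Corollary~\ref{rem:Specker=boolean power}, boolean powers of $D$ are precisely the Specker $D$-algebras up to isomorphism; hence $D[B]^*$ is a Specker $D$-algebra. With both $S$ and $D[B]^*$ now known to be Specker $D$-algebras, Theorem~\ref{thm: morphisms are l-algebra morphisms} guarantees that the $D$-algebra isomorphism $(-)^\perp$ preserves the lattice operations, i.e.\ it is an $\ell$-algebra homomorphism. Its inverse is likewise a unital $D$-algebra homomorphism between Specker $D$-algebras, hence also an $\ell$-algebra homomorphism by the same theorem. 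Therefore $(-)^\perp$ is an $\ell$-algebra isomorphism.

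I do not anticipate a serious obstacle here, since the statement is essentially a formal consequence of results already in hand; the ordering on $D[B]^*$ is the unique $f$-algebra ordering supplied by Theorem~\ref{order prop}, and no ad hoc computation with the explicit operations of Definition~\ref{def:boolean power} is needed. The one point meriting a sentence of care is confirming that Theorem~\ref{thm: morphisms are l-algebra morphisms} is applicable in \emph{both} directions (to $(-)^\perp$ and to its inverse), which is what upgrades the conclusion from ``$\ell$-algebra homomorphism'' to ``$\ell$-algebra isomorphism.'' Since a bijective $\ell$-algebra homomorphism whose inverse is also an $\ell$-algebra homomorphism is an $\ell$-algebra isomorphism, this completes the argument.
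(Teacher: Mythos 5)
Your proof is correct and matches the paper's own reasoning: the paper states this corollary as an immediate consequence of Theorem~\ref{thm: morphisms are l-algebra morphisms}, exactly as you argue. Your spelled-out details (that $D[B]^*$ is itself a Specker $D$-algebra, and that the theorem applies to both $(-)^\perp$ and its inverse) are precisely what makes the paper's ``immediate consequence'' rigorous.
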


\begin{remark} \label{order remark}
Let $D$ be a totally ordered domain and $B$ a boolean algebra. 
Clearly $D$ is a lattice, where $a\wedge b=\min(a,b)$ and $a\vee b=\max(a,b)$ for each $a,b\in D$.
\begin{enumerate}[$(1)$]
\item The positive cone $P$ of $D[B]^*$ for the partial order $\le$ defined in Theorem~\ref{order prop} can be described by 
\[
f \in P \mbox{ iff } f(a) = 0 \mbox{ for each } a < 0.
\] 
To see this, if $f \in D[B]^*$, then by the comments before Theorem~\ref{perp is specker} applied to the map $(-)^\perp : D[B] \to D[B]^*$, we have $f = (\sum_{a \in D} a f(a))^\perp$.
If $f(a) = 0$ for each $a < 0$, then the description of $P$ in the proof of Theorem~\ref{order prop} shows that $f \in P$. Conversely, if $f \in P$, then there are $e_i \in B$ and $0 \le a_i \in D$ with $f = \sum_{i=1}^n a_i e_i^\perp$. Let $a < 0$. Then
\[
f(a) = \bigvee \{ a_1 e_1^\perp(b_1) \wedge \cdots \wedge a_ne_n^\perp(b_n) : a_1b_1 + \cdots + a_nb_n = a\}.
\]
Because $a < 0$ and all $a_i \ge 0$, if $\sum_i a_ib_i =a$, then some $b_i < 0$, and so $e_i^\perp(b_i) = 0$. This shows that $f(a) = 0$. From this we see that 
\[
f \le g \mbox{ iff } (g-f)(a) = 0 \mbox{ for each } a < 0.
\]

\item
The meet and join in $D[B]^*$ are calculated by
\begin{align*}
(f \wedge g)(a) &= {\displaystyle{\bigvee}} \{f(b) \wedge g(c): {\min(b,c) = a}\}, \\
(f \vee g)(a) &= {\displaystyle{\bigvee}} \{f(b) \wedge g(c): {\max(b,c) = a}\}.
\end{align*}
We only prove the first equality as the second is proved similarly. Define $h : D \to B$ by $h(a) = {\displaystyle{\bigvee}} \{f(b) \wedge g(c): {\min(b,c) = a}\}$. We show that $h=f\wedge g$. It is easy to see 
that $h \in D[B]^*$. Therefore, $-h(c) = h(-c)$ for each $c \in D$ by the definition of scalar multiplication in $D[B]^*$.
To see that $h \le f,g$, let $a < 0$. Then 
\begin{align*}
(f - h)(a) &= \bigvee \{ f(b) \wedge (-h)(c) : b+c = a\} = \bigvee \{ f(b) \wedge h(-c) : b+c = a\} \\
&= \bigvee \{ f(b) \wedge \bigvee \{ f(d) \wedge g(e) : \min(d, e) = -c\} : b+c = a\} \\
&= \bigvee \{ f(b) \wedge f(d) \wedge g(e) : \min(d, e) = -c, b+c = a\}.
\end{align*}
If $b + c = a$, then $b < -c$ since $a < 0$. Therefore, if $\min(d, e) = -c$, then $b < d$. This implies that $f(b) \wedge f(d) = 0$ and hence $(f-h)(a) = 0$. Thus, $h \le f$. Similarly, $h \le g$, which gives $h \le f \wedge g$. 

To see the reverse inequality, suppose that $k \in D[B]^*$ with $k \le f, g$.  We show that $k \le h$. It follows from (1) that $k \le f$ implies $(f-k)(a) = 0$ for all $a < 0$. As we saw above,
\[
(f-k)(a) = \bigvee \{ f(b) \wedge k(-c) : b + c = a\}.
\]
Therefore, $f(b) \wedge k(-c) = 0$ whenever $b + c < 0$. Similarly, $g(b) \wedge k(-c) = 0$ whenever $b + c < 0$. We have
\begin{align*}
(h-k)(a) &= \bigvee \{ h(b) \wedge k(-c) : b + c = a\} \\
&= \bigvee \{ \bigvee \{ f(b_1) \wedge g(b_2) :\min(b_1,b_2) = b \} \wedge k(-c) : b + c = a\} \\
&= \bigvee \{ f(b_1) \wedge g(b_2) \wedge k(-c) : \min(b_1, b_2) = b, b + c = a\}.
\end{align*}
If $\min(b_1, b_2) = b$ and $b + c = a < 0$, then either $b_1 + c < 0$ or $b_2 + c < 0$. Therefore, either $f(b_1) \wedge k(-c) = 0$ or $f(b_2) \wedge k(-c) = 0$. Consequently, if $\min(b_1, b_2) + c = a$, then $f(b_1) \wedge g(b_2) \wedge k(-c) = 0$. From this it follows that $(h-k)(a) = 0$ if $a < 0$, so $k \le h$. In particular, $f \wedge g \le h$. Thus, $h = f \wedge g$.
\end{enumerate}
\end{remark}

\begin{remark}
Let $S$ be a Specker $D$-algebra and $s, t \in S$. It is not true in general that $s \leq t$ iff $s^\perp(a) \leq t^\perp(a)$ for all $a \in D$. For example, while $0 \le 1$, we have $1 = 0^\perp(0)  \not\leq 1^\perp(0) = 0$ because the full orthogonal decompositions of $0, 1$ are $0 = 0 \cdot 1$ and $1 = 1 \cdot 1$, respectively. This drawback will be corrected in Lemma~\ref{* arithmetic}(2) using a different way of viewing boolean powers of $D$, which we turn to next.
\end{remark}

\section{Proximities on Specker algebras and decreasing decompositions} \label{sec:proximities on Specker}

As we pointed out in the introduction, for a compact Hausdorff space $X$, there is a standard notion of proximity on the boolean algebra $\RO(X)$ of regular open subsets of $X$ given by $U\prec V$ iff $\cl(U)\subseteq V$. De Vries \cite{deV62} axiomatized proximity relations on arbitrary boolean algebras. This has resulted in the notion of a de Vries proximity $\prec$ on a boolean algebra $B$. A {\em de Vries algebra} is a pair $(B,\prec)$, where $B$ is a complete boolean algebra and $\prec$ is a de Vries proximity on $B$ (see \cite{Bez10}). 

In \cite{BMMO15b} de Vries proximities on boolean algebras were generalized to proximities on arbitrary torsion-free $f$-algebras over a totally ordered domain $D$. 

\begin{definition} \label{proximity definition} \cite[Def.~4.2]{BMMO15b}
Let $S$ be a torsion-free $f$-algebra over $D$. We call a binary relation $\lhd$ on $S$ a {\em proximity} if the following axioms are satisfied:
\begin{enumerate}
\item[(P1)] $0\lhd 0$ and $1 \lhd 1$.
\item[(P2)] $s \lhd t$ implies $s \le t$.
\item[(P3)] $s \le t \lhd r \le u$ implies $s \lhd u$.
\item[(P4)] $s \lhd t,r$ implies $s \lhd t \wedge r$.
\item[(P5)] $s \lhd t$ implies $-t \lhd -s$.
\item[(P6)] $s \lhd t$ and $r \lhd u$ imply $s+r \lhd t+u$.
\item[(P7)] $s \lhd t$ implies $as \lhd at$ for each $0 < a \in D$, and $as \lhd at$ for some $0 < a \in D$ implies $s \lhd t$.
\item[(P8)] $s,t,r,u \ge 0$ with $s \lhd t$ and $r \lhd u$ imply $sr \lhd tu$.
\item[(P9)] $s \lhd t$ implies there is $r\in S$ with $s \lhd r \lhd t$.
\item[(P10)] $s > 0$ implies there is $0 < t\in S$ with $t \lhd s$.
\end{enumerate}
We call a pair $(S,\lhd)$ a {\em proximity $D$-algebra} if $S$ is a torsion-free $f$-algebra over $D$ and $\lhd$ is a proximity on $S$. If $S$ is a Specker $D$-algebra, then we call $(S,\lhd)$ a {\em proximity Specker $D$-algebra}.
\end{definition}

\begin{remark} \label{rem: prox def}
\begin{enumerate}
\item[]
\item The axioms (P1)--(P5) and (P9)--(P10) are direct analogues of the corresponding de Vries axioms, while the axioms (P6)--(P8) govern the interaction between the algebra operations and proximity on $S$.
\item Since every Specker $D$-algebra $S$ is torsion-free, if $S$ is nonzero, we always identify $D$ with a subalgebra of $S$ by sending $a \in D$ to $a\cdot 1 \in S$.
\item It is an easy consequence of the axioms that $s\lhd t$ and $r\lhd u$ imply $s\wedge r\lhd t\wedge u$ and $s\vee r\lhd t\vee u$, and that $s \lhd t$ iff $as \lhd bt$ for $0 < a\le b \in D$. Hence, it follows from (P1), (P7), and (P5) that for each $a \in D$, we have $a \lhd a$.
\item The right-to-left implication in (P7) plays an important role in our considerations (see the proofs of Propositions~\ref{prop:4.9} and \ref{prop:5.4}). This implication is problematic if $D$ is not a domain, so 
in Definition~\ref{proximity definition} it is essential that $D$ is a domain.
\end{enumerate}
\end{remark}

Let $S$ be a Specker $D$-algebra and $B=\Id(S)$ the boolean algebra of idempotents of $S$. If $\lhd$ is a proximity on $S$, we can consider its restriction to $B$. It was shown in \cite{BMMO15b} using orthogonal decompositions that the restriction of $\lhd$ is a de Vries proximity on $B$. The proof in \cite{BMMO15b} is choice-free.

\begin{proposition} \label{prop:4.6} \cite[Prop.~5.1]{BMMO15b}
Let $\lhd$ be a proximity on a Specker $D$-algebra $S$. Then $\lhd$ restricts to a de Vries proximity on $\Id(S)$.
\end{proposition}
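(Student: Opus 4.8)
The plan is to verify, one axiom at a time, that the restriction $\prec$ of $\lhd$ to $B:=\Id(S)$ satisfies the de Vries axioms. First I would record two facts that make the passage between $S$ and $B$ painless. By Theorem~\ref{order prop} the order of $S$ restricts to the usual order on $B$, and by Claim~\ref{claim: max and min}, applied to the common refinement of two idempotents $e,f$ (whose pieces are the orthogonal idempotents $ef,\ e(1-f),\ (1-e)f,\ (1-e)(1-f)$), the lattice operations of $S$ restrict on $B$ to the boolean meet $e\wedge f=ef$ and join $e\vee f=e+f-ef$, while the boolean complement is $\lnot e=1-e$. With these identifications, the order-theoretic axioms are immediate: $1\prec 1$ is (P1); $e\prec f\Rightarrow e\le f$ is (P2); $e\le f\prec g\le h\Rightarrow e\prec h$ is (P3); and $e\prec f,\ e\prec g\Rightarrow e\prec f\wedge g$ is (P4), since the meet computed in $S$ is the boolean meet. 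For the complementation axiom $e\prec f\Rightarrow\lnot f\prec\lnot e$ I would combine (P5), which gives $-f\lhd -e$, with $1\lhd 1$ from (P1) via (P6) to obtain $1-f\lhd 1-e$, that is, $\lnot f\prec\lnot e$.

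The two remaining axioms carry the weight of the proof, and both reduce to a single lemma. For the interpolation axiom, given $e\prec f$ I would first use (P9) to interpolate in $S$, getting $e\lhd r\lhd f$ with $0\le e\le r\le f\le 1$, and then replace the (possibly non-idempotent) $r$ by the idempotent $c:=\func{supp}(r)$, the least idempotent with $cr=r$, namely the join of the idempotents carrying the nonzero coefficients in a full orthogonal decomposition of $r$. Since $r\le c$, axiom (P3) yields $e\lhd c$, and the Support Lemma below will give $c\lhd f$; together these produce $e\prec c\prec f$ with $c\in B$. For the density axiom, given $0<e$ in $B$ I would use (P10) to get $0<t\lhd e$ and take $c:=\func{supp}(t)$; then $c>0$ and the Support Lemma gives $c\lhd e$, so $c\prec e$ is the required nonzero idempotent below $e$.

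The crux is therefore the \emph{Support Lemma}: if $0\le t\lhd f$ with $f\in\Id(S)$, then $\func{supp}(t)\lhd f$. The difficulty is one of direction, which I expect to be the main obstacle: $\func{supp}(t)$ lies \emph{above} $t$, so (P3) cannot be used to push $t\lhd f$ up to the support, and in general there is no idempotent lying between $t$ and $f$. To prove it I would take a full orthogonal decomposition $t=\sum_i a_ie_i$ with the $a_i$ distinct, set $g:=\func{supp}(t)=\bigvee\{e_i:a_i\ne 0\}$, and observe that $g\le f$ and $0<a_i\le 1$ on the support (because $t\le f\le 1$). Letting $a:=\min\{a_i:a_i\ne 0\}>0$, the inequality $ag\le t$ holds by the description of the order in Theorem~\ref{order prop}, since $a\le a_i$ on each support piece; hence $ag\le t\lhd f$ gives $ag\lhd f$ by (P3). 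Now $ag\lhd f=1\cdot f$ with $0<a\le 1$, so the scalar-cancellation statement of Remark~\ref{rem: prox def}(3), namely that $s\lhd t$ iff $as\lhd bt$ for $0<a\le b$, yields $g\lhd f$, i.e. $\func{supp}(t)\lhd f$. This cancellation is ultimately the right-to-left implication of (P7), the place where it is essential that $D$ is a domain, and it is precisely the tool that defeats the direction problem and completes the argument.
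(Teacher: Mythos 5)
Your overall strategy is sound, and it is essentially the strategy the paper attributes to \cite[Prop.~5.1]{BMMO15b} (the paper gives no proof of this proposition itself, only that citation, noting the proof there uses orthogonal decompositions and (P7)): the order-theoretic axioms restrict immediately, and interpolation and density reduce, via orthogonal decompositions, to a cancellation statement for idempotents. Your supporting facts are also correct: $g \le f$, the nonzero coefficients of $t$ lie in $(0,1]$, and $ag \le t$. The gap is exactly at the step you single out as the crux. Remark~\ref{rem: prox def}(3) cannot be read as a fixed-scalar biconditional ``$s \lhd t$ iff $as \lhd bt$'' for a given pair $0 < a \le b$; in that reading its right-to-left direction is false, so it cannot carry you from $ag \lhd 1\cdot f$ to $g \lhd f$. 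The right-to-left half of (P7) cancels only a \emph{common} scalar ($as \lhd at$ for some $a > 0$ implies $s \lhd t$), and (P3) cannot manufacture one, since it only moves the left side down or the right side up.

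The failure is genuine, not merely a citation problem. Take $D = \mathbb{Q}$, let $B = \RO([0,1])$ with $U \prec V$ iff $\cl(U) \subseteq V$, and work in $(D[B]^\flat, \prec^\flat)$, which is a proximity Specker $D$-algebra by Theorem~\ref{de Vries lift}. Put $U_1 = (0.2,0.4)$, $U_2 = (0.2,0.3)$, $V_1 = (0.1,0.5)$, $V_2 = (0.1,0.4)$, so that $U_1 \prec V_1$ and $U_2 \prec V_2$ but $U_1 \not\prec V_2$. Let $s$ take the values $1, U_1, U_2, 0$ on $(-\infty,0], (0,2], (2,4], (4,\infty)$, and $t$ the values $1, V_1, V_2, 0$ on $(-\infty,0], (0,1], (1,2], (2,\infty)$. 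By Theorem~\ref{* axiomitization}(2), $(\tfrac12 s)(c) = s(2c)$, so $\tfrac12 s$ takes the values $1, U_1, U_2, 0$ on $(-\infty,0], (0,1], (1,2], (2,\infty)$. Hence $\tfrac12 s \prec^\flat t$, while $s \not\prec^\flat t$ because $s(2) = U_1 \not\prec V_2 = t(2)$. So ``$as \lhd t$ with $0 < a \le 1$ implies $s \lhd t$'' is false in general, even for $s, t \ge 0$; it is true when both sides are idempotents, but that is precisely what must be proved, and the proof must use idempotency.

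Here is a repair that stays inside your framework and invokes (P7) legitimately, i.e.\ with the same scalar on both sides (presumably this is what the paper means when it says the proof of \cite[Prop.~5.1]{BMMO15b} ``uses (P7)''; compare the proof of Lemma~\ref{prop:5.4}(1)). From $ag \lhd f$, axiom (P5) gives $-f \lhd -ag$, and (P6) together with $a \lhd a$ gives $a - f \lhd a - ag = a\lnot g$. Joining with $0 \lhd 0$ (join preservation is an unproblematic consequence of (P3)--(P5)) gives $(a-f)\vee 0 \lhd (a\lnot g)\vee 0 = a\lnot g$. By Claim~\ref{claim: max and min}, $(a-f)\vee 0 = \max(a-1,0)\, f + \max(a,0)\,\lnot f = a\lnot f$, where $0 < a \le 1$ and the idempotency of $f$ enter. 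Thus $a\lnot f \lhd a\lnot g$, and now (P7) cancels the common scalar $a$ to give $\lnot f \lhd \lnot g$; one more application of (P5) and (P6) yields $g \lhd f$. With this argument substituted for the appeal to Remark~\ref{rem: prox def}(3), your Support Lemma, and with it the whole proof, goes through. (Note that over $D = \mathbb{Z}$ the gap is invisible, since $0 < a \le 1$ forces $a = 1$; it is for domains with elements strictly between $0$ and $1$ that the cancellation genuinely needs an argument. Alternatively, the Support Lemma follows from Proposition~\ref{prop:4.9}, whose proof is independent of the present proposition.)
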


Our next goal is to prove the converse of Proposition~\ref{prop:4.6}, that a de Vries proximity on $\Id(S)$ has a unique extension to a proximity on $S$.
For this we need to work with decreasing decompositions instead of orthogonal decompositions. Decreasing decompositions, which are similar to Mundici's good sequences \cite[p.~28]{Mun86}, were studied for Specker algebras in \cite[Sec.~5]{BMMO15b}, where it was shown how to go back and forth between orthogonal and decreasing decompositions. 

\begin{remark} \label{rem: from orthogonal to decreasing}
We will briefly describe how to go from an orthogonal decomposition to a decreasing decomposition since this will be used in Proposition~\ref{prop: relation between flat and perp}. Let $S$ be a Specker $D$-algebra, $s \in S$, and $s=\sum_{i=0}^n a_i f_i$ be an orthogonal decomposition of $s$ with the $a_i \in D$ distinct and nonzero. Without loss of generality we may assume that $a_0 < \cdots < a_n$. We can then write
\[
s = a_0 (f_0 + \cdots + f_n) + (a_1-a_0)(f_1 + \cdots + f_n) + \cdots + (a_n - a_{n-1})f_n.
 \]
Therefore, $s = \sum_{i=0}^n b_ie_i$, where $b_0 = a_0$,  $b_i=a_i-a_{i-1}$ for $i\ge 1$, and $e_i=\sum_{j=0}^i f_j=\bigvee_{j=0}^i f_j$, where the second equality follows from \cite[Eqn.~XIII.3(14)]{Bir79}. This exhibits $s$ as a linear combination of a sequence of strictly decreasing idempotents. Moreover, all the coefficients are nonzero and all of them except possibly $b_0$ are positive. Furthermore, if $s=\sum_{i=0}^n a_i f_i$ is a full orthogonal decomposition of $s$, then $e_0 = 1$. In this case we will write the corresponding decreasing decomposition as $s = a_0 + \sum_{i=1}^n b_i e_i$.
\end{remark}

\begin{definition} \label{def: decreasing form}
Let $S$ be a Specker $D$-algebra and let $s\in S$.
\begin{enumerate}[$(1)$]
\item We say that $s$ is in {\em decreasing form} if $s = \sum_{i=0}^n b_ie_i$ with $e_0 \ge \cdots \ge e_{n}$, $b_0 \ne 0$ and $b_i > 0$ for $i\ge 1$.
\item We say that $s$ is in {\em full decreasing form} if $s = a_0 + \sum_{i=1}^n b_ie_i$ is in decreasing form (with $e_0=1$).
\end{enumerate}
\end{definition}

\begin{remark}
\begin{enumerate}
\item[]
\item Because each element of $S$ has a full orthogonal decomposition, each element has a full decreasing decomposition. Moreover, since a full orthogonal decomposition with distinct nonzero coefficients is unique, each $s \in S $ has a unique representation as $s = a_0 + \sum_{i=1}^n b_ie_i$ with each $b_i > 0$ and $1 = e_0 > e_1 > \cdots > e_n$.

\item As we saw in Section~\ref{sec:specker}, to write two elements in compatible orthogonal form, we cannot assume coefficients are distinct. Similarly, we will see in Lemma~\ref{* properties}(2) that two elements have a compatible decreasing decomposition, but we cannot assume that the idempotents are strictly decreasing. It is for this reason that the idempotents in Definition~\ref{def: decreasing form}(1) are not assumed to be strictly decreasing.
\end{enumerate}
\end{remark}

Using decreasing decompositions, we give an alternative view of boolean powers of $D$.

\begin{definition} \label{star definition}
Let $B$ be a boolean algebra. We define $D[B]^\flat$ to be the set of all decreasing functions $f:D\to B$ for which there exist $1=e_0 > e_1 > \cdots > e_n > 0$ in  $B$ and $a_0 < a_1 < \cdots < a_n$ in $D$ such that
\[
f(a) =
\left\{\begin{array}{ll}
1 & \textrm{if } a \le a_0, \\
e_i & \textrm{if } a_{i-1} < a \le a_i, \\
0 & \textrm{if } a_n < a.\\
\end{array}\right.
\]
\end{definition}

Let $S$ be a Specker $D$-algebra and $B=\Id(S)$. The following proposition illustrates that $D[B]^\flat$ encodes decreasing decompositions of elements of $S$ into an algebra of functions from $D$ to $B$.

\begin{proposition} \label{decr dec}
Let $S$ be a Specker $D$-algebra and $B=\Id(S)$.
\begin{enumerate}[$(1)$]
\item Let $s \in S$ be in full decreasing form $s = a_0 + \sum_{i=1}^n b_i e_i$ and set $a_i = a_0 + b_1 + \cdots + b_i$ for $1 \le i \le n$. Define $s^\flat:D \rightarrow B$ by
\[
s^\flat(a) = \left\{\begin{array}{lll}
1 & \textrm{if} & a \le a_0, \\
e_i & \textrm{if} & a_{i-1} < a \le a_i, \\
0 & \textrm{if} & a_n < a.
\end{array}\right.
\]
Then $s^\flat\in D[B]^\flat$.
\item Conversely, for $f \in D[B]^\flat$, let the image of $f$ in $B$ be $ \{1=e_0 > e_1 > \cdots > e_n > 0\}$, and for each $i \leq n$, let $a_i$ be the largest element of $f^{-1}(e_i)$. Then $s = a_0 + \sum_{i=1}^n (a_i - a_{i-1})f(a_i)$ is an element of $S$ in full decreasing form and $s^\flat = f$.
\end{enumerate}
\end{proposition}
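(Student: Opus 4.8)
The plan is to read the two parts as mutually inverse translations between the unique full decreasing form of an element of $S$ and the step-function data that define a member of $D[B]^\flat$. For part~(1) I would first invoke the remark following Definition~\ref{def: decreasing form}, which guarantees that each $s \in S$ has a \emph{unique} full decreasing form $s = a_0 + \sum_{i=1}^n b_i e_i$ with $1 = e_0 > e_1 > \cdots > e_n > 0$ and $b_i > 0$ for $i \ge 1$; this is what makes $s^\flat$ well defined. Since $b_i > 0$ for $i \ge 1$, the partial sums $a_i = a_0 + b_1 + \cdots + b_i$ satisfy $a_0 < a_1 < \cdots < a_n$. Reading off the definition of $s^\flat$, it takes the value $1$ on $\{a : a \le a_0\}$, the strictly decreasing positive idempotent $e_i$ on $\{a : a_{i-1} < a \le a_i\}$, and $0$ on $\{a : a_n < a\}$. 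This is exactly the template of Definition~\ref{star definition}, witnessed by the $e_i$ and the $a_i$, so $s^\flat \in D[B]^\flat$. Thus part~(1) is immediate once uniqueness of the full decreasing form is in hand.

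For part~(2) the first task is to check that the prescription makes sense. Because $f \in D[B]^\flat$, there are thresholds $a_0 < \cdots < a_n$ and idempotents $1 = e_0 > \cdots > e_n > 0$ as in Definition~\ref{star definition}; as the $e_i$ are pairwise distinct, the level set $f^{-1}(e_i)$ equals $\{a \in D : a_{i-1} < a \le a_i\}$ for $i \ge 1$ and $\{a \in D : a \le a_0\}$ for $i = 0$, each of which has a largest element, namely $a_i$. Hence the elements $a_i$ recovered in the statement are precisely these thresholds and $f(a_i) = e_i$. Setting $b_0 = a_0$ and $b_i = a_i - a_{i-1}$, the strict increase of the thresholds gives $b_i > 0$ for $i \ge 1$, while $1 = e_0 > \cdots > e_n > 0$; therefore $s = a_0 + \sum_{i=1}^n b_i e_i$ is in full decreasing form, as claimed.

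It then remains to verify $s^\flat = f$, and the only point needing care is that the thresholds the recipe of part~(1) attaches to this particular $s$ agree with the original thresholds of $f$. This is a telescoping identity: the partial sums for $s$ are $a_0 + b_1 + \cdots + b_i = a_0 + \sum_{k=1}^i (a_k - a_{k-1}) = a_i$. Since $s$ and $f$ thus share the same idempotents $e_i$ and the same thresholds $a_i$, the two step functions agree everywhere, giving $s^\flat = f$. I expect the main obstacle to be only bookkeeping --- keeping the two indexings consistent and noting that membership in $D[B]^\flat$ is exactly what forces each level set to possess a maximum, so that the $a_i$ in part~(2) are well defined; no algebraic input about $S$ beyond the uniqueness of the full decreasing form is required.
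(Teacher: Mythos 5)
Your proposal is correct and matches the paper's intent exactly: the paper's own proof of Proposition~\ref{decr dec} is the single word ``Straightforward,'' and your argument is precisely the routine verification it leaves to the reader. Both parts --- reading off the template of Definition~\ref{star definition} from the unique strict full decreasing form, and recovering the thresholds $a_i$ as maxima of the level sets $f^{-1}(e_i)$ followed by the telescoping check that $s^\flat = f$ --- are sound.
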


\begin{proof}
Straightforward.
\end{proof}

Let $S$ be a Specker $D$-algebra and $B = \Id(S)$. The reader probably already anticipates that $D[B]^\flat$ is a $D$-algebra and that $(-)^\flat:S\to D[B]^\flat$ is a $D$-algebra isomorphism. This in particular implies that $D[B]^*$ and $D[B]^\flat$ provide two alternative representations of $S$, one that encodes orthogonal decompositions and the other that encodes decreasing decompositions. Consequently, $D[B]^\flat$ also provides an alternative way to view boolean powers of a totally ordered domain $D$.

We first prove that $D[B]^*$ is in bijective correspondence with $D[B]^\flat$, and describe the $D$-algebra structure of $D[B]^\flat$ induced by this bijection. From this we will then derive that $(-)^\flat:S\to D[B]^\flat$ is a $D$-algebra isomorphism.

\begin{theorem} \label{* axiomitization}
For a boolean algebra $B$, there is a bijection between $D[B]^*$ and $D[B]^\flat$ that induces on $D[B]^\flat$ the structure of a Specker $D$-algebra whose operations satisfy,
for all $f,g \in D[B]^\flat$ and $a,b \in D$,
\begin{enumerate}[$(1)$]
\item $(f+g)(a) = \displaystyle{\bigvee} \{f(b_1) \wedge g(b_2):b_1+b_2 \ge a\}$.
\item If $b>0$, then $(bf)(a) = \displaystyle{\bigvee} \{f(c):bc\ge a\}$.
\item If $f,g \geq 0$, then $(fg)(a) =\displaystyle{\bigvee} \{f(b_1) \wedge g(b_2):b_1, b_2 \ge 0, b_1b_2 \ge a\}$.
\end{enumerate}
\end{theorem}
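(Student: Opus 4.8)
The plan is to realize $D[B]^\flat$ as a relabeling of $D[B]^*$ under a ``tail-join'' transform and then transport the algebra structure. Concretely, I would define $\Phi\colon D[B]^* \to D[B]^\flat$ by
\[
\Phi(g)(a) = \bigvee\{g(b) : b \ge a\}.
\]
For $g \in D[B]^*$ with finite support $\{a_0 < \cdots < a_n\}$ and orthogonal values joining to $1$, the function $\Phi(g)$ is the decreasing step function with value $\bigvee\{g(a_j) : a_j \ge a\}$, which is exactly of the form prescribed in Definition~\ref{star definition}, so $\Phi(g) \in D[B]^\flat$. The inverse sends a step function $f$ with values $1 = e_0 > e_1 > \cdots > e_n > 0$ and jump points $a_0 < \cdots < a_n$ to the function $g$ supported on the $a_i$ with $g(a_i) = e_i \wedge \neg e_{i+1}$ (where $e_{n+1} := 0$); a telescoping computation on the chain $e_0 > \cdots > e_{n+1} = 0$ shows the values are pairwise orthogonal with join $1$, so $g \in D[B]^*$, and the two maps are mutually inverse. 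Equivalently, taking $S = D[B]$ (a Specker $D$-algebra with $\Id(S) \cong B$ by Theorem~\ref{thm: specker}), $\Phi$ coincides with $(-)^\flat \circ ((-)^\perp)^{-1}$ via Theorem~\ref{perp is specker} and Proposition~\ref{decr dec}.

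Since $D[B]^*$ is a Specker $D$-algebra (Theorem~\ref{perp is specker} together with Theorem~\ref{thm: specker}), transporting its operations along $\Phi$ makes $D[B]^\flat$ a Specker $D$-algebra with $\Phi$ an isomorphism, so the only substantive content is the three displayed formulas, which I would obtain by direct computation. For addition, writing $f = \Phi(f^*)$ and $g = \Phi(g^*)$ and using the definition of $+$ on $D[B]^*$ from Definition~\ref{def:boolean power}, I compute
\[
(f+g)(a) = \Phi(f^* + g^*)(a) = \bigvee\{f^*(b) \wedge g^*(c) : b + c \ge a\},
\]
the last step folding the outer tail-join $\bigvee_{d \ge a}$ into the condition $b+c = d \ge a$. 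On the other hand, substituting $f(b_1) = \bigvee\{f^*(b) : b \ge b_1\}$ and $g(b_2) = \bigvee\{g^*(c) : c \ge b_2\}$ into the right-hand side of formula~(1) and distributing $\wedge$ over $\bigvee$, the condition reduces to the existence of $b_1 \le b$ and $b_2 \le c$ with $b_1 + b_2 \ge a$. The key elementary point is that, since $(b_1,b_2)\mapsto b_1+b_2$ is monotone, such $b_1,b_2$ exist iff $b + c \ge a$; hence the two expressions agree. Formula~(2) is handled identically, using that multiplication by a fixed $b > 0$ is monotone, so that some $c' \le c$ satisfies $bc' \ge a$ iff $bc \ge a$.

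The main obstacle is formula~(3), where monotonicity of the product fails in general; this is precisely where the positivity hypothesis enters. First I would record that $f \ge 0$ in $D[B]^\flat$ corresponds under $\Phi^{-1}$ to $f^* \ge 0$ in $D[B]^*$, which by Remark~\ref{order remark}(1) means $f^*$ is supported on $\{b \ge 0\}$, equivalently $f(a) = 1$ for all $a \le 0$. Given $f,g \ge 0$, the computation mirrors the additive case, yielding
\[
(fg)(a) = \Phi(f^* g^*)(a) = \bigvee\{f^*(b) \wedge g^*(c) : b, c \ge 0,\ bc \ge a\},
\]
where the restriction to $b,c \ge 0$ is free because $f^*,g^*$ vanish on negatives. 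Expanding the right-hand side of~(3) via the tail-join formulas then reduces matters to showing that, for $b,c \ge 0$, there exist $0 \le b_1 \le b$ and $0 \le b_2 \le c$ with $b_1 b_2 \ge a$ iff $bc \ge a$. This is exactly the monotonicity of $(b_1,b_2)\mapsto b_1 b_2$ on the nonnegative cone, valid because $D$ is a totally ordered domain, with $bc$ the product of the largest admissible factors. Assembling the three computations gives the stated operations and completes the proof.
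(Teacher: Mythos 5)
Your proposal is correct and takes essentially the same route as the paper: the same tail-join bijection $\Phi(g)(a)=\bigvee\{g(b):b\ge a\}$ (the paper's $\alpha$), transport of the Specker $D$-algebra structure along it, and verification of the displayed formulas by distributing meets over the (finite) joins and using monotonicity of the operations. The only difference is one of detail: the paper verifies only formula (1) and declares (2) and (3) ``similar,'' whereas you explicitly address the point where (3) genuinely differs---positivity forces $f^*,g^*$ to vanish on negatives, so that monotonicity of multiplication, valid only on the nonnegative cone, can be applied.
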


\begin{proof}
Define $\alpha:D[B]^* \rightarrow D[B]^\flat$ by $\alpha(f)(a)=\displaystyle{\bigvee} \{f(b):b \geq a\}$ for each $f \in D[B]^*$ and $a \in D$. To see that $\alpha$ is well defined, let $f \in D[B]^*$, and let $\{a \in D : f(a) \ne 0\} = \{a_0 < \cdots < a_n\}$. Set $e_i = f(a_i) \vee \cdots \vee f(a_n)$ for $0 \le i \le n$. Then $1=e_0 > e_1 > \cdots > e_n > 0$, $\alpha(f)^{-1}(1) = (-\infty, a_0]$, and $\alpha(f)^{-1}(0) = (a_n,\infty)$.  Moreover, if $a_{i-1} < a \le a_i$, then $\alpha(f)(a) = f(a_i) \vee \cdots \vee f(a_n) = e_i$, so that $\alpha(f)^{-1}(e_i) = (a_{i-1},a_{i}]$. Thus, $\alpha(f) \in D[B]^\flat$, and $\alpha$ is well defined.

To see that $\alpha$ is onto, let $g \in D[B]^\flat$, and let $\{1 = e_0 > e_1 > \cdots > e_n > 0\}$ be the image of $g$ in $B$. For each $i$, let $a_i$ be the largest element of $g^{-1}(e_i)$. Define $f:D\to B$ by $f(a_0) = 1$, $f(a_n) = e_n$, $f(a_i) = e_i \wedge \lnot e_{i+1}$ for each $1 \le i \le n-1$, and $f(a) = 0$ for all $a  \in D \setminus \{a_0,\ldots,a_n\}$. Then $f\in D[B]^*$. We show that $\alpha(f) = g$. If $a \le a_0$, then $\alpha(f)(a) = 1$ as it is the join of the $f(b)$ over all $b \ge a$, so $\alpha(f)(a)=g(a)$. If $a_0 < a \le a_1$, then $\alpha(f)(a)$ is the join of $e_1 \wedge \lnot e_2,e_2 \wedge \lnot e_3,\dots,e_{n-1} \wedge \lnot e_n,e_n$, which is $e_1= g(a)$. Similarly, if $a_{i-1} < a \le a_i$, then $\alpha(f)(a) = e_i = g(a)$, and if $a_n < a$, then $\alpha(f)(a)=0=g(a)$. Thus, $\alpha(f) = g$.

To see that $\alpha$ is 1-1, let $f,g \in D[B]^*$ with $\alpha(f) = \alpha(g)$. For $a \in D$ we have
\[
\alpha(f)(a) = f(a) \vee \bigvee \{ f(b) : b > a\}. 
\]
Since the values of $f$ are pairwise orthogonal, $f(a) = \alpha(f)(a) \wedge \lnot \bigvee \{ f(b) : b > a\}$. However, $\bigvee \{ f(b) : b > a\} = \bigvee \{ \alpha(f)(b) : b > a\}$, so $f(a) = \alpha(f)(a) \wedge \lnot \bigvee \{ \alpha(f)(b) : b > a\}$. Similarly, $g(a) = \alpha(g)(a) \wedge \lnot \bigvee \{ \alpha(g)(b) : b > a\}$. Since $\alpha(f) = \alpha(g)$, we see that $f(a) = g(a)$. Thus, $f = g$.

Now since $\alpha : D[B]^* \to D[B]^\flat$ is a bijection, $D[B]^\flat$ inherits the structure of a Specker $D$-algebra from $D[B]^*$. Therefore, what remains to verify is that the algebraic structure that $D[B]^\flat$ inherits from $D[B]^*$ satisfies (1)--(3) of the theorem. In light of the bijection with $D[B]^*$, it suffices to show that if $f,g \in D[B]^*$, then $\alpha(f), \alpha(g)$ both behave as stated in (1)--(3). Thus, we assume that $f,g \in D[B]^*$ and $a,b\in D$. Since the proofs are similar, we only prove (1).

Using the definition of $f+g$ in $D[B]^*$, we have:
\begin{align*}
(\alpha(f) + \alpha(g))(a) &= \bigvee_{b_1+b_2\ge a} \alpha(f)(b_1) \wedge \alpha(g)(b_2) \\
&= \bigvee_{b_1 + b_2 \ge a} \left(\bigvee_{c_1 \ge b_1} f(c_1)\right) \wedge \left (\bigvee_{c_2 \ge b_2} g(c_2)\right) = \bigvee_{b_1 + b_2 \ge a} \left(\bigvee_{c_i \ge b_i} f(c_1) \wedge g(c_2)\right) \\
&= \bigvee_{c \ge a} \left(\bigvee_{c_1 + c_2 = c} f(c_1) \wedge g(c_2)\right) = \bigvee_{c \ge a} {(f+g)}(c) = \alpha(f+g)(a).
\end{align*}
\end{proof}

\begin{remark} \label{rem: idempotents of D[B]flat}
By Remark~\ref{rem: idempotents of D[B]*}, there is an isomorphism $B \to \Id(D[B]^*)$ sending $e$ to $e^\perp$ for each $e \in B$. Since $\alpha : D[B]^* \to D[B]^\flat$ restricts to an isomorphism from $\Id(D[B]^*)$ to $\Id(D[B]^\flat) $, the composition $\tau_B$ is an isomorphism from $B$ to $\Id(D[B]^\flat)$. If $e^\flat = \tau_B(e)$, then it follows from the definition of $\alpha$ and the description of $e^\perp$ that
\[
e^\flat(a) = \left\{ \begin{array}{ll} 1 & \textrm{if } a \le 0,\\  e & \textrm{if } 0 < a \le 1, \\ 0 & \textrm{if } 1 < a.\end{array} \right.
\]
In particular, $0^\flat(a) = 1$ if $a \le 0$ and $0^\flat(a) = 0$ if $0 < a$. Similarly, $1^\flat(a) = 1$ if $a \le 1$ and $1^\flat(a) = 0$ if $1 < a$. We note that $0^\flat$ and $1^\flat$ are then the $0$ and $1$ of $D[B]^\flat$, respectively.
\end{remark}

\begin{proposition} \label{prop: relation between flat and perp}
Let $S$ be a Specker $D$-algebra and $B=\Id(S)$. The following diagram commutes.
\[
\begin{tikzcd}[row sep = 1pc]
& D[B]^\flat \\
S \arrow[ur, "(-)^\flat"] \arrow[dr, "(-)^\perp"'] & \\
& D[B]^* \arrow[uu, "\alpha"']
\end{tikzcd}
\]
Consequently, $(-)^\flat$ is an $\ell$-algebra isomorphism.  
\end{proposition}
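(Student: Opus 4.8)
The plan is to prove the statement in two stages: first establish that the triangle commutes by checking $s^\flat = \alpha(s^\perp)$ pointwise for every $s \in S$, and then obtain that $(-)^\flat$ is an $\ell$-algebra isomorphism as a purely formal consequence. Both $s^\perp$ and $s^\flat$ are described by explicit piecewise formulas, and $\alpha$ is defined by $\alpha(f)(a) = \bigvee\{f(b) : b \geq a\}$, so the commutativity amounts to a direct comparison once the orthogonal and decreasing data of $s$ are lined up.

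For the first stage I would fix a full orthogonal decomposition $s = \sum_{i=0}^n a_i f_i$ with the coefficients $a_0 < \cdots < a_n$ distinct and the $f_i$ pairwise orthogonal with $\bigvee_i f_i = 1$, so that $s^\perp(a) = f_i$ when $a = a_i$ and $s^\perp(a) = 0$ otherwise. Substituting this into the definition of $\alpha$ gives $\alpha(s^\perp)(a) = \bigvee\{f_i : a_i \geq a\}$, which equals $1$ for $a \leq a_0$, equals $f_i \vee \cdots \vee f_n$ for $a_{i-1} < a \leq a_i$, and equals $0$ for $a > a_n$. On the other side, regrouping as in Remark~\ref{rem: from orthogonal to decreasing} writes $s$ in full decreasing form $s = a_0 + \sum_{i=1}^n (a_i - a_{i-1}) e_i$ with the decreasing idempotents $e_i = f_i \vee \cdots \vee f_n$; here the partial sums $a_0 + (a_1 - a_0) + \cdots + (a_i - a_{i-1})$ telescope to $a_i$, so the thresholds appearing in Proposition~\ref{decr dec}(1) are exactly the original $a_i$. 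Thus Proposition~\ref{decr dec}(1) produces for $s^\flat$ the very same three-case formula, and comparing the two descriptions on each of the ranges $a \leq a_0$, $a_{i-1} < a \leq a_i$, and $a > a_n$ yields $s^\flat = \alpha(s^\perp)$, i.e.\ the triangle commutes.

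For the second stage I would argue formally. By Theorem~\ref{perp is specker} the map $(-)^\perp : S \to D[B]^*$ is a $D$-algebra isomorphism, and by Theorem~\ref{* axiomitization} the map $\alpha$ is a $D$-algebra isomorphism, since $D[B]^\flat$ is given precisely the Specker structure transported along $\alpha$. Hence $(-)^\flat = \alpha \circ (-)^\perp$ is a composite of $D$-algebra isomorphisms and is therefore itself a $D$-algebra isomorphism between the Specker $D$-algebras $S$ and $D[B]^\flat$. Finally, Theorem~\ref{thm: morphisms are l-algebra morphisms} guarantees that any unital $D$-algebra homomorphism between Specker $D$-algebras automatically respects the order, so $(-)^\flat$ and its inverse are $\ell$-algebra homomorphisms; that is, $(-)^\flat$ is an $\ell$-algebra isomorphism.

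The argument involves no real obstacle: the computation is routine and the ``consequently'' is formal. The only point requiring care is the index bookkeeping in the first stage---keeping the least coefficient $a_0$, the strictly increasing $a_i$, and the decreasing idempotents $e_i = f_i \vee \cdots \vee f_n$ aligned across the three descriptions (of $s^\perp$, of $\alpha$, and of $s^\flat$) so that the case boundaries match exactly.
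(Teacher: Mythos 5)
Your proposal is correct and matches the paper's own proof essentially step for step: the paper also verifies commutativity by writing $s$ in full orthogonal form with increasing coefficients, converting to full decreasing form via Remark~\ref{rem: from orthogonal to decreasing}, comparing the two three-case piecewise formulas, and then deduces the $\ell$-algebra isomorphism formally from Theorem~\ref{perp is specker}, Theorem~\ref{* axiomitization}, and Theorem~\ref{thm: morphisms are l-algebra morphisms}. The only differences are notational (your $e_i$ and $f_i$ swap roles relative to the paper's), and your explicit remark that the partial sums telescope back to the $a_i$ is a point the paper leaves implicit.
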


\begin{proof}
We first show that $\alpha \circ (-)^\perp = (-)^\flat$. Let $s\in S$ and write $s = \sum_{i=0}^n a_i e_i$ in full orthogonal form with $a_0 < \cdots < a_n$. Then the full decreasing form of $s$ is $a_0 + (a_1-a_0) f_1 + \cdots + (a_n - a_{n-1})f_n$, where $f_i = e_i \vee \cdots \vee e_n$ by Remark~\ref{rem: from orthogonal to decreasing}. Therefore, $s^\flat(a) = 1$ if $a \le a_1$, $s^\flat(a) = f_i$ if $a_{i-1} < a \le a_i$, and $s^\flat(a) = 0$ if $a_n < a$. On the other hand, $\alpha(s^\perp)(a) = \bigvee\{ e_i : a \ge a_i\}$. Thus, $\alpha(s^\perp)(a) = 1$ if $a \le a_0$, $\alpha(s^\perp)(a) = e_i \vee \cdots \vee e_n = f_i$ if $a_{i-1} < a \le a_i$, and $\alpha(s^\perp)(a) = 0$ if $a_n < a$. Consequently, $\alpha(s^\perp) = s^\flat$, and hence the diagram commutes. 

To conclude the proof, it follows from Theorem~\ref{perp is specker} that $(-)^\perp:S\to D[B]^*$ is a $D$-algebra isomorphism, and it follows from Theorem~\ref{* axiomitization} that $\alpha:D[B]^*\to D[B]^\flat$ is a $D$-algebra isomorphism. Therefore, $(-)^\flat$ is a $D$-algebra isomorphism, thus an $\ell$-algebra isomorphism by Theorem~\ref{thm: morphisms are l-algebra morphisms}. 
\end{proof}

\section{De Vries powers} \label{sec:lifting}

In this section we use decreasing decompositions to lift de Vries proximities on boolean algebras to proximities on Specker $D$-algebras. In the particular case in which $(B,\prec)$ is a de Vries algebra, we lift $\prec$ to a proximity $\prec^\flat$ on $D[B]^\flat$ to
 obtain that $(D[B]^\flat,\prec^\flat)$ is a proximity Specker $D$-algebra, which in addition is a Baer ring (defined below). 
 Following \cite{BMMO15b}, we term such algebras proximity Baer-Specker $D$-algebras. The pair $(D[B]^\flat,\prec^\flat)$ provides a choice-free description of the de Vries power of $D$ by $(B,\prec)$ that in \cite[Def.~3.3]{BMMO15b} was defined in a choice-dependent way via the dual compact Hausdorff space of $(B, \prec)$.

We start by showing that the order on $D[B]^\flat$ is pointwise.

\begin{lemma} \label{* arithmetic}
Let $B$ be a boolean algebra. For $f, g \in D[B]^\flat$ we have:
\begin{enumerate}[$(1)$]
\item $(f \wedge g)(a) = f(a) \wedge g(a)$ for each $a \in D$.
\item $f \le g$ iff $f(a) \le g(a)$ for each $a \in D$.
\end{enumerate}
\end{lemma}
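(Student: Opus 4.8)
The plan is to transport the computation to $D[B]^*$ through the bijection $\alpha$ of Theorem~\ref{* axiomitization}, where the meet is already given explicitly by the formula in Remark~\ref{order remark}(2), and then to use the defining expression $\alpha(h)(a) = \bigvee\{h(b) : b \ge a\}$ to collapse the resulting double join into a pointwise meet. Part~(2) will then follow formally from part~(1) via the standard lattice identity $f \le g \iff f \wedge g = f$.

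For part~(1), I would write $f = \alpha(h)$ and $g = \alpha(k)$ with $h,k \in D[B]^*$. Since $\alpha$ is a $D$-algebra isomorphism between Specker $D$-algebras (Theorem~\ref{* axiomitization}), it is an $\ell$-algebra isomorphism by Theorem~\ref{thm: morphisms are l-algebra morphisms}, and in particular preserves meets, so $f \wedge g = \alpha(h \wedge k)$. Applying the definition of $\alpha$ and then the meet formula of Remark~\ref{order remark}(2), I compute
\[
(f \wedge g)(a) = \bigvee_{d \ge a} (h \wedge k)(d) = \bigvee_{d \ge a}\ \bigvee_{\min(b,c)=d} h(b)\wedge k(c) = \bigvee_{b \ge a,\ c \ge a} h(b)\wedge k(c),
\]
where the last equality records that a pair $(b,c)$ satisfies $\min(b,c) \ge a$ precisely when $b \ge a$ and $c \ge a$. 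Because $h$ and $k$ lie in $D[B]^*$ and so have finite image, this is a finite join, whence distributivity of meet over join in the boolean algebra $B$ lets me factor it as $\left(\bigvee_{b \ge a} h(b)\right) \wedge \left(\bigvee_{c \ge a} k(c)\right) = \alpha(h)(a) \wedge \alpha(k)(a) = f(a) \wedge g(a)$, establishing~(1).

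For part~(2), I would invoke that $D[B]^\flat$ is a lattice, so $f \le g$ is equivalent to $f \wedge g = f$; by part~(1) this says $f(a) \wedge g(a) = f(a)$ for every $a \in D$, i.e.\ $f(a) \le g(a)$ for every $a$. The only delicate point in the whole argument is the reindexing and factoring of the nested joins in part~(1): one must check that passing from the joins over $\{d \ge a\}$ and $\{\min(b,c) = d\}$ to the single join over $\{b \ge a,\ c \ge a\}$ is valid, and that distributivity applies. Both rest on the finiteness of the joins, which is guaranteed by the step-function form of elements of $D[B]^\flat$; everything else is routine.
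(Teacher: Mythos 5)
Your proof is correct and follows essentially the same route as the paper: both transport the computation through $\alpha$ to $D[B]^*$, apply the meet formula of Remark~\ref{order remark}(2), reindex the nested joins over $\{\min(b,c)\ge a\}$ as a join over $\{b\ge a,\ c\ge a\}$, and factor by distributivity (the paper just runs the chain of equalities in the opposite direction, and likewise deduces (2) from (1) via $f\le g \iff f = f\wedge g$). Your explicit appeal to Theorem~\ref{thm: morphisms are l-algebra morphisms} to justify that $\alpha$ preserves meets is a point the paper leaves implicit, and is a welcome clarification.
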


\begin{proof}
(1) Since $\alpha : D[B]^* \to D[B]^\flat$ is a bijection, there are $s, t \in D[B]^*$ with $f = \alpha(s)$ and $g = \alpha(t)$. By Remark~\ref{order remark},
$(s \wedge t)(a) = \displaystyle{\bigvee} \{ s(b) \wedge t(c): {\min(b, c) = a}\}$. Therefore, 
\begin{align*}
f(a)\wedge g(a) &= \alpha(s)(a) \wedge \alpha(t)(a) = \left(\bigvee_{b_1 \ge a} s(b_1)\right) \wedge \left( \bigvee_{b_2 \ge a} t(b_2) \right) \\
&= \bigvee_{b_1,b_2 \ge a} s(b_1) \wedge t(b_2) = \bigvee_{b \ge a} \left(\bigvee_{\min(b_1, b_2) = b} s(b_1) \wedge t(b_2)\right) \\
&= \bigvee_{b \ge a} (s\wedge t)(b) = \alpha(s \wedge t)(a) = (f \wedge g)(a).
\end{align*}

(2) We have $f \leq g$ iff $f = f\wedge g$. Therefore, (2) follows from (1).
\end{proof}

Let $S$ be a Specker $D$-algebra and $B = \Id(S)$. Since $(-)^\flat : S \to D[B]^\flat$ is an $\ell$-algebra isomorphism by Proposition~\ref{prop: relation between flat and perp}, the following is an immediate consequence of Lemma~\ref{* arithmetic}.

\begin{lemma} \label{lem: * arithmetic}
Let $S$ be a Specker $D$-algebra. For $s, t \in S$ we have:
\begin{enumerate}[$(1)$]
\item $(s\wedge t)^\flat(a) = s^\flat(a) \wedge t^\flat(a)$ for each $a \in D$. 
\item $s \leq t$ iff $s^\flat(a) \leq t^\flat(a)$ for each $a \in D$.
\end{enumerate}
\end{lemma}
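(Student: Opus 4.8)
The plan is to transport Lemma~\ref{* arithmetic} from $D[B]^\flat$ to $S$ along the isomorphism $(-)^\flat$. By Proposition~\ref{prop: relation between flat and perp}, the map $(-)^\flat : S \to D[B]^\flat$ is an $\ell$-algebra isomorphism, so in particular it preserves finite meets and reflects and preserves the underlying order. Both statements then reduce to the corresponding facts for $D[B]^\flat$ already established in Lemma~\ref{* arithmetic}.

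For part (1), I would first use that $(-)^\flat$ preserves meets to write $(s \wedge t)^\flat = s^\flat \wedge t^\flat$, where the meet on the right is computed in $D[B]^\flat$. Applying Lemma~\ref{* arithmetic}(1) to $f = s^\flat$ and $g = t^\flat$ gives $(s^\flat \wedge t^\flat)(a) = s^\flat(a) \wedge t^\flat(a)$ for each $a \in D$. Chaining these two equalities yields $(s \wedge t)^\flat(a) = s^\flat(a) \wedge t^\flat(a)$, as desired.

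For part (2), I would use that an $\ell$-algebra isomorphism is an order isomorphism: since $s \le t$ iff $s \wedge t = s$, and $(-)^\flat$ is a meet-preserving bijection, we have $s \le t$ in $S$ iff $s^\flat \le t^\flat$ in $D[B]^\flat$. Lemma~\ref{* arithmetic}(2) then converts the latter into the pointwise condition $s^\flat(a) \le t^\flat(a)$ for each $a \in D$, completing the equivalence.

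There is no genuine obstacle here, which is precisely why the text can call this an immediate consequence of Lemma~\ref{* arithmetic}; the whole argument is a routine transfer along the isomorphism. The only point requiring a moment's care is confirming that $(-)^\flat$ preserves the order in both directions, but this is immediate from the characterization $s \le t \iff s \wedge t = s$ together with the fact that both $(-)^\flat$ and its inverse are $\ell$-algebra homomorphisms (hence monotone and meet-preserving), which is guaranteed by Proposition~\ref{prop: relation between flat and perp} and Theorem~\ref{thm: morphisms are l-algebra morphisms}.
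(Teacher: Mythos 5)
Your proof is correct and follows essentially the same route as the paper: the paper likewise derives this lemma as an immediate consequence of Lemma~\ref{* arithmetic}, using that $(-)^\flat : S \to D[\Id(S)]^\flat$ is an $\ell$-algebra isomorphism by Proposition~\ref{prop: relation between flat and perp}. Your filling in of the routine transfer (meet preservation for (1), order reflection via $s \le t \iff s \wedge t = s$ for (2)) is exactly the intended argument.
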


\begin{remark}
In contrast to Lemma~\ref{lem: * arithmetic}(2), as we observed in Remark~\ref{order remark}, it is not the case that $s \leq t$ iff $s^\perp(a) \leq t^\perp(a)$ for all $a \in D$. 
\end{remark}

The next technical lemma is needed in Proposition~\ref{prop:4.9}.

\begin{lemma}\label{* properties}
Let $S$ be a Specker $D$-algebra.
\begin{enumerate}[$(1)$]
\item Let $s \in S$ and let $a,b \in D$ with $a < b$. If
$s^\perp(c) =0$ for all $c$ with 
 $a < c < b$, then $$(s \wedge b) - (s \wedge a) =  [(s-a) \wedge(b-a)] \vee 0 = (b-a)s^\flat(b).$$
\item Let $s,t \in S$. Then there exist $a_0 < \cdots < a_n$ in $D$ with $a_0\le s,t\le a_n$ such that $s$ and $t$ have compatible decreasing decompositions $s = a_0 + \sum_{i=1}^n (a_i-a_{i-1})s^\flat(a_i)$ and $t = a_0 + \sum_{i=1}^n (a_i-a_{i-1})t^\flat(a_i)$. Moreover, if $s,t \ge 0$, then we may assume $a_0 = 0$.
\end{enumerate}
\end{lemma}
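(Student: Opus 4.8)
The plan is to prove both parts by transporting everything through the isomorphism $(-)^\flat : S \to D[B]^\flat$ of Proposition~\ref{prop: relation between flat and perp}, where by Lemma~\ref{lem: * arithmetic} the order and the lattice operations are computed pointwise. This reduces each identity to an elementary statement about the decreasing step functions $s^\flat, t^\flat : D \to B$, which I would analyze using the formula $s^\flat = \alpha(s^\perp)$, that is $s^\flat(x) = \bigvee\{s^\perp(z) : z \ge x\}$.

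For part (1), I would first dispose of the left equality $(s \wedge b) - (s \wedge a) = [(s-a)\wedge(b-a)]\vee 0$, which holds in any $\ell$-ring and needs no hypothesis. Setting $u = s - a$ and using translation-invariance of $\wedge$, one has $(s\wedge b) - a = u \wedge (b-a)$ and $(s\wedge a) - a = u \wedge 0$; since $b - a \ge 0$ forces $(u \wedge (b-a)) \wedge 0 = u \wedge 0$, the claim reduces to the identity $x - (x \wedge 0) = x \vee 0$ (with $x = u \wedge (b-a)$), which follows from distributivity of $+$ over $\vee$ in the $\ell$-ring $S$ (Theorem~\ref{order prop}). The hypothesis enters only in the second equality, which I would verify pointwise after applying $(-)^\flat$. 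Using the shift identity $(s-a)^\flat(x) = s^\flat(x+a)$ (immediate from $(s-a)^\perp(y) = s^\perp(y+a)$ and the definition of $\alpha$), the pointwise meet of $(s-a)^\flat$ against the constant $b-a$ (Lemma~\ref{lem: * arithmetic}(1)), and the fact that joins, like meets, are computed pointwise (the pointwise join of elements of $D[B]^\flat$ again lies in $D[B]^\flat$, and the order is pointwise by Lemma~\ref{* arithmetic}(2)), one computes that $([(s-a)\wedge(b-a)]\vee 0)^\flat(x)$ equals $1$ for $x \le 0$, equals $s^\flat(x+a)$ for $0 < x \le b-a$, and equals $0$ for $x > b-a$. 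By contrast, the decreasing form of $(b-a)s^\flat(b)$, read off from Remark~\ref{rem: idempotents of D[B]flat} and the scalar rule of Theorem~\ref{* axiomitization}(2), has $\flat$-value $1$ for $x \le 0$, $s^\flat(b)$ for $0 < x \le b-a$, and $0$ for $x > b-a$. The two agree once $s^\flat(x+a) = s^\flat(b)$ for $0 < x \le b-a$, i.e. $s^\flat$ is constant on $(a,b]$; and this is exactly what the hypothesis gives, since for $a < y \le b$ the join $\bigvee\{s^\perp(z) : z \ge y\}$ is unaffected by the vanishing of $s^\perp$ on $(a,b)$ and hence equals $s^\flat(b)$. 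Equality of the two elements then follows from Lemma~\ref{lem: * arithmetic}(2).

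For part (2), the idea is that $s^\flat$ and $t^\flat$ are decreasing step functions, each with finitely many jumps, so a single sufficiently fine chain of break points represents both. The key tool is a refinement principle extending Proposition~\ref{decr dec}(2) from the minimal chain to arbitrary refinements: if $a_0 < \cdots < a_n$ is any chain on which a given $f \in D[B]^\flat$ is constant on each $(a_{i-1},a_i]$, is $1$ on $(-\infty,a_0]$, and is $0$ on $(a_n,\infty)$, then the element with $\flat$-image $f$ equals $a_0 + \sum_{i=1}^n (a_i-a_{i-1})f(a_i)$. I would justify this by noting that inserting a point inside a constancy interval splits a term $(a_k-a_{k-1})f(a_k)$ into two terms carrying the same idempotent and leaves the sum unchanged, that lowering $a_0$ through the region where $f=1$ adds terms $(\cdot)\cdot 1$ absorbed into the base, and that raising $a_n$ through the region where $f=0$ adds terms $(\cdot)\cdot 0 = 0$. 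Granting this, I would pick $a_0$ below both base values of $s$ and $t$ (so $a_0 \le s,t$ and $s^\flat=t^\flat=1$ on $(-\infty,a_0]$) and $a_n$ above both (so $s,t\le a_n$ and $s^\flat=t^\flat=0$ on $(a_n,\infty)$), and let $a_1<\cdots<a_{n-1}$ exhaust all jumps of $s^\flat$ and of $t^\flat$ in $(a_0,a_n)$; finiteness of the jump sets makes this a finite chain on which both $s^\flat$ and $t^\flat$ are constant on each $(a_{i-1},a_i]$. Applying the refinement principle to each yields the two compatible decompositions. For the final assertion, if $s,t\ge 0$ then $0\le s,t$, so $s^\flat=t^\flat=1$ on $(-\infty,0]$ by Lemma~\ref{lem: * arithmetic}(2) and I may take $a_0 = 0$.

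I expect the main obstacle to be the bookkeeping in the second equality of part (1): computing the $\flat$ of $[(s-a)\wedge(b-a)]\vee 0$ correctly across the three regimes $x\le 0$, $0<x\le b-a$, and $x>b-a$, and confirming that the join is evaluated pointwise. The remaining delicate point is verifying the refinement principle cleanly, so that the telescoping of equal consecutive idempotent values and the merging of intervals where $s^\flat$ and $t^\flat$ agree are transparent. Neither is conceptually deep once the pointwise picture of $D[B]^\flat$ is in place, but both demand care with the boundary cases.
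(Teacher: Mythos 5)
Your proof is correct, but it takes a genuinely different route from the paper's in both halves. For (1), the paper imports the first equality from \cite[Claim~6.8]{BMMO15b} and obtains the second by pure orthogonal-decomposition algebra: it splits $s = \sum_{c\le a} c\, s^\perp(c) + \sum_{b\le c} c\, s^\perp(c)$ using the hypothesis, evaluates $s\wedge a$ and $s\wedge b$ via Claim~\ref{claim: max and min}, and subtracts. You instead prove the first equality from scratch as an $\ell$-group identity (this is valid: $x-(x\wedge 0)=x\vee 0$ applied to $x=(s-a)\wedge(b-a)$, together with $(b-a)\wedge 0 = 0$) and verify the second pointwise through $(-)^\flat$; the one point needing care is your appeal to joins being computed pointwise in $D[B]^\flat$, since Lemma~\ref{* arithmetic} records this only for meets and for the order. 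The fact is true --- the pointwise join of two elements of $D[B]^\flat$ is again a decreasing step function whose value-preimages have maxima, hence lies in $D[B]^\flat$, and pointwise order then makes it the least upper bound --- but it should be proved rather than asserted. For (2), the paper's argument is a telescoping sum that reuses part (1): from $a_0\le s\le a_n$ it writes $s-a_0=\sum_{i=1}^n\bigl((s\wedge a_i)-(s\wedge a_{i-1})\bigr)$ and applies (1) to each summand, so (2) is essentially a corollary of (1). Your refinement principle is logically independent of (1) and is also sound: any chain meeting your constancy conditions must contain every jump point of the function lying in $(a_0,a_n]$, hence arises from the minimal chain of Proposition~\ref{decr dec}(2) by finitely many of your three moves, each of which preserves the sum. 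The paper's route buys economy, since part (1) does double duty and no new combinatorial lemma is needed; yours buys a decoupling of the two parts and a reusable statement (arbitrary admissible chains compute the element), at the cost of the two small verifications noted above.
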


\begin{proof}
(1) The proof that $(s \wedge b) - (s \wedge a) =  [(s-a) \wedge(b-a)] \vee 0$ is given in \cite[Claim~6.8]{BMMO15b}.
We show that $(s\wedge b) - (s\wedge a) = (b-a)s^\flat(b)$. As discussed in Section~\ref{sec:specker}, we may write $s = \sum_{b\in D} b s^\perp(b)$. By assumption, $\sum_{a < c < b} c s^\perp(c) =0$, so
\[
s  = \sum_{c \le a} c  s^\perp(c) + \sum_{b \leq c} c  s^\perp(c).
\]
Because $\{ s^\perp(c) : c \in D\}$ is a set of orthogonal idempotents whose join is $1$ and $a,b \in D$, we have $a = \sum_{c \in D} as^\perp(c)$ and $b = \sum_{c \in D} bs^\perp(c)$. Therefore, by Claim~\ref{claim: max and min},
\[
s \wedge b = \sum_{c \le a} \func{min}(b,c)  s^\perp(c) + \sum_{b \leq c} \func{min}(b,c)  s^\perp(c) = \sum_{c \le a} c  s^\perp(c) + \sum_{b \leq c} b  s^\perp(c),
\]
while
\[
s \wedge a = \sum_{c \le a} \func{min}(a,c) s^\perp(c) + \sum_{b \leq c} \func{min}(a,c)  s^\perp(c) = \sum_{c \le a} c  s^\perp(c) + \sum_{b \leq c} a  s^\perp(c).
\]
Thus, as the $s^\perp(c)$ are orthogonal,
\[
(s \wedge b) - (s \wedge a) = (b-a)\sum_{b\le c}s^\perp(c)  = (b - a)  s^\flat(b). 
\]

(2) We first show that for each $s \in S$ there is $0 \le b \in D$ with $-b \le s \le b$. Write $s = \sum_{i=0}^n c_ie_i$ with $a_i \in D$ and $e_i \in \Id(S)$. Let $b_i = \max(c_i, -c_i)$ and $b = \sum_{i=0}^n b_i$. Since $0 \le e_i \le 1$ for each $i$, we have $-b_i \le c_i e_i \le b_i$, so $-b \le s \le b$. Therefore, there are $a_0, a_n \in D$ with $a_0 \le s, t \le a_n$. 

Since $s^\perp, t^\perp$ have finitely many nonzero values,
there are $a_1 ,\dots , a_{n-1}$ in $D$ such that $a_0 < a_1 < \cdots  < a_{n-1} < a_n$ and for each $a \notin \{a_0, \dots, a_n\}$, we have $s^\perp(a) = 0 = t^\perp(a)$. From $a_0 \le s \le a_n$ we get $(s \wedge a_n) - (s \wedge a_0) = s - a_0$. Thus, by (1),
\[
s - a_0 = \sum_{i=1}^n ((s \wedge a_i) - (s \wedge a_{i-1})) = \sum_{i=1}^n (a_i - a_{i-1})s^\flat(a_i).
\]
So $s = a_0 + \sum_{i=1}^n (a_i - a_{i-1})s^\flat(a_i)$, and a similar argument gives $t = a_0 + \sum_{i=1}^n (a_i - a_{i-1})t^\flat(a_i)$. Finally, if $s, t \ge 0$, then we may choose $a_0$ above to be 0.
\end{proof}

As we showed in Lemma~\ref{lem: * arithmetic}(2), if $s$ and $t$ are elements of a Specker $D$-algebra, then $s \leq t$ iff $s^\flat(a) \leq t^\flat(a)$ for all $a \in D$. 
We strengthen this in the next proposition and show that the analogous property holds for the proximity relation on a proximity 
 Specker $D$-algebra. The desire to have such a simple  functional interpretation of the proximity relation motivates our use of the $(-)^\flat$ representation  of a Specker $D$-algebra in place of the $(-)^\perp$ representation. 

\begin{proposition} \label{prop:4.9}
Let $(S,\lhd)$ be a proximity Specker $D$-algebra and let $s,t \in S$. Then $s \lhd t$ iff $s^\flat(b) \lhd t^\flat(b)$ for all $b \in D$.
\end{proposition}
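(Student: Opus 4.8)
The plan is to prove both implications by reducing the single relation $s \lhd t$ (respectively its consequences for the idempotent values) to finitely many proximity relations among the idempotents $s^\flat(a_i), t^\flat(a_i)$, using a common grid. By Lemma~\ref{* properties}(2) I would fix $a_0 < \cdots < a_n$ in $D$ with $a_0 \le s,t \le a_n$ and (as in the proof of that lemma, taking the grid to contain the finite supports of $s^\perp$ and $t^\perp$) with $s^\perp(c)=t^\perp(c)=0$ for every $c \notin \{a_0,\dots,a_n\}$, so that $s = a_0 + \sum_{i=1}^n (a_i-a_{i-1})\,s^\flat(a_i)$ and $t = a_0 + \sum_{i=1}^n (a_i-a_{i-1})\,t^\flat(a_i)$. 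Throughout I will use that constants satisfy $a \lhd a$ for every $a \in D$, that $\lhd$ is preserved by addition (P6), by scaling by a positive element in \emph{both} directions (P7), and --- as recorded in Remark~\ref{rem: prox def}(3) --- by finite meets and joins; and that on $B=\Id(S)$ the restriction of $\lhd$ is the de Vries proximity of Proposition~\ref{prop:4.6}, which is where the relation $s^\flat(b)\lhd t^\flat(b)$ takes place.

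For the easier right-to-left implication, assume $s^\flat(b)\lhd t^\flat(b)$ for all $b$. In particular $s^\flat(a_i)\lhd t^\flat(a_i)$ for each $i$, so since $a_i-a_{i-1}>0$ the first half of (P7) gives $(a_i-a_{i-1})\,s^\flat(a_i) \lhd (a_i-a_{i-1})\,t^\flat(a_i)$. Adjoining the trivial relation $a_0\lhd a_0$ and iterating (P6) over $i=1,\dots,n$ then yields $s \lhd t$.

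For the left-to-right implication, fix $b\in D$. If $b\le a_0$ then $s^\flat(b)=1=t^\flat(b)$, and if $b>a_n$ then $s^\flat(b)=0=t^\flat(b)$, so (P1) disposes of these cases. Otherwise $a_{i-1}<b\le a_i$ for a unique $i$; since $s^\perp,t^\perp$ vanish strictly between $a_{i-1}$ and $a_i$, both $s^\flat$ and $t^\flat$ are constant on $(a_{i-1},a_i]$, whence $s^\flat(b)=s^\flat(a_i)$ and $t^\flat(b)=t^\flat(a_i)$, and it suffices to prove $s^\flat(a_i)\lhd t^\flat(a_i)$. The crucial move is to express the $i$-th layer \emph{without subtraction}: Lemma~\ref{* properties}(1), applied with $a=a_{i-1}$ and $b=a_i$, gives $(a_i-a_{i-1})\,s^\flat(a_i) = [(s-a_{i-1})\wedge(a_i-a_{i-1})]\vee 0$ and the analogous identity for $t$. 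Now $s\lhd t$ together with $-a_{i-1}\lhd -a_{i-1}$ gives $s-a_{i-1}\lhd t-a_{i-1}$ by (P6); meeting with the constant $a_i-a_{i-1}$ and then joining with $0$ (using meet/join compatibility from Remark~\ref{rem: prox def}(3) together with $a_i-a_{i-1}\lhd a_i-a_{i-1}$ and $0\lhd 0$) propagates the relation to $(a_i-a_{i-1})\,s^\flat(a_i)\lhd(a_i-a_{i-1})\,t^\flat(a_i)$. Finally the second half of (P7) strips the positive scalar $a_i-a_{i-1}$, giving $s^\flat(a_i)\lhd t^\flat(a_i)$.

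I expect the main obstacle to be exactly this left-to-right implication, and specifically the extraction of a single layer of the decomposition in a proximity-compatible way. The naive attempt --- deduce $s\wedge a_i\lhd t\wedge a_i$ and $s\wedge a_{i-1}\lhd t\wedge a_{i-1}$ from meet-compatibility and then subtract --- fails, because (P5) reverses orientation: from $s\wedge a_{i-1}\lhd t\wedge a_{i-1}$ one obtains only $-(t\wedge a_{i-1})\lhd -(s\wedge a_{i-1})$, so (P6) produces the mismatched relation $(s\wedge a_i)-(t\wedge a_{i-1})\lhd(t\wedge a_i)-(s\wedge a_{i-1})$ rather than a difference of like terms. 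The purpose of invoking Lemma~\ref{* properties}(1) is precisely to rewrite the layer $(a_i-a_{i-1})\,s^\flat(a_i)$ as $[(s-a_{i-1})\wedge(a_i-a_{i-1})]\vee 0$, an expression built from $s$ alone by a constant shift, a meet, and a join --- all operations under which $\lhd$ is monotone --- thereby sidestepping subtraction altogether.
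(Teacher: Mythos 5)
Your proof is correct and takes essentially the same route as the paper's: both directions hinge on Lemma~\ref{* properties} --- part (2) together with (P6)/(P7) for the right-to-left implication, and part (1) plus propagating $\lhd$ through a constant shift, a meet, and a join (Remark~\ref{rem: prox def}(3)) followed by the scalar-cancellation half of (P7) for the left-to-right implication. The only cosmetic difference is that you reduce an arbitrary $b$ to a point of a fixed common grid (treating $b \le a_0$ and $b > a_n$ separately), whereas the paper chooses, for each $b$, some $a < b$ with $s^\perp$ and $t^\perp$ vanishing on the open interval between them; the substance is identical.
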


\begin{proof}
Let $s,t \in S$. We first show that $s\lhd t$ iff  
$[(s-a) \wedge b] \vee 0 \lhd [(t-a)\wedge b] \vee 0$ for all $a,b \in D$. 
First suppose that $s \lhd t$. By Remark~\ref{rem: prox def}(3), $a \lhd a$. Therefore, $s-a \lhd t-a$. Applying Remark~\ref{rem: prox def}(3) again, we first get $(s-a) \wedge b  \lhd (t-a) \wedge b$, and then that $[(s-a) \wedge b]\vee 0 \lhd [(t-a)\wedge b]\vee 0$. Conversely, as $S$ is bounded, there exist $a,b \in D$ with $a \le s,t \le a+b$. Therefore, $[(s-a) \wedge b]\vee 0 = s-a$ and $[(t-a) \wedge b]\vee 0 = t-a$. Thus, $s-a \lhd t - a$. Since $a \lhd a$, we conclude that $s \lhd t$.

Next, let $s \lhd t$ and $b \in D$. Choose $a < b$ so that if $a < c < b$, then $s^\perp(c) = 0 = t^\perp(c)$. By Lemma \ref{* properties}(1), $[(s-a)\wedge (b-a)]\vee 0 = (b-a)s^\flat(b)$ and $[(t-a)\wedge (b-a)] \vee 0 = (b - a)t^\flat(b)$. Consequently, by the previous paragraph, we have $(b-a)s^\flat(b) \lhd (b-a)t^\flat(b)$. Since $b - a > 0$, it follows from (P7) that $s^\flat(b) \lhd t^\flat(b)$.

Conversely, suppose that $s^\flat(b) \lhd t^\flat(b)$ for all $b \in D$. By Lemma \ref{* properties}(2), we may write $s = a_0 + \sum_{i=1}^n (a_i - a_{i-1})s^\flat(a_i)$ and $t = a_0 + \sum_{i=1}^n (a_i - a_{i-1})t^\flat(a_i)$ for appropriate $a_0 < \cdots < a_n$ in $D$. Since $\lhd$ preserves addition and scalar multiplication by nonnegative scalars, from these representations we conclude that $s \lhd t$.
\end{proof}

Let $S$ be a Specker $D$-algebra and $\prec$ be a de Vries proximity on $\func{Id}(S)$. Proposition~\ref{prop:4.9} suggests a way to lift  $\prec$ to a proximity  $\lhd$  on $S$.
 We will show in Corollary~\ref{cor:4.14} that the relation in the following definition is a proximity on $S$ and that it is the unique proximity extending~$\prec$. 

\begin{definition} \label{proximity_definition}
\begin{enumerate}
\item[]
\item Let $\prec$ be a de Vries proximity on a boolean algebra $B$. Define $\prec^\flat$ on $D[B]^\flat$ by $f \prec^\flat g$ if $f(b) \prec g(b)$ for each $b \in D$.
\item Let $S$ be a Specker $D$-algebra and let $\prec$ be a de Vries proximity on $\func{Id}(S)$. Define $\lhd$ on $S$ by $s \lhd t$ if $s^\flat \prec^\flat t^\flat$. 
\end{enumerate}
\end{definition}

\begin{theorem} \label{de Vries lift}
Let $\prec$ be a de Vries proximity on a boolean algebra $B$. Then $\prec^\flat$ is a proximity on $D[B]^\flat$, and is the unique proximity on $D[B]^\flat$ such that $e \prec f$ iff $e^\flat \prec^\flat f^\flat$ for each $e, f \in B$. 
\end{theorem}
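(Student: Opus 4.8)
The statement has two halves: (i) $\prec^\flat$ is a proximity on $D[B]^\flat$, meaning it satisfies axioms (P1)--(P10) of Definition~\ref{proximity definition}; and (ii) $\prec^\flat$ is the \emph{unique} proximity on $D[B]^\flat$ whose restriction to idempotents, under the identification $e\mapsto e^\flat$ of Remark~\ref{rem: idempotents of D[B]flat}, recovers $\prec$. My strategy is to verify the ten axioms essentially pointwise, transferring each de Vries axiom for $\prec$ on $B$ through the defining equivalence $f\prec^\flat g \iff f(b)\prec g(b)$ for all $b\in D$, and to leverage Lemma~\ref{* arithmetic} (the order on $D[B]^\flat$ is computed pointwise) so that the order-theoretic axioms (P2), (P3) become immediate. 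For uniqueness I would use Proposition~\ref{prop:4.9}, which already pins down \emph{any} proximity by its behavior on the functional values $f^\flat(b)$.

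\textbf{Verifying the axioms.} First I would record the translation dictionary. By Remark~\ref{rem: idempotents of D[B]flat}, the idempotents $0^\flat,1^\flat$ are the $0,1$ of $D[B]^\flat$, and $0^\flat(b),1^\flat(b)\in\{0,1\}\subseteq B$ for every $b$; so (P1) follows from the de Vries reflexivity $0\prec 0$, $1\prec 1$ applied value-by-value. Axiom (P2) ($f\prec^\flat g\Rightarrow f\le g$) is immediate from Lemma~\ref{* arithmetic}(2) together with the de Vries axiom $e\prec e'\Rightarrow e\le e'$ applied at each $b$. For (P3) I would combine Lemma~\ref{* arithmetic}(2) (pointwise order) with the de Vries interpolation-free monotonicity $e\le e'\prec r'\le u'\Rightarrow e\prec u'$. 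Axiom (P4) is the statement that $\prec$ is preserved by finite meets, and since $(f\wedge g)(b)=f(b)\wedge g(b)$ by Lemma~\ref{* arithmetic}(1), it reduces to the de Vries axiom $e\prec e',e''\Rightarrow e\prec e'\wedge e''$ at each $b$. The axioms (P9), (P10) are interpolation and the ``rather below'' refinement; since $D[B]^\flat$ is determined by finitely many idempotent values and these vary over only finitely many $b\in D$ for any given $f,g$, I would produce the interpolant $r$ by applying de Vries interpolation finitely many times to the distinct values $f(b)\prec g(b)$ and reassembling a decreasing function---here one must check the reassembled $r$ is genuinely monotone and lands in $D[B]^\flat$, which is where care is needed.

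\textbf{The algebraic axioms and the main obstacle.} The genuinely substantive work is (P5)--(P8), the axioms governing the interaction of $\prec^\flat$ with negation, addition, scalar multiplication, and products, because the operations on $D[B]^\flat$ are \emph{not} pointwise---they are given by the join formulas of Theorem~\ref{* axiomitization}. So I cannot simply read off $(-f)(b)$ or $(f+g)(b)$ from the pointwise values of $f$ and $g$. The cleanest route is to transport everything back through the isomorphism: I would use Proposition~\ref{prop: relation between flat and perp} to realize $D[B]^\flat$ as $S^\flat$ for $S=D[B]$, and use Proposition~\ref{prop:4.9}, which says that for \emph{any} proximity $\lhd$ on $S$ one has $s\lhd t\iff s^\flat(b)\lhd t^\flat(b)$ for all $b$. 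However Proposition~\ref{prop:4.9} presupposes a proximity, so I cannot invoke it circularly to prove $\prec^\flat$ is one. Instead, the honest obstacle is establishing (P6)--(P8) directly: for (P6) I must show that if $f(b)\prec g(b)$ and $f'(b)\prec g'(b)$ pointwise, then $(f+f')(c)\prec(g+g')(c)$ for each $c$, where $(f+f')(c)=\bigvee\{f(b_1)\wedge f'(b_2):b_1+b_2\ge c\}$; this requires the de Vries axiom that $\prec$ is preserved under finite joins and meets (Remark~\ref{rem: prox def}(3) for $\prec$ on $B$) applied to the join formula, checking the term-by-term proximities and then that $\prec$ respects the join. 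This is the step I expect to be most delicate, since one must control the join over the (finite, by boundedness) index set and confirm the resulting value-proximity. Axiom (P7) follows from the scalar formula in Theorem~\ref{* axiomitization}(2) by the same join argument, and (P8) from the product formula in (3), restricted to $f,g\ge 0$.

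\textbf{Uniqueness.} Finally, suppose $\lhd$ is any proximity on $D[B]^\flat$ agreeing with $\prec$ on idempotents via $e\prec f\iff e^\flat\lhd f^\flat$. By Proposition~\ref{prop:4.9} applied to $\lhd$, we have $f\lhd g\iff f^\flat(b)\,\lhd\,g^\flat(b)$ for all $b$; but each $f^\flat(b)=f(b)$ is itself an idempotent, so the right-hand side reads $f(b)\prec g(b)$ for all $b$, which is exactly the definition of $f\prec^\flat g$. Hence $\lhd\;=\;\prec^\flat$, giving uniqueness and simultaneously confirming that $\prec^\flat$ restricts correctly on idempotents (the value $e^\flat(b)$ being $1$, $e$, or $0$, so $e^\flat\prec^\flat f^\flat\iff e\prec f$ by the de Vries reflexivity of $0\prec0$, $1\prec1$).
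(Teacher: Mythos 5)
Your proposal is correct and follows essentially the same route as the paper's proof: pointwise verification of (P1)--(P4) via Lemma~\ref{* arithmetic}, the join formulas of Theorem~\ref{* axiomitization} together with preservation of finite meets and joins by $\prec$ (with the joins effectively finite) for (P5)--(P8), interpolation on the finitely many values with a decreasing reassembly for (P9)--(P10), and Proposition~\ref{prop:4.9} applied to an arbitrary competing proximity for uniqueness. You also correctly identify and avoid the potential circularity in using Proposition~\ref{prop:4.9}, exactly as the paper does by reserving it for the uniqueness claim only.
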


\begin{proof}
The proofs of (P1)--(P4) are straightforward. Among the axioms (P5)--(P8), we only verify (P6) since the other axioms follow along similar lines.

(P6) Suppose that $s, t, r, u \in D[B]^\flat$ with $s \prec^\flat t$ and $r \prec^\flat u$. Let $a \in D$. By Theorem~\ref{* axiomitization}(1),
\begin{align*}
(s+r)(a) &= \bigvee_{b_1 + b_2 \ge a} s(b_1) \wedge r(b_2) \\
(t+u)(a) &= \bigvee_{b_1 + b_2 \ge a} t(b_1) \wedge u(b_2)
\end{align*}
Because $\prec$ preserves finite meets and joins (see Remark~\ref{rem: prox def}(3)), it follows that $s+r \prec^\flat t+u$.

(P9) Let $s\prec^\flat t$. By Lemma \ref{* properties}(2), there are $a_0 < \cdots < a_n$ in $D$ and decreasing $e_i, f_i \in B$ with $s(a) = e_i$ and $t(a) = f_i$ for $a_{i-1} < a \le a_i$. From $s \prec^\flat t$ it follows that $e_i \prec f_i$ for each $i$. Since $\prec$ is a de Vries proximity, there is $g_i \in B$ with $e_i \prec g_i \prec f_i$ for each $i$. As the $e_i$ and $f_i$ decrease and $\prec$ preserves finite meets, without loss of generality we may assume that the $g_i$ decrease. Define $r \in D[B]^\flat$ by $r(a) = g_i$ when $a_{i-1} < a \le a_i$. Then $s(a) \prec r(a) \prec t(a)$ for each $a \in D$. Thus, $s \prec^\flat r \prec^\flat t$.

(P10) Let $0 < s$. By Lemma \ref{* properties}(2), there are $a_0 < \cdots < a_n$ in $D$ and $1=f_0>f_1>\cdots>f_n>0$ in $B$ with $s(a) = f_i$ for $a_{i-1} < a \le a_i$. Since $s > 0$ we may assume that $a_0 = 0$. Since $\prec$ is a de Vries proximity, there is $0 < e \in B$ with $e \prec f_n$. Define $t \in D[B]^\flat$ by $t(a) = 1$ if $a \le 0$, $t(a) = e$ if $0 < a \le a_n$, and $t(a) = 0$ if $a_n < a$. Then $t(a) \prec s(a)$ for each $a\in D$, so $t \prec^\flat s$. Also, by Remark~\ref{rem: idempotents of D[B]flat} and Lemma~\ref{lem: * arithmetic}(2), $0 < t$.

Finally, for $e, f \in B$, it follows from Remark~\ref{rem: idempotents of D[B]flat}\ and Proposition~\ref{prop:4.9} that $e \prec f$ iff $e^\flat \prec^\flat f^\flat$, and that $\prec^\flat$ is the unique proximity on $D[B]^\flat$ satisfying this property.
\end{proof}

\begin{corollary} \label{cor:4.14}
Let $S$ be a Specker $D$-algebra and let $\prec$ be a de Vries proximity on $\func{Id}(S)$. If $\lhd$ is the extension of $\prec$ to $S$ given in Definition~\emph{\ref{proximity_definition}(2)}, then $\lhd$ is a proximity on $S$. Furthermore, $\lhd$ is the unique extension of $\prec$ to a proximity on $S$. Consequently, there is a 1-1 correspondence between proximities on $S$ and de Vries proximities on $\Id(S)$.
\end{corollary}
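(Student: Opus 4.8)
The plan is to reduce the whole statement to the corresponding facts about $D[B]^\flat$, where $B=\Id(S)$, transported along the $\ell$-algebra isomorphism $(-)^\flat\colon S\to D[B]^\flat$ of Proposition~\ref{prop: relation between flat and perp}. The existence half is then essentially transport of structure. Each proximity axiom (P1)--(P10) is phrased purely in terms of the $\ell$-algebra data $0,1,\le,\wedge,+,-$, scalar multiplication by elements of $D$, and multiplication, with the existential axioms (P9)--(P10) transferring precisely because $(-)^\flat$ is a bijection. Hence the relation $\lhd$ of Definition~\ref{proximity_definition}(2), namely $s\lhd t$ iff $s^\flat\prec^\flat t^\flat$, is a proximity on $S$ exactly because $\prec^\flat$ is a proximity on $D[B]^\flat$ by Theorem~\ref{de Vries lift}. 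That same theorem records $e\prec f$ iff $e^\flat\prec^\flat f^\flat$ for all $e,f\in B$; since $e^\flat$ is the image of the idempotent $e$ under $(-)^\flat$ (Remark~\ref{rem: idempotents of D[B]flat}), this says exactly that $\lhd$ restricts to $\prec$ on $\Id(S)$, so $\lhd$ is an extension of $\prec$.

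For uniqueness I would invoke Proposition~\ref{prop:4.9}, which is the crucial input. Let $\lhd'$ be any proximity on $S$ whose restriction to $\Id(S)$ equals $\prec$. Applying Proposition~\ref{prop:4.9} to $(S,\lhd')$ gives, for all $s,t\in S$, that $s\lhd' t$ iff $s^\flat(b)\lhd' t^\flat(b)$ for every $b\in D$. But each $s^\flat(b)$ and $t^\flat(b)$ lies in $\Id(S)$, where $\lhd'$ agrees with $\prec$; hence $s\lhd' t$ iff $s^\flat(b)\prec t^\flat(b)$ for all $b$, which by Definition~\ref{proximity_definition}(1) is exactly $s^\flat\prec^\flat t^\flat$, i.e. $s\lhd t$. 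Thus $\lhd'=\lhd$, so $\prec$ has a unique extension.

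Finally, the $1$--$1$ correspondence follows by assembling the two directions. Restriction to idempotents sends a proximity on $S$ to a de Vries proximity on $\Id(S)$ by Proposition~\ref{prop:4.6}, while the construction above sends a de Vries proximity $\prec$ on $\Id(S)$ to its unique extension $\lhd$. These are mutually inverse: starting from $\prec$ and restricting its extension returns $\prec$ by the extension property just established, and starting from a proximity $\lhd$ on $S$, its restriction $\prec:=\lhd|_{\Id(S)}$ is a de Vries proximity whose unique extension must be $\lhd$ itself. The real content lies not in this bookkeeping but in the two facts it rests on: that every proximity on $S$ is determined pointwise in the $(-)^\flat$ representation by its behavior on idempotents (Proposition~\ref{prop:4.9}), which forces uniqueness, and that $\prec^\flat$ genuinely satisfies all the proximity axioms (Theorem~\ref{de Vries lift}). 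Once these are in hand, the main thing to get right is the careful transfer of the axioms through $(-)^\flat$ and the verification that the two constructions are inverse.
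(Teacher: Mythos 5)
Your proposal is correct and takes essentially the same route as the paper: existence is transport of structure along the $\ell$-algebra isomorphism $(-)^\flat$ of Proposition~\ref{prop: relation between flat and perp} combined with Theorem~\ref{de Vries lift}, and uniqueness rests on Proposition~\ref{prop:4.9}. The only cosmetic difference is that you apply Proposition~\ref{prop:4.9} directly to a competing proximity $\lhd'$ on $S$, whereas the paper cites the uniqueness clause of Theorem~\ref{de Vries lift}, whose own proof is exactly that application of Proposition~\ref{prop:4.9}.
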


\begin{proof}
Let $S$ be a Specker $D$-algebra and let $\prec$ be a de Vries proximity on $\func{Id}(S)$. By Proposition~\ref{prop: relation between flat and perp}, $(-)^\flat:S\to D[\Id(S)]^\flat$ is an $\ell$-algebra isomorphism. Moreover, for each $s, t \in S$, we have $s \lhd t$ iff $s^\flat \prec^\flat t^\flat$. Therefore, by Theorem~\ref{de Vries lift}, $\lhd$ is the unique proximity on $S$ extending $\prec$. \end{proof}

We recall (see, e.g., \cite[Def.~7.45]{Lam99}) that a commutative ring $R$ is a \emph{Baer ring} if 
for each ideal $I$ of $R$ the annihilator $\{r \in R : rs=0 \ \forall s \in I\}$ of $I$ is a principal ideal generated by an idempotent. By \cite[Cor.~4.4]{BMMO15a}, a Specker $D$-algebra $S$ is a Baer ring iff $\Id(S)$ is a complete boolean algebra. Thus, if $(B,\prec)$ is a de Vries algebra, then $(D[B]^\flat,\prec^\flat)$ is a proximity Specker $D$-algebra and $D[B]^\flat$ is a Baer ring.

\begin{definition}
Let $S$ be a Specker $D$-algebra. If $S$ is a Baer ring, then we call $S$ a {\em Baer-Specker $D$-algebra}. If in addition $\lhd$ is a proximity on $S$, then we call $(S,\lhd)$ a {\em proximity Baer-Specker $D$-algebra}. 
\end{definition} 

We are ready to give a choice-free definition of de Vries powers of $D$.  

\begin{definition}
Let $D$ be a totally ordered domain and $(B, \prec)$ a de Vries algebra. The {\em de Vries power of $D$ by $(B,\prec)$} is the proximity $D$-algebra $(D[B]^\flat, \prec^\flat)$.
\end{definition}

The next theorem shows that  de Vries powers of $D$ are exactly the proximity Baer-Specker $D$-algebras. It was first proved in \cite[Thm.~4.10, Cor.~5.6]{BMMO15b} using choice. Our proof here is choice-free.

\begin{theorem} \label{thm: de Vries power}
\begin{enumerate}[$(1)$]
\item[]
\item If $(B, \prec)$ is a de Vries algebra, then $(D[B]^\flat, \prec^\flat)$ is a proximity Baer-Specker $D$-algebra. 
\item If $(S, \lhd)$ is a proximity Baer-Specker $D$-algebra and $B = \Id(S)$, then $(-)^\flat : S \to D[B]^\flat$ is an $\ell$-algebra isomorphism such that $s \lhd t$ iff $s^\flat \prec^\flat t^\flat$.
\end{enumerate}
\end{theorem}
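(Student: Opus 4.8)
The plan is to establish the two parts of Theorem~\ref{thm: de Vries power} by assembling the machinery already developed, with the genuinely new content lying in verifying the Baer property and the proximity correspondence. For part~(1), I would argue as follows. Given a de Vries algebra $(B,\prec)$, Theorem~\ref{* axiomitization} together with Proposition~\ref{prop: relation between flat and perp} shows that $D[B]^\flat$ is a Specker $D$-algebra. Since $(B,\prec)$ is a de Vries algebra, $B$ is a complete boolean algebra; by Remark~\ref{rem: idempotents of D[B]flat}, $\Id(D[B]^\flat)$ is isomorphic to $B$ and is therefore complete as well. Invoking the cited result \cite[Cor.~4.4]{BMMO15a} that a Specker $D$-algebra is a Baer ring iff its idempotents form a complete boolean algebra, I conclude $D[B]^\flat$ is a Baer-Specker $D$-algebra. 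Finally, Theorem~\ref{de Vries lift} guarantees that $\prec^\flat$ is a proximity on $D[B]^\flat$, so $(D[B]^\flat,\prec^\flat)$ is a proximity Baer-Specker $D$-algebra, as desired.

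For part~(2), I would proceed by first handling the $\ell$-algebra isomorphism and then the proximity statement. Let $(S,\lhd)$ be a proximity Baer-Specker $D$-algebra and set $B=\Id(S)$. By Proposition~\ref{prop: relation between flat and perp}, the map $(-)^\flat:S\to D[B]^\flat$ is an $\ell$-algebra isomorphism; this part requires no extra work beyond citing the proposition. For the proximity equivalence, the restriction of $\lhd$ to $\Id(S)$ is a de Vries proximity $\prec$ by Proposition~\ref{prop:4.6}. The key point is then to identify the abstract proximity $\lhd$ on $S$ with the extension produced by the lifting construction. By Corollary~\ref{cor:4.14}, there is a unique proximity on $S$ restricting to $\prec$ on $\Id(S)$, and that unique proximity is precisely the $\lhd$ of Definition~\ref{proximity_definition}(2). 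Since the given $\lhd$ restricts to $\prec$ on idempotents, uniqueness forces the given $\lhd$ to coincide with the lifted one, and hence by definition $s\lhd t$ iff $s^\flat\prec^\flat t^\flat$ for all $s,t\in S$.

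The step I expect to require the most care is the identification in part~(2) of the ambient proximity $\lhd$ with the lifted proximity, i.e.\ the application of the uniqueness clause of Corollary~\ref{cor:4.14}. One must be careful that the de Vries proximity $\prec$ obtained by restricting $\lhd$ to $B=\Id(S)$ is the same $\prec$ that is fed into Definition~\ref{proximity_definition}(2) to produce the lifted relation, so that the two relations really are both extensions of the \emph{same} de Vries proximity and uniqueness can be applied. Everything else is a matter of stringing together Theorem~\ref{* axiomitization}, Proposition~\ref{prop: relation between flat and perp}, Theorem~\ref{de Vries lift}, Proposition~\ref{prop:4.6}, and the cited Baer characterization, all of which are choice-free, so the overall argument remains choice-free, which is the whole point of the paper.
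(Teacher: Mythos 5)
Your proposal is correct and follows essentially the same route as the paper: part (1) is identical (Theorem~\ref{* axiomitization}, Remark~\ref{rem: idempotents of D[B]flat}, the Baer criterion from \cite[Cor.~4.4]{BMMO15a}, and Theorem~\ref{de Vries lift}), and part (2) differs only in that you identify the given $\lhd$ with the lifted proximity via the uniqueness clause of Corollary~\ref{cor:4.14}, whereas the paper cites Proposition~\ref{prop:4.9} directly. Since Corollary~\ref{cor:4.14} is itself an immediate repackaging of Proposition~\ref{prop:4.9} and Theorem~\ref{de Vries lift}, the two arguments have the same mathematical content, and your version (including the care you take that both relations extend the \emph{same} restricted proximity $\prec$) is equally choice-free.
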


\begin{proof}
(1) Let $(B, \prec)$ be a de Vries algebra. By Theorem~\ref{* axiomitization}, $D[B]^\flat$ is a Specker $D$-algebra, and $B \cong \Id(D[B]^\flat)$ by Remark~\ref{rem: idempotents of D[B]flat}. Since $B$ is complete, $D[B]^\flat$ is a Baer-Specker $D$-algebra. Thus, the de Vries power $(D[B]^\flat, \prec^\flat)$ is a proximity Baer-Specker $D$-algebra by Theorem~\ref{de Vries lift}.

(2) Let $(S, \lhd)$ be a proximity Baer-Specker algebra and $B = \Id(S)$. The restriction $\prec$ of $\lhd$ to $B$ is a de Vries proximity by Proposition~\ref{prop:4.6}. Since $S$ is Baer, $B$ is a complete boolean algebra. Therefore, $(B, \prec)$ is a de Vries algebra, and so $(D[B]^\flat, \prec^\flat)$ is a de Vries power and $(-)^\flat : S \to D[B]^\flat$ is an $\ell$-algebra isomorphism by Proposition~\ref{prop: relation between flat and perp}. Moreover, $s \lhd t$ iff $s^\flat \prec^\flat t^\flat$ by Proposition~\ref{prop:4.9}. 
\end{proof}

\section{De Vries algebras and proximity Baer-Specker algebras} \label{sec:7}

In this final section we extend the correspondence of Section~\ref{sec:lifting} between de Vries proximities on boolean algebras and proximities on Specker $D$-algebras to a categorical equivalence between the category $\dev$ of de Vries algebras and the category $\PBSp_D$ of proximity Baer Specker $D$-algebras. As we pointed out in the introduction, this equivalence follows from de Vries duality between $\dev$ and $\KHaus$ and the duality of \cite{BMMO15b} between $\PBSp_D$ and $\KHaus$. However, the proof requires going through $\KHaus$ and hence is not choice-free. We give a purely algebraic  choice-free proof of this equivalence.

\begin{definition} \label{def:5.1} \cite[Def.~6.4]{BMMO15b}
Let $(S,\lhd)$ and $(T,\lhd)$ be proximity Baer-Specker $D$-algebras. A {\em proximity morphism} is a map $\alpha : S \to T$ satisfying
\begin{enumerate}
\item[(M1)] $\alpha(0) = 0$.
\item[(M2)] $\alpha(s \wedge t) = \alpha(s) \wedge \alpha(t)$.
\item[(M3)] $s \lhd t$ implies $-\alpha(-s) \lhd \alpha(t)$.
\item[(M4)] $\alpha(t)=\bigvee\{\alpha(s) : s \lhd t\}$.
\item[(M5)] $s \in S$ and $a \in D$ imply $\alpha(s + a) = \alpha(s) + a$.
\item[(M6)] $s \in S$ and $0\le a \in D$ imply $\alpha(as) = a\alpha(s)$.
\item[(M7)] $s \in S$ and $a \in D$ imply $\alpha(s \vee a) = \alpha(s) \vee a$. 
\end{enumerate}
\end{definition}

\begin{remark}
\begin{enumerate}
\item[]
\item It is immediate from (M1) and (M5) that $\alpha(a) = a$ for each $a \in D$.
\item The reading of axiom (M4) should be that the least upper bound of $\{\alpha(s) : s \lhd t\}$ exists and is equal to $\alpha(t)$.  
\item The axioms (M1)--(M4) are direct analogues of the corresponding axioms for de Vries morphisms, while the axioms (M5)--(M7) govern the behavior of proximity morphisms with respect to addition, multiplication, and join by a scalar. 
\end{enumerate}
\end{remark}

It was proved in \cite[Prop.~6.6]{BMMO15b} in a choice-dependent way that a proximity morphism between proximity Baer-Specker $D$-algebras restricts to a de Vries morphism between the de Vries algebras of idempotents. To give a choice-free proof, we require the following lemma, which gives a strictly order-theoretic characterization of idempotents in an $f$-ring. 

\begin{lemma} \label{lem: idempotent characterization}
Let $A$ be an $f$-ring and $e \in A$.  Then $e \in \Id(A)$ iff $e = 2e \wedge 1$.
\end{lemma}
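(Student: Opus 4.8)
The plan is to reduce both implications to the single translation identity $2e\wedge 1-e=(2e-e)\wedge(1-e)=e\wedge(1-e)$, which holds in any $\ell$-group and hence in the additive group of $A$. Consequently $e=2e\wedge 1$ holds if and only if $e\wedge(1-e)=0$, and the entire lemma reduces to the assertion that, for $e$ lying between $0$ and $1$, the idempotent identity $e(1-e)=0$ is equivalent to the disjointness $e\wedge(1-e)=0$. After this reduction I would only need to pass between the multiplicative condition $e^2=e$ and the lattice condition $e\wedge(1-e)=0$, taking care to first secure the bounds $0\le e\le 1$ in each direction.

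For the forward direction, assume $e\in\Id(A)$. Since squares are positive in every $f$-ring \cite{Bir79}, both $e=e^2\ge 0$ and $1-e=(1-e)^2\ge 0$, so $0\le e\le 1$. Put $x=e\wedge(1-e)\ge 0$. Multiplying $x\le e$ by $1-e\ge 0$ and $x\le 1-e$ by $e\ge 0$ — which preserves order because in an $\ell$-ring the product of nonnegative elements is nonnegative — and using $e(1-e)=e-e^2=0$, I obtain $0\le x(1-e)\le e(1-e)=0$ and $0\le xe\le (1-e)e=0$. Hence $x=xe+x(1-e)=0$, so $e\wedge(1-e)=0$ and therefore $e=2e\wedge 1$.

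For the converse, assume $e=2e\wedge 1$. Then $e\le 1$ and $e\le 2e$, the latter giving $0\le e$, so again $0\le e\le 1$; and the translation identity yields $e\wedge(1-e)=0$. Here I would invoke the standard $f$-ring fact that disjoint elements are orthogonal, i.e.\ $a\wedge b=0$ implies $ab=0$: applying the $f$-ring axiom with $r=a$, $s=b$, $t=a$ gives $ab\wedge a=0$, and a second application with $r=b$, $s=a$, $t=ab$ gives $ab\wedge ab=ab=0$. Taking $a=e$ and $b=1-e$ then forces $e(1-e)=0$, that is $e^2=e$, so $e\in\Id(A)$.

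The main obstacle — and the only place where the full $f$-ring hypothesis is genuinely used rather than the weaker $\ell$-ring structure — is the implication $e\wedge(1-e)=0\Rightarrow e(1-e)=0$ needed for the converse. Disjointness-implies-orthogonality fails in general $\ell$-rings and is precisely the content of the $f$-ring axiom; the double application above is the crux. By contrast, the reverse passage $e(1-e)=0\Rightarrow e\wedge(1-e)=0$ used in the forward direction needs only order-preservation of multiplication by positive elements together with the bounds $0\le e\le 1$, which in turn rest on positivity of squares.
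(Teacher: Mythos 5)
Your proof is correct, and it takes a genuinely different route from the paper's on \emph{both} implications, even though the two proofs share the same starting point: the translation identity $(2e\wedge 1)-e=(2e-e)\wedge(1-e)=e\wedge(1-e)$, which reduces the lemma to the equivalence of $e\wedge(1-e)=0$ with $e(1-e)=0$. For the direction $e\in\Id(A)\Rightarrow e=2e\wedge 1$, the paper does not argue from scratch: it invokes \cite[Lem.~4.9(2)]{BMMO15b}, which says that in an $f$-ring the boolean order on $\Id(A)$ is the restriction of the order on $A$, whence $(1\wedge 2e)-e=(1-e)\wedge e=0$; you instead obtain the bounds $0\le e\le 1$ from positivity of squares and then kill $x=e\wedge(1-e)$ by the multiplication argument $x=xe+x(1-e)=0$, making this half self-contained modulo one textbook fact. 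For the converse $e=2e\wedge 1\Rightarrow e\in\Id(A)$, the paper uses only $\ell$-ring reasoning: from $0\le e,1-e\le 1$ it gets $0\le e(1-e)\le e\wedge(1-e)=0$, with no appeal to the $f$-ring axiom; you instead apply the $f$-ring axiom twice to prove that disjointness implies orthogonality. Both routes are valid: yours buys independence from the companion paper's lemma, while the paper's buys brevity and, in the converse direction, a strictly more elementary argument. One inaccuracy in your closing commentary: it is not true that the $f$-ring hypothesis is genuinely used only in your converse, since positivity of squares --- on which your forward direction's bounds rest --- fails in general $\ell$-rings and is itself standardly deduced from the very disjointness-implies-orthogonality property you prove; so the $f$-ring hypothesis enters both halves of your argument, and it is rather the paper's treatment of $e=2e\wedge 1\Rightarrow e\in\Id(A)$ that genuinely avoids it.
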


\begin{proof}
Let $e \in A$. We have 
\[
e = 2e \wedge 1 \mbox{ iff } (2e \wedge 1) - e = 0 \mbox{ iff } e \wedge (1-e) = 0.
\] 
Therefore, if $e = 2e \wedge 1$, then $0 \le e, 1-e \le 1$, and hence $e(1-e) = 0$ because $e(1-e) \le e \wedge (1-e)$. Thus, $e^2 = e$. Conversely, let $e \in \Id(A)$. Since $A$ is an $f$-ring, the proof of \cite[Lem.~4.9(2)]{BMMO15b} shows that the order on $\Id(A)$ is the restriction of the order on $A$. Therefore, 
\[
(1\wedge 2e)-e = (1-e)\wedge e = 0. 
\]
Thus, $e = 2e \wedge 1$.
\end{proof}

To prove that each de Vries morphism lifts to a proximity morphism, we need the following lemma. A choice-dependent proof of Item (1) was given in \cite[Prop.~6.6]{BMMO15b}. We give a choice-free proof of (1) which together with \cite[Thm.~6.7]{BMMO15b} then yields a choice-free proof of Item~(2). 

\begin{lemma} \label{prop:5.4}\label{proximity_morphism_on_decr_rep}
Let $(S,\lhd)$ and $(T,\lhd)$ be proximity Baer-Specker $D$-algebras and let $\alpha : S \to T$ be a proximity morphism. 
\begin{enumerate}[$(1)$]
\item $\alpha(\func{Id}(S)) \subseteq \func{Id}(T)$ and $\alpha|_{\func{Id}(S)}$ is a de Vries morphism from $\func{Id}(S)$ to $\func{Id}(T)$.
\item Let $s \in S$ and write $s = a_0 + \sum_{i=1}^n b_i e_i$ in decreasing form. Then $\alpha(s) = a_0 + \sum_{i=1}^n b_i \alpha(e_i)$.
\end{enumerate}
\end{lemma}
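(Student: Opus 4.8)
My plan is to prove the two parts in order, using Part (1) to get at Part (2). For Part (1), the goal is first to show $\alpha$ sends idempotents to idempotents, and then that its restriction satisfies the de Vries morphism axioms. The natural tool here is Lemma~\ref{lem: idempotent characterization}, which gives the purely order-theoretic characterization $e \in \Id(A)$ iff $e = 2e \wedge 1$. So for $e \in \func{Id}(S)$ I would compute $\alpha(e)$ and try to verify $\alpha(e) = 2\alpha(e) \wedge 1$. The delicate point is that $\alpha$ is only a proximity morphism, so I cannot assume it commutes with all the lattice and algebra operations. I have axiom (M2) giving $\alpha(s \wedge t) = \alpha(s) \wedge \alpha(t)$, axiom (M6) giving $\alpha(as) = a\alpha(s)$ for $0 \le a \in D$, and axiom (M1) giving $\alpha(1) = 1$ (via the remark after Definition~\ref{def:5.1}). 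Using $e = 2e \wedge 1$ and applying $\alpha$, I would get $\alpha(e) = \alpha(2e \wedge 1) = \alpha(2e) \wedge \alpha(1) = 2\alpha(e) \wedge 1$, which by Lemma~\ref{lem: idempotent characterization} forces $\alpha(e) \in \func{Id}(T)$. This is clean, and I expect it to be the intended argument.

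Having established $\alpha(\func{Id}(S)) \subseteq \func{Id}(T)$, I would verify that $\alpha|_{\func{Id}(S)}$ satisfies the de Vries morphism axioms. These are exactly the restrictions of (M1)--(M4): $\alpha(0)=0$ is (M1); $\alpha(e \wedge f) = \alpha(e) \wedge \alpha(f)$ is (M2); the axiom $e \lhd f \Rightarrow \lnot\alpha(\lnot e) \lhd \alpha(f)$ should follow from (M3), once I check that for an idempotent $e$ the boolean complement $\lnot e = 1 - e$ agrees with the ring expression $-e$ shifted by $1$, so that $-\alpha(-e)$ translates into $\lnot\alpha(\lnot e)$ on idempotents using (M5); and the join condition $\alpha(f) = \bigvee\{\alpha(e) : e \lhd f\}$ is (M4) restricted to idempotents, where I must check the join taken in $\func{Id}(T)$ coincides with the join in $T$, which holds because the order on idempotents is the restriction of the order on the $f$-ring (as in Lemma~\ref{lem: idempotent characterization}'s proof). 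The main obstacle I anticipate in Part (1) is precisely this bookkeeping between boolean complementation and ring negation, and checking that the suprema in (M4) restrict correctly; I would lean on the fact that $\lhd$ restricts to the de Vries proximity $\prec$ on idempotents (Proposition~\ref{prop:4.6}) to align the two formulations.

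For Part (2), I would proceed by induction on the length $n$ of the decreasing decomposition $s = a_0 + \sum_{i=1}^n b_i e_i$. The base case $s = a_0 \in D$ is handled by $\alpha(a_0) = a_0$. For the inductive step, the key is additivity of $\alpha$ on these decreasing sums, which is \emph{not} one of the axioms directly and must be derived. I expect the cited \cite[Thm.~6.7]{BMMO15b} to supply the bridge: the strategy suggested by the lemma's preamble is that Part (1) together with that theorem yields Part (2). Concretely, I would use (M5) to peel off the scalar $a_0$, reducing to $s \ge 0$ with $a_0 = 0$, then use (M6) to pull out positive scalars $b_i$, and finally establish that $\alpha$ is additive on the terms $b_i e_i$ by exploiting that the $e_i$ are decreasing and that $\alpha$ preserves the relevant joins and meets via (M2), (M4), and (M7). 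The genuine difficulty here, and the step I expect to be hardest, is proving additivity $\alpha(x + y) = \alpha(x) + \alpha(y)$ for these specific decreasing pieces, since proximity morphisms need not be additive in general; the decreasing structure is what makes it work, and I would invoke \cite[Thm.~6.7]{BMMO15b} to legitimize this rather than reprove it from scratch.
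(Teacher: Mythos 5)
Your treatment of idempotent preservation in Part (1), of the axioms (M1)--(M3), and of Part (2) matches the paper's proof: the computation $\alpha(e) = \alpha(2e \wedge 1) = \alpha(2e) \wedge \alpha(1) = 2\alpha(e) \wedge 1$ via Lemma~\ref{lem: idempotent characterization}, the identity $\lnot\alpha(\lnot e) = 1 - \alpha(1-e) = -\alpha(-e)$ via (M5), and the appeal to \cite[Thm.~6.7]{BMMO15b} for Part (2) are exactly what the paper does.

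However, there is a genuine gap in your verification of (M4). You assert that the de Vries join condition $\alpha(f) = \bigvee\{\alpha(e) : e \in \Id(S),\ e \lhd f\}$ ``is (M4) restricted to idempotents,'' with the only remaining issue being whether joins in $\Id(T)$ agree with joins in $T$. That is not the case: axiom (M4) for $\alpha$ states $\alpha(f) = \bigvee\{\alpha(s) : s \in S,\ s \lhd f\}$, a join over \emph{all} elements of $S$ proximally below $f$, not just over idempotents. Passing to the smaller index set $\{e \in \Id(S) : e \lhd f\}$ could a priori yield a strictly smaller supremum, so the de Vries (M4) does not follow by mere restriction. The paper closes this gap with a cofinality argument your plan is missing: given $0 \le s \lhd f$ (one first reduces to $s \ge 0$), write $s = \sum_{i=1}^n b_i e_i$ in orthogonal form with each $b_i > 0$; then $b_i e_i \le s \lhd f$ gives $b_i e_i \lhd f$ by (P3), and the argument of \cite[Prop.~5.1]{BMMO15b}, which relies on (P7), yields $b_i \le 1$ and $e_i \lhd f$ for each $i$. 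Hence $s \le e_1 \vee \cdots \vee e_n \lhd f$, so $\alpha(s) \le \alpha(e_1 \vee \cdots \vee e_n)$ with $e_1 \vee \cdots \vee e_n$ an idempotent proximally below $f$. This shows the idempotent joinands are cofinal among all joinands, which is what forces the two suprema to coincide; it is the one substantive step in the proof of Part (1), and without it the verification of (M4) fails. (The issue you do raise, comparing joins in $\Id(T)$ with joins in $T$, is the easy direction: once $\alpha(f)$ is the supremum in $T$ of a set of idempotents and is itself idempotent, it is automatically the supremum in $\Id(T)$.)
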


\begin{proof}
(1) Let $e \in \Id(S)$. By Lemma~\ref{lem: idempotent characterization}, $e = 2e \wedge 1$, so
\[
\alpha(e) = \alpha(2e \wedge 1) = \alpha(2e) \wedge \alpha(1) = 2\alpha(e) \wedge 1.
\]
Therefore, $\alpha(e) \in \Id(T)$.

It follows that $\alpha|_{\func{Id}(S)}: \func{Id}(S) \to \func{Id}(T)$ is well defined. It is also clear that $\alpha|_{\func{Id}(S)}$ satisfies (M1) and (M2). Suppose that $e,f\in\func{Id}(S)$ with $e \lhd f$. Since $\lnot e=1-e$, we have
\[
\lnot \alpha(\lnot e)=1-\alpha(1-e)=1-[1+\alpha(-e)]=-\alpha(-e). 
\]
Because $-\alpha(-e) \lhd \alpha(f)$, we conclude that $\lnot \alpha(\lnot e) \lhd \alpha(f)$. Therefore, $e \lhd f$ implies that $\lnot \alpha(\lnot e) \lhd \alpha(f)$, which is the de Vries analogue of (M3). Let $f \in \func{Id}(S)$. Then $\alpha(f) = \bigvee \{\alpha(s) : s \in S, s \lhd f\}$. Suppose that $0 \le s \lhd f$. As above, write $s = \sum_{i=1}^n b_i e_i$ in orthogonal form with each $b_i>0$. Then $b_ie_i \le s \lhd f$, so $b_ie_i \lhd f$ by (P3). It follows from the proof of \cite[Prop.~5.1]{BMMO15b}, which uses (P7), that $b_i \le 1$ and $e_i \lhd f$ for each $i$. Consequently, $s \le e_1 \vee \cdots \vee e_n \lhd f$. Since $\alpha(s) \le \alpha(e_1 \vee \cdots \vee e_n)$ and $e_1 \vee \cdots \vee e_n \in \Id(S)$, we see that $\alpha(f)=\bigvee\{\alpha(e) : e \in \func{Id}(S), e \lhd f\}$. Thus, $\alpha|_{\func{Id}(S)}$ satisfies (M4).

(2) In the proof of \cite[Thm.~6.7]{BMMO15b} substitute the choice-dependent proof of (1) with the choice-free proof above.
\end{proof}

We recall the isomorphism $\tau_B : B \to \Id(D[B]^\flat)$ from Remark~\ref{rem: idempotents of D[B]flat}, given by $\tau_B(e) = e^\flat$.

\begin{theorem} \label{morphism_uniqueness}
Let $(A,\prec)$ and $(B,\prec)$ be de Vries algebras and let $\sigma : A \to B$ be a de Vries morphism. Then there is a unique proximity morphism $\sigma^\flat : D[A]^\flat \to D[B]^\flat$ such that $\sigma^\flat \circ \tau_A = \tau_B \circ \sigma$.
\[
\begin{tikzcd}
A \arrow[r, "\sigma"] \arrow[d, "\tau_A"'] & B \arrow[d, "\tau_B"] \\
D[A]^\flat \arrow[r, "\sigma^\flat"'] & D[B]^\flat
\end{tikzcd}
\]
\end{theorem}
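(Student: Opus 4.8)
The plan is to define $\sigma^\flat$ by composing with $\sigma$ at the level of functions and then to verify the proximity‑morphism axioms (M1)--(M7) one at a time, reducing the proximity‑flavored axioms to the defining axioms of the de Vries morphism $\sigma$. First I would record that $\sigma$ is normal: taking $a=b=1$ in the de Vries axiom $a\prec b\Rightarrow\lnot\sigma(\lnot a)\prec\sigma(b)$ and using $1\prec 1$ gives $1\prec\sigma(1)$, hence $\sigma(1)=1$ (and $\sigma(0)=0$ by definition). For $f\in D[A]^\flat$ I set $\sigma^\flat(f):=\sigma\circ f$. Since $f$ is a decreasing step function with finitely many values $1=e_0>e_1>\cdots>e_n\ge 0$ and $\sigma$ preserves finite meets (hence is monotone), $\sigma\circ f$ is again a decreasing step function with value $\sigma(1)=1$ low in $D$ and $\sigma(0)=0$ high in $D$; after collapsing repeated values this shows $\sigma^\flat(f)\in D[B]^\flat$, so $\sigma^\flat$ is well defined. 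Evaluating on the idempotent $\tau_A(e)=e^\flat$ from Remark~\ref{rem: idempotents of D[B]flat} gives $\sigma^\flat(e^\flat)=\sigma(e)^\flat=\tau_B(\sigma(e))$, which is exactly the required commutativity $\sigma^\flat\circ\tau_A=\tau_B\circ\sigma$. Comparing function values via Proposition~\ref{decr dec}, I would also note that if $s=a_0+\sum_{i=1}^n b_i\,\epsilon_i^\flat$ is in decreasing form, then $\sigma^\flat(s)=a_0+\sum_{i=1}^n b_i\,\sigma(\epsilon_i)^\flat$; this decreasing‑form description of $\sigma^\flat$ coincides with the pointwise one and is convenient for the algebraic axioms.

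Next I would dispatch the algebraic axioms. Axiom (M1) is immediate, and (M2) follows since meets are computed pointwise (Lemma~\ref{* arithmetic}(1)) and $\sigma$ preserves meets: $\sigma^\flat(f\wedge g)(a)=\sigma(f(a)\wedge g(a))=\sigma(f(a))\wedge\sigma(g(a))$. The same pointwise description holds for joins (by the argument of Lemma~\ref{* arithmetic} together with part (2)), and since a scalar $a\in D$ corresponds to a function taking only the values $0,1$, axiom (M7) reduces to $\sigma(y\vee 1)=1=\sigma(1)$ and $\sigma(y\vee 0)=\sigma(y)$. For (M5) and (M6) I would use the decreasing‑form description: adding a scalar $a$ merely replaces $a_0$ by $a_0+a$, and multiplying by $0\le a\in D$ scales all coefficients, so $\sigma^\flat(s+a)=\sigma^\flat(s)+a$ and $\sigma^\flat(as)=a\,\sigma^\flat(s)$ are read off at once.

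The heart of the argument is (M3) and (M4), where $\prec^\flat$ is the pointwise lift of $\prec$ (Definition~\ref{proximity_definition}). For (M4) I would show that $\sigma^\flat(g)$ is the least upper bound of $\{\sigma^\flat(f):f\prec^\flat g\}$: it is an upper bound by monotonicity and (P2); for minimality, given a point $x_0$ and any $c\prec g(x_0)$ I construct $f\prec^\flat g$ with $f(x_0)=c$ (the three‑valued step function $1,c,0$, using $c\prec g(x_0)\le g(x)$ below $x_0$ and $0\prec g(x)$ above), so any upper bound $h$ satisfies $h(x_0)\ge\sigma(c)$ for all $c\prec g(x_0)$, whence $h(x_0)\ge\bigvee\{\sigma(c):c\prec g(x_0)\}=\sigma(g(x_0))=\sigma^\flat(g)(x_0)$ by the de Vries axiom (completeness of $B$ guarantees these joins exist), and $h\ge\sigma^\flat(g)$ follows from Lemma~\ref{* arithmetic}(2). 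I expect (M3) to be the main obstacle, because it involves the additive inverse, which on the function side acts by reflection and Boolean complementation rather than pointwise. The key computation I would carry out is the identity $(-\sigma^\flat(-f))(x)=\lnot\sigma(\lnot f(x))$, obtained by first establishing, through the bijection $\alpha$ with $D[A]^*$ and the rule $(-F)(c)=F(-c)$ from Remark~\ref{order remark}, that $(-h)(x)=\lnot\bigvee\{h(y):y>-x\}$ for $h\in D[A]^\flat$, and then tracking breakpoints through the two negations. Granting this identity, (M3) reduces instantly to the de Vries axiom $f(x)\prec g(x)\Rightarrow\lnot\sigma(\lnot f(x))\prec\sigma(g(x))$, applied at each $x\in D$.

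Finally, uniqueness is clean given the earlier machinery: any proximity morphism $\beta\colon D[A]^\flat\to D[B]^\flat$ with $\beta\circ\tau_A=\tau_B\circ\sigma$ agrees with $\sigma^\flat$ on idempotents, and by Lemma~\ref{prop:5.4}(2) a proximity morphism is determined by its action on a decreasing form, so $\beta(a_0+\sum_i b_ie_i)=a_0+\sum_i b_i\beta(e_i)=a_0+\sum_i b_i\sigma^\flat(e_i)=\sigma^\flat(a_0+\sum_i b_ie_i)$, giving $\beta=\sigma^\flat$.
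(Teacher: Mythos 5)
Your proposal is correct, and its architecture is the same as the paper's: define $\sigma^\flat(f)=\sigma\circ f$, check commutativity on the idempotents $e^\flat$, reduce the axioms pointwise to the de Vries axioms for $\sigma$ (via the pointwise order of Lemma~\ref{* arithmetic}, the negation formula obtained through $\alpha$ and Remark~\ref{order remark}, and explicit witness step functions for (M4)), and prove uniqueness via decreasing forms and Lemma~\ref{prop:5.4}(2), which is word for word the paper's argument. Two local differences are worth recording. First, in (M3) you condense the paper's computation into the pointwise identity $(-\sigma^\flat(-f))(a)=\lnot\sigma(\lnot f(a))$ and then invoke the de Vries axiom at each $a$; this identity is true, and the breakpoint argument you leave as a sketch is the same one the paper uses to prove $\bigwedge_{d<a}\bigvee_{c>d}\sigma(g(c))=\sigma(g(a))$ (it needs $\sigma(0)=0$, $\sigma(1)=1$, and preservation of the \emph{finite} meets involved, all of which you have), so this is a clean repackaging rather than a gap. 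Second, and more interestingly, your (M4) witness is the right one where the paper's, as literally written, is not: you take $f$ equal to $1$ exactly where $g=1$, equal to $c$ from there through the point $x_0$, and $0$ afterwards, which satisfies $f\prec^\flat g$ because $c\prec g(x_0)\le g(x)$ on the middle range and $0\prec g(x)$ above it. The paper instead uses the function associated to $1\ge\cdots\ge1\ge e\ge0\ge\cdots\ge0$ with $e$ in the $i$-th slot; by the paper's own criterion ($f\prec^\flat g$ iff each value of $f$ is $\prec$ the corresponding value $f_j$ of $g$) this would require $1\prec f_j$ for $j<i$, which forces $f_j=1$ and therefore fails whenever $i\ge2$. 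The correct padding is by $e$ itself, using $e\prec f_i\le f_j$, hence $e\prec f_j$, for $j\le i$ --- which is exactly what your three-valued function does. So your proposal matches the paper's strategy and, at that one point, quietly repairs it.
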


\begin{proof}
Define $\sigma^\flat: D[A]^\flat \to D[B]^\flat$ by $\sigma^\flat(f) = \sigma\circ f$. It is easy to see that $\sigma^\flat$ is well defined. Let $e \in A$ and consider the corresponding idempotent $\tau_A(e) = e^\flat \in D[A]^\flat$. It follows from Remark~\ref{rem: idempotents of D[B]flat} that $\sigma\circ e^\flat = \sigma(e)^\flat$. Thus, $\sigma^\flat \circ \tau_A = \tau_B \circ \sigma$. 

We now show that $\sigma^\flat$ is a proximity morphism. Verifying (M1) and (M2) is straightforward, so we begin with (M3). 

(M3)
We first show that
\begin{equation}
(-f)(a) = \bigwedge \{\lnot f(b) : b > -a\}. \tag{$*$} \label{eqn: -f}
\end{equation}
for each $f \in D[A]^\flat$ and $a \in D$. 
There is $s \in D[A]^*$ with $f = \alpha(s)$. Since $\alpha$ is a $D$-algebra homomorphism, $-f = \alpha(-s)$. Therefore,
\begin{align*}
(-f)(a) &= \bigvee \{ (-s)(b) : b \ge a\} = \bigvee \{ s(-b) : b \ge a \} = \bigvee \{ s(c) : c \le -a\}.
\end{align*}
Since the values of $s$ are pairwise orthogonal and $\bigvee \{ s(c) : c \in D\} = 1$ , we have
\[
(-f)(a) = \lnot \bigvee \{ s(c) : c > -a\} = \bigwedge \{ \lnot s(c) : c > -a\}.
\]
In addition,
\begin{align*}
\bigwedge  \{\lnot f(b) : b > -a\} &= \bigwedge \{ \lnot \bigvee \{ s(c) : c \ge b\} : b > -a\} \\
&= \bigwedge \{ \lnot s(c) : c \ge b > -a\} = \bigwedge \{ \lnot s(c) : c > -a\}.
\end{align*}
This verifies Equation~(\ref{eqn: -f}).

Now let $f,g \in D[A]^\flat$ with $f \prec^\flat g$ and let $a \in D$. By Equation~(\ref{eqn: -f}) applied twice,
\[
(-\sigma^\flat(-f))(a) = \bigwedge_{b > -a} \lnot \sigma^\flat(-f)(b) =\bigwedge_{b > -a} \lnot \sigma((-f)(b)) = \bigwedge_{b > -a} \lnot \sigma\left(\bigwedge_{c > -b} \lnot f(c)\right). 
\] 
Thus, as $\sigma$ preserves finite meets,
\begin{align*}
(-\sigma^\flat(-f))(a) &= \bigwedge_{b > -a} \lnot \sigma\left(\bigwedge_{c > -b} \lnot f(c)\right)  \\
&= \bigwedge_{b > -a} \lnot \bigwedge_{c > -b} \sigma(\lnot f(c)) = \bigwedge_{b > -a} \bigvee_{c > -b} \lnot \sigma(\lnot f(c)) \\
&= \bigwedge_{a > d} \bigvee_{c > d} \lnot \sigma(\lnot f(c)).
\end{align*}
Since $f \prec^\flat g$, we have $f(c) \prec g(c)$ for each $c \in D$, and so $\lnot \sigma(\lnot f(c)) \prec \sigma(g(c))$. Hence, $\bigwedge_{a > d} \bigvee_{c > d} \lnot \sigma(\lnot f(c)) \prec \bigwedge_{a > d} \bigvee_{c > d} \sigma(g(c))$. We show that $\bigwedge_{a > d} \bigvee_{c > d} \sigma(g(c)) = \sigma(g(a))$. There are $a_0 < \cdots < a_n$ in $D$ and $1 = e_0 > \cdots > e_n > 0$ in $A$ with $g(a) = e_i$ if $a_{i-1} < a \le a_i$, $g(a) = 1$ if $a \le a_0$, and $g(a) = 0$ if $a > a_n$. Let $a \in D$. There is $i$ with $a_{i-1} < a \le a_i$. Take $d < a$. If $a_{i-1} \le d$, then $\bigvee_{c > d}\sigma(g(c)) = \sigma(e_i) = \sigma(g(a))$ since $e_i$ is the largest element of $\{g(c) : c > d\}$. On the other hand, if $d < a_{i-1}$, then $\bigvee_{c > d} \sigma(g(c)) = \sigma(e_{i-1}) \ge \sigma(e_i)$. Thus, $\bigwedge_{a > d} \bigvee_{c > d} \sigma(g(c)) = \sigma(e_i) = \sigma(g(a))$, as desired. Consequently, we have  $(-\sigma^\flat(-f))(a) \prec \sigma^\flat(g)(a)$ for each $a \in D$, which yields $(-\sigma^\flat(-f)) \prec^\flat \sigma^\flat(g)$.

(M4) Let $g \in D[A]^\flat$. Clearly $\sigma^\flat(g)$ is an upper bound of $\{\sigma^\flat(f) : f \prec^\flat g\}$. To see that $\sigma^\flat(g)$ is the least upper bound, it is sufficient to show that
\begin{equation}
\sigma(g(a)) = \bigvee \{ \sigma(f(a)) : f \prec^\flat g\} \textrm{ for each }a \in D. \tag{$**$} \label{eqn: M4}
\end{equation}
Indeed, suppose Equation~(\ref{eqn: M4}) holds and $h \in D[B]^\flat$ is an upper bound of $\{\sigma^\flat(f) : f \prec^\flat g\}$.  Then, by Lemma~\ref{* arithmetic}(2), $h(a) \ge \sigma^\flat(f)(a) = \sigma(f(a))$ for each $a \in D$. Therefore, $h(a) \ge \sigma(g(a)) = \sigma^\flat(g)(a)$ for each $a \in D$, and so $h \ge \sigma^\flat(g)$, again by Lemma~\ref{* arithmetic}(2).

To prove Equation~(\ref{eqn: M4}), there are $a_0 < \cdots < a_n$ in $D$ and $1 = f_0 > \cdots > f_n > 0$ in $A$ with $g(a) = f_i$ if $a_{i-1} < a \le a_i$,  $g(a) = 1$ if $a \le a_0$, and $g(a) = 0$ if $a > a_n$. If $1 = e_0 \ge e_1 \ge \cdots \ge e_n$ are elements of $A$, then there is $f \in D[A]^\flat$ satisfying $f(a) = 1$ if $a \le a_0$, $f(a) = e_i$ if $a_{i-1} < a \le a_i$ for each $i$, and $f(a) = 0$ if $a > a_n$. We call $f$ the function associated to $e_0 \ge \cdots \ge e_n$ (fixing $a_0 < \cdots < a_n)$. Note that $f \prec^\flat g$ iff $e_i \prec f_i$ for each $i$ by the definition of $\prec^\flat$.

Let $a \in D$. If $a \le a_0$, then $\sigma(g(a)) = \sigma(1) = 1$. Let $f \in D[A]^\flat$ be associated to $1 > 0 \ge 0 \ge\cdots \ge 0$. Then $f \prec^\flat g$ and $f(a) = 1$, so $\sigma(f(a)) = 1$. Therefore, Equation~(\ref{eqn: M4}) holds for $a \le a_0$. Next, suppose that $a > a_n$. Then $\sigma(g(a)) = \sigma(0) = 0$. With the same $f$, we have $\sigma(f(a)) = 0$, and so Equation~(\ref{eqn: M4}) also holds for $a > a_n$. Finally, let $a_{i-1} < a \le a_i$. Since $\sigma$ is a de Vries morphism, $\sigma(g(a)) = \bigvee \{ e : e \prec g(a)\}$. Let $e \in A$ with $e \prec g(a)$ and let $f$ be the function associated to $1 \ge  \cdots \ge 1 \ge e \ge 0 \ge \cdots \ge 0$, where $e$ is the $i$-th term in the sequence. Then $f \prec^\flat g$ and $f(a) = e$. Since we can produce $f \prec^\flat g$ for each $e$ with $e \prec g(a)$, this shows that Equation~(\ref{eqn: M4}) holds for $a$. Therefore, Equation~(\ref{eqn: M4}) holds for all $a \in D$.  

The proofs of (M5)--(M7) are similar to each other, so we only give the proof of (M6).

(M6) Let $f \in D[A]^\flat$ and $0 < b \in D$. Let $\{1 = e_0 > \cdots > e_n > 0\}$ be the image of $f$ in $A$. For each $i$, let $a_i$ be the largest element of $f^{-1}(e_i)$. We claim that $(bf)(a) = e_i$ for $ba_{i-1} < a \le ba_i$. For suppose that $ba_{i-1} < a \le ba_i$. By Theorem~\ref{* axiomitization}(2), $(bf)(a) = \bigvee_{bc \ge a} f(c)$. Since $bc \ge a > ba_{i-1}$, 
we have $b(c-a_{i-1}) = bc - ba_{i-1} > 0$. If $c \le a_{i-1}$, then $c - a_{i-1} \le 0$, so $b(c - a_{i-1}) \le 0$, a contradiction. Therefore,
$c > a_{i-1}$, and hence $f(c) \le e_i$. Thus, by choosing $c = a_i$, we see that the join is $f(a_i) = e_i$, so that  $(bf)(a) = e_i$ for $ba_{i-1} < a \le ba_i$, as claimed. We see then that $\sigma^\flat(bf)(a) = \sigma((bf)(a)) = \sigma(e_i)$ if $ba_{i-1} < a \le ba_i$.
By the same reasoning, since $\{1 = \sigma(e_0) > \cdots > \sigma(e_n) > 0\}$ is the image of $\sigma^\flat(f)$, we have $(b\sigma^\flat(f))(a) = \sigma(e_i)$ if $ba_{i-1} < a \le ba_i$.
Therefore, $\sigma^\flat(bf) = b\sigma^\flat(f)$.

Finally, to prove uniqueness, suppose $\gamma : D[A]^\flat\to D[B]^\flat$ is another proximity morphism with $\gamma \circ \tau_A = \tau_B \circ \sigma$. Let $f \in D[A]^\flat$. Then there are $a_0 < \cdots < a_n$ in $D$ and $1 = e_0 > \cdots > e_n > 0$ in $A$ with $f(a) = e_i$ when $a_{i-1} < a \le a_i$. By Proposition~\ref{decr dec}(2), if $b_i = a_i - a_{i-1}$ for $1 \le i \le n$, then $f = a_0 + \sum_i b_i e^\flat_i$. By Lemma~\ref{proximity_morphism_on_decr_rep}(2), $\gamma(f) = a_0 + \sum_i b_i \sigma(e_i)^\flat$. Therefore, $\gamma(f)(a) = \sigma(e_i)$ when $a_{i-1} < a \le a_i$. Thus, $\gamma(f) = \sigma \circ f  = \sigma^\flat(f)$.
\end{proof}

\begin{definition} \label{def: eta}
For a Specker $D$-algebra $S$ let  $\eta_S : S \to D[\Id(S)]^\flat$ be given by $\eta_S(s) = s^\flat$. 
\end{definition}

By Theorem~\ref{prop: relation between flat and perp}, $\eta_S$ is an  $\ell$-algebra isomorphism. 

\begin{corollary} \label{cor:7.8}
If $(S,\lhd)$ and $(T,\lhd)$ are proximity Baer-Specker $D$-algebras and $\sigma : \Id(S) \to \Id(T)$ is a de Vries morphism, then there is a unique proximity morphism $\alpha : S \to T$ such that $\alpha$ extends $\sigma$ and $\sigma^\flat \circ \eta_S = \eta_T \circ \sigma$.
\[
\begin{tikzcd}
S \arrow[d, "\eta_S"'] \arrow[r, "\alpha"] & T \arrow[d, "\eta_T"] \\
D[\Id(S)]^\flat \arrow[r, "\sigma^\flat"'] & D[\Id(T)]^\flat
\end{tikzcd}
\]

\end{corollary}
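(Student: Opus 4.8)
The plan is to construct $\alpha$ by transporting the map $\sigma^\flat$ of Theorem~\ref{morphism_uniqueness} across the isomorphisms $\eta_S$ and $\eta_T$. First I would observe that Theorem~\ref{morphism_uniqueness} applies: by Proposition~\ref{prop:4.6} the restrictions $\prec_S$ and $\prec_T$ of $\lhd$ to $\Id(S)$ and $\Id(T)$ are de Vries proximities, and since $S,T$ are Baer the boolean algebras $\Id(S),\Id(T)$ are complete, so $(\Id(S),\prec_S)$ and $(\Id(T),\prec_T)$ are de Vries algebras and $\sigma$ is a de Vries morphism between them. Theorem~\ref{morphism_uniqueness} then yields a unique proximity morphism $\sigma^\flat:D[\Id(S)]^\flat\to D[\Id(T)]^\flat$ with $\sigma^\flat\circ\tau_{\Id(S)}=\tau_{\Id(T)}\circ\sigma$. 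I would then define
\[
\alpha = \eta_T^{-1}\circ\sigma^\flat\circ\eta_S.
\]
The commutativity of the stated square, namely $\eta_T\circ\alpha=\sigma^\flat\circ\eta_S$, is then immediate, and $\eta_S,\eta_T$ are $\ell$-algebra isomorphisms by Proposition~\ref{prop: relation between flat and perp}.

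The crucial input that makes this definition behave is Theorem~\ref{thm: de Vries power}(2): the maps $\eta_S,\eta_T$ are not merely $\ell$-algebra isomorphisms but also \emph{preserve and reflect} proximity, i.e.\ $s\lhd t$ iff $\eta_S(s)\prec_S^\flat\eta_S(t)$, and likewise for $T$. Given this, I would verify the axioms (M1)--(M7) for $\alpha$ by pulling each back to the corresponding axiom for $\sigma^\flat$, using injectivity of $\eta_T$. The purely algebraic axioms (M1), (M2), (M5)--(M7) transport routinely because $\eta_S,\eta_T$ are ring, lattice, and $D$-module isomorphisms; for example $\eta_T(\alpha(s\wedge t))=\sigma^\flat(\eta_S(s)\wedge\eta_S(t))=\sigma^\flat(\eta_S(s))\wedge\sigma^\flat(\eta_S(t))=\eta_T(\alpha(s)\wedge\alpha(t))$, and the scalar axioms follow from $\eta_S(s+a)=\eta_S(s)+a$ and $\eta_S(as)=a\,\eta_S(s)$.

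The steps that genuinely use the proximity structure are (M3) and (M4), and these are where the only real care is needed. For (M3), from $s\lhd t$ I get $\eta_S(s)\prec_S^\flat\eta_S(t)$ by Theorem~\ref{thm: de Vries power}(2); applying (M3) for $\sigma^\flat$ gives $-\sigma^\flat(-\eta_S(s))\prec_T^\flat\sigma^\flat(\eta_S(t))$; since $\eta_S$ is additive, $-\eta_S(s)=\eta_S(-s)$, so the two sides equal $\eta_T(-\alpha(-s))$ and $\eta_T(\alpha(t))$, and reflecting proximity through $\eta_T$ yields $-\alpha(-s)\lhd\alpha(t)$. For (M4) I would use that $\eta_S$ is onto and $\eta_T$ is an order isomorphism: writing each $g\prec_S^\flat\eta_S(t)$ as $g=\eta_S(s)$ with $s\lhd t$, the identity $\sigma^\flat(\eta_S(t))=\bigvee\{\sigma^\flat(\eta_S(s)):s\lhd t\}$ supplied by (M4) for $\sigma^\flat$ transports under $\eta_T^{-1}$ to $\alpha(t)=\bigvee\{\alpha(s):s\lhd t\}$, the join being preserved since $\eta_T^{-1}$ is an order isomorphism.

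Finally I would check that $\alpha$ extends $\sigma$ and is unique. For the first, for $e\in\Id(S)$ we have $\eta_S(e)=\tau_{\Id(S)}(e)$ by Remark~\ref{rem: idempotents of D[B]flat}, so $\eta_T(\alpha(e))=\sigma^\flat(\tau_{\Id(S)}(e))=\tau_{\Id(T)}(\sigma(e))=\eta_T(\sigma(e))$, and injectivity of $\eta_T$ gives $\alpha(e)=\sigma(e)$. Uniqueness is the cleanest part: any proximity morphism $\beta:S\to T$ extending $\sigma$ agrees with $\alpha$ on $\Id(S)$, and by Lemma~\ref{proximity_morphism_on_decr_rep}(2) both $\alpha$ and $\beta$ are then forced on an arbitrary $s=a_0+\sum_i b_ie_i$ in decreasing form via $\alpha(s)=a_0+\sum_i b_i\sigma(e_i)=\beta(s)$, whence $\alpha=\beta$. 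The main obstacle, such as it is, is purely bookkeeping: keeping straight which maps preserve versus reflect proximity in (M3) and (M4). Once Theorem~\ref{thm: de Vries power}(2) furnishes the two-way equivalence for $\eta_S$ and $\eta_T$, the remainder is mechanical.
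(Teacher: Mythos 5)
Your proposal is correct and takes essentially the same route as the paper: both construct $\alpha = \eta_T^{-1}\circ\sigma^\flat\circ\eta_S$ from the map $\sigma^\flat$ supplied by Theorem~\ref{morphism_uniqueness} and then check extension of $\sigma$ via the idempotent identification $\eta_S|_{\Id(S)} = \tau_{\Id(S)}$. The only differences are matters of detail: you verify axioms (M1)--(M7) explicitly by transport along the proximity-preserving-and-reflecting isomorphisms of Theorem~\ref{thm: de Vries power}(2), where the paper simply asserts that $\alpha$ is a proximity morphism, and you obtain uniqueness from Lemma~\ref{proximity_morphism_on_decr_rep}(2) applied to decreasing decompositions rather than from the uniqueness clause of Theorem~\ref{morphism_uniqueness} --- both arguments are valid, and yours in fact yields the slightly stronger statement that $\alpha$ is the unique proximity morphism extending $\sigma$, independent of the commuting square.
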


\begin{proof}
By Theorem~\ref{morphism_uniqueness}, there is a unique proximity morphism $\sigma^\flat : D[\Id(S)]^\flat \to D[\Id(T)]^\flat$ such that $\sigma^\flat \circ \tau_{\Id(S)} = \tau_{\Id(T)} \circ \sigma$. 
\[
\begin{tikzcd}
\Id(S) \arrow[r, "\sigma"] \arrow[d, hookrightarrow] \arrow[dd, bend right = 40, "\tau_{\Id(S)}"'] & \Id(T) \arrow[d, hookrightarrow]  \arrow[dd, bend left = 40, "\tau_{\Id(T)}"] \\
S \arrow[d, "\eta_S"] \arrow[r, "\alpha"] & T \arrow[d, "\eta_T"'] \\
D[\Id(S)]^\flat \arrow[r, "\sigma^\flat"'] & D[\Id(T)]^\flat
\end{tikzcd}
\]
Set $\alpha = \eta_T^{-1}\circ \sigma^\flat \circ \eta_S$. Then $\alpha$ is a proximity morphism and $\sigma^\flat \circ \eta_S = \eta_T \circ \sigma$. Let $e \in \Id(S)$. We have $\alpha(e) = \eta_T^{-1}\sigma^\flat\eta_S(e) = \eta_T^{-1}\eta_T\sigma(e) = \sigma(e)$, hence $\alpha$ extends $\sigma$. Finally, since $\sigma^\flat$ is unique, so is $\alpha$.
\end{proof}

Let $\sigma_1 : (B_1, \prec) \to (B_2, \prec)$ and $\sigma_2 : (B_2, \prec) \to (B_3, \prec)$ be de Vries morphisms. We recall that the composition $\sigma_2 \star \sigma_1$ in $\dev$ is defined by 
\[
(\sigma_2 \star \sigma_1)(e) = \bigvee \{ \sigma_2\sigma_1(f) : f \prec e \}.
\]

\begin{theorem}
Proximity Baer-Specker $D$-algebras and proximity morphisms between them form a category $\PBSp_D$, where the composition $\alpha_2\star\alpha_1$ of two proximity morphisms $\alpha_1:S_1\to S_2$ and $\alpha_2:S_2\to S_3$ is the unique proximity morphism extending the de Vries morphism $\alpha_2|_{\Id(S_2)} \star \alpha_1|_{\Id(S_1)}$. It is given by
\[
(\alpha_2\star\alpha_1)(s)=\bigvee\{\alpha_2\alpha_1(t) : t\lhd s\}.
\]
\end{theorem}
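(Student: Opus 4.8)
The plan is to install the category structure on $\PBSp_D$ by transporting it from $\dev$ along the passage to idempotents, with the central tool being the uniqueness in Corollary~\ref{cor:7.8}: a proximity morphism between proximity Baer-Specker $D$-algebras is completely determined by its restriction to idempotents, so any two proximity morphisms $S\to T$ agreeing on $\Id(S)$ coincide. First I would check that the composition is well defined. Given $\alpha_1:S_1\to S_2$ and $\alpha_2:S_2\to S_3$, Lemma~\ref{prop:5.4}(1) shows that $\alpha_1|_{\Id(S_1)}$ and $\alpha_2|_{\Id(S_2)}$ are de Vries morphisms, so their $\star$-composite $\alpha_2|_{\Id(S_2)}\star\alpha_1|_{\Id(S_1)}$ is again a de Vries morphism because $\dev$ is a category. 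Corollary~\ref{cor:7.8} then yields the unique proximity morphism $\alpha_2\star\alpha_1:S_1\to S_3$ extending it, and by construction $(\alpha_2\star\alpha_1)|_{\Id(S_1)}=\alpha_2|_{\Id(S_2)}\star\alpha_1|_{\Id(S_1)}$.

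The associativity and identity laws then reduce formally to the corresponding facts in $\dev$. Restricting either bracketing of a triple composite to idempotents produces, using the boxed identity $(\alpha_2\star\alpha_1)|_{\Id}=\alpha_2|_{\Id}\star\alpha_1|_{\Id}$ repeatedly, the same triple de Vries composite up to associativity in $\dev$; since both bracketings are proximity morphisms with equal restrictions to idempotents, the uniqueness above forces them equal. The identity on $S$ is taken to be the unique proximity morphism extending the identity de Vries morphism on $\Id(S)$ (supplied by Corollary~\ref{cor:7.8}); for any $\alpha:S\to T$ the composite $\alpha\star\mathrm{id}_S$ restricts on idempotents to $\alpha|_{\Id}\star\mathrm{id}_{\Id(S)}=\alpha|_{\Id}$, so $\alpha\star\mathrm{id}_S=\alpha$ by uniqueness, and symmetrically on the other side.

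The substantive part is the closed formula $(\alpha_2\star\alpha_1)(s)=\bigvee\{\alpha_2\alpha_1(t):t\lhd s\}$, where $\alpha_2\alpha_1$ denotes ordinary composition. Writing $\beta=\alpha_2\star\alpha_1$ and $\sigma=\beta|_{\Id}$, I would establish two inequalities through decreasing decompositions. For any $t$, write $t=a_0+\sum_i b_ie_i$ in decreasing form; applying Lemma~\ref{prop:5.4}(2) to $\alpha_1$ and then to $\alpha_2$ gives $\alpha_2\alpha_1(t)=a_0+\sum_i b_i\,\alpha_2\alpha_1(e_i)$, while the same lemma for $\beta$ gives $\beta(t)=a_0+\sum_i b_i\,\sigma(e_i)$. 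Since $\sigma(e_i)=\bigvee\{\alpha_2\alpha_1(f):f\prec e_i\}$ and each $f\prec e_i$ forces $\alpha_2\alpha_1(f)\le\alpha_2\alpha_1(e_i)$ by monotonicity of de Vries morphisms on idempotents, we get $\sigma(e_i)\le\alpha_2\alpha_1(e_i)$ and hence $\beta(t)\le\alpha_2\alpha_1(t)$. Conversely, given $t\lhd r$, I would put $t,r$ in compatible decreasing form via Lemma~\ref{* properties}(2), so Proposition~\ref{prop:4.9} yields $t^\flat(a_i)\prec r^\flat(a_i)$ for each $i$; setting $e_i=t^\flat(a_i)$ and $g_i=r^\flat(a_i)$, the relation $e_i\prec g_i$ places $\alpha_2\alpha_1(e_i)$ among the joinands defining $\sigma(g_i)$, whence $\alpha_2\alpha_1(e_i)\le\sigma(g_i)$ and therefore $\alpha_2\alpha_1(t)=a_0+\sum_i b_i\,\alpha_2\alpha_1(e_i)\le a_0+\sum_i b_i\,\sigma(g_i)=\beta(r)$, the comparison of decreasing forms being valid because each $b_i>0$ and each difference of idempotents is nonnegative in the $f$-algebra $S_3$.

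With these in hand, the formula follows by showing $\beta(s)$ is the least upper bound of $\{\alpha_2\alpha_1(t):t\lhd s\}$. For the upper-bound property, given $t\lhd s$ I would interpolate $t\lhd r\lhd s$ by (P9), obtaining $\alpha_2\alpha_1(t)\le\beta(r)\le\beta(s)$, where the last step is (M4) for $\beta$. For minimality, any upper bound $u$ of the set satisfies $u\ge\alpha_2\alpha_1(t)\ge\beta(t)$ for every $t\lhd s$, so $u\ge\bigvee\{\beta(t):t\lhd s\}=\beta(s)$, again by (M4). I expect this formula to be the main obstacle: the associativity and identity laws are formal consequences of idempotent-restriction uniqueness, whereas the formula requires genuinely comparing the honest composite $\alpha_2\alpha_1$ (which need not be a proximity morphism) against $\beta$ at the level of decreasing decompositions, and keeping careful track of which idempotent-image inequality is furnished by de Vries composition versus by ordinary composition.
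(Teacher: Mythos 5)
Your proposal is correct, and its overall skeleton matches the paper's: the composition is defined via Corollary~\ref{cor:7.8} as the unique proximity morphism extending the de Vries composite of the restrictions, associativity and identities are pushed down to $\dev$ by the uniqueness of extensions, and the upper-bound half of the displayed formula is proved exactly as in the paper, using compatible decreasing decompositions (Lemma~\ref{* properties}(2)), Proposition~\ref{prop:4.9}, and Lemma~\ref{prop:5.4}(2). Where you genuinely diverge is in the least-upper-bound half. The paper expresses $(\alpha_2\star\alpha_1)(s)$ itself as a join of elements of the set $\{\alpha_2\alpha_1(t): t\lhd s\}$: it invokes the infinite distributive law \cite[Eqn.~XIII.3(8)]{Bir79} to pull the joins $\bigvee\{\alpha_2\alpha_1(e): e\lhd e_i\}$ out of the sum $a_0+\sum_i b_i(\alpha_2\star\alpha_1)(e_i)$, and then uses (P6) and (P7) to check that each resulting index element $a_0+\sum_i b_ik_i$ with $k_i\lhd e_i$ satisfies $a_0+\sum_i b_ik_i\lhd s$. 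You instead prove the auxiliary pointwise inequality $\beta(t)\le\alpha_2\alpha_1(t)$ for \emph{every} $t$ (via monotonicity of $\alpha_2\alpha_1$ on idempotents and the definition of de Vries composition), and then conclude minimality from axiom (M4) applied to $\beta$: any upper bound of $\{\alpha_2\alpha_1(t): t\lhd s\}$ dominates $\{\beta(t): t\lhd s\}$, whose join is $\beta(s)$. This is shorter and avoids both the lattice-theoretic distributivity identity and the (P6)/(P7) verification (it also sidesteps a detail the paper glosses over, namely that the tuples $(k_i)$ must be made decreasing before Lemma~\ref{prop:5.4}(2) can be applied to $a_0+\sum_i b_ik_i$); what the paper's argument buys in exchange is an explicit representation of $(\alpha_2\star\alpha_1)(s)$ as a join of honest composites $\alpha_2\alpha_1(t)$ with $t\lhd s$, which is more constructive. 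Two small remarks: your interpolation of $t\lhd r\lhd s$ via (P9) in the upper-bound step is redundant, since your own second inequality with $r=s$ already gives $\alpha_2\alpha_1(t)\le\beta(s)$; and your identity morphism (the unique extension of the identity de Vries morphism) coincides with the paper's (the identity function), since the identity function is a proximity morphism and extensions are unique.
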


\begin{proof}
Let $\alpha_1:S_1\to S_2$ and $\alpha_2:S_2\to S_3$ be proximity morphisms. By Proposition~\ref{prop:5.4}, their restrictions to the idempotents are de Vries morphisms. Therefore, $\alpha_2|_{\Id(S_2)}\star\alpha_1|_{\Id(S_1)}$ is a de Vries morphism. By Corollary~\ref{cor:7.8}, $\alpha_2\star\alpha_1$ is the unique proximity morphism extending $\alpha_2|_{\Id(S_2)}\star\alpha_1|_{\Id(S_1)}$. That $\star$ is associative follows from Corollary~\ref{cor:7.8} and the fact that de Vries composition is associative. Since identity morphisms are identity functions, it is then clear that $\PBSp_D$ forms a category.

It is left to show that $(\alpha_2\star\alpha_1)(s)=\bigvee\{\alpha_2\alpha_1(t) : t\lhd s\}$.
First, suppose that $t \lhd s$. By Lemma~\ref{* properties}(2), write $s = a_0 + \sum_{i=1}^n b_i s^\flat(a_i)$ and $t = a_0 + \sum_{i=1}^n b_i t^\flat(b_i)$. Set $e_i = s^\flat(a_i)$ and $f_i = t^\flat(a_i)$. Then $f_i \lhd e_i$ for each $i$ by Theorem~\ref{prop:4.9}. We have $\alpha_2\alpha_1(t) =  a_0 + \sum_{i=1}^n b_i \alpha_2\alpha_1(f_i)$. Also, since $\alpha_2\star\alpha_1$ is a proximity morphism, $(\alpha_2 \star \alpha_1)(s) = a_0 + \sum_i b_i (\alpha_2\star\alpha_1)(e_i)$ by Lemma~\ref{proximity_morphism_on_decr_rep}(2). As $\alpha_2|_{\Id(S_2)}\star\alpha_1|_{\Id(S_1)}$ is a de Vries morphism, $(\alpha_2\star\alpha_1)(e_i) = \bigvee \{ \alpha_2\alpha_1(e) : e \in \Id(S_1), e \lhd e_i \}$, so $\alpha_2\alpha_1(t) \le (\alpha_2 \star \alpha_1)(s)$. Therefore, $(\alpha_2\star\alpha_1)(s)$ is an upper bound of $\{\alpha_2\alpha_1(t) : t\lhd s\}$. To see that it is the least upper bound, let $r$ be an upper bound of $\{ \alpha_2\alpha_1(t) : t \lhd s\}$. Let $E_i = \{ e : e \lhd e_i \}$. By \cite[Eqn.~XIII.3(8)]{Bir79} and Lemma~\ref{proximity_morphism_on_decr_rep}(2),
\begin{align*}
(\alpha_2\star\alpha_1)(s) &= a_0 + \sum_{i=1}^n b_i (\alpha_2\star\alpha_1)(e_i) = a_0 + \sum_{i=1}^n b_i \bigvee \{\alpha_2\alpha_1(e) : e \in E_i\} \\
&= \bigvee \{ a_0 + \sum_{i=1}^n b_i \alpha_2\alpha_1(k_i) : k_i \in E_i, 1 \le i \le n \} = \bigvee \{ \alpha_2\alpha_1(a_0 + \sum_{i=1}^n b_i k_i) : k_i \in E_i \}.
\end{align*}
Since $k_i \lhd e_i$ for each $i$, we have $a_0 + \sum_{i=1}^n b_i k_i \lhd s$ by (P6) and (P7). Therefore, $(\alpha_2\star\alpha_1)(s) = \bigvee \{ \alpha_2\alpha_1(a_0 + \sum_{i=1}^n b_i k_i) : k_i \in E_i \} \le r$.
Thus, $(\alpha_2\star\alpha_1)(s)=\bigvee\{\alpha_2\alpha_1(t) : t\lhd s\}$.
\end{proof}

Although proximity morphisms are not in general $D$-algebra homomorphisms, as was shown in \cite[Lem.~8.3]{BMMO15b} with a choice-free proof, proximity isomorphisms are $D$-algebra isomorphisms that preserve and reflect proximity. This is similar to what happens in $\dev$ \cite[Prop.~I.5.5]{deV62}.

\begin{lemma}\label{proximity isomorphism} \cite[Lem.~8.3]{BMMO15b}
Let $(S,\lhd),(T,\lhd)\in\PBSp_D$ and let $\alpha : S \to T$ be a proximity morphism. Then $\alpha$ is an isomorphism in $\PBSp_D$ iff $\alpha$ is a $D$-algebra isomorphism such that $s \lhd t$ in $(S,\lhd)$ iff $\alpha(s) \lhd \alpha(t)$ in $(T,\lhd)$.
\end{lemma}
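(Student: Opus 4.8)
The plan is to prove the two implications separately, modeling the argument on the de Vries analogue \cite[Prop.~I.5.5]{deV62} and transferring information between $S$ and $\Id(S)$ through the machinery of Sections~\ref{sec:lifting}--\ref{sec:7}. Throughout I will use the description of composition in $\PBSp_D$, namely $(\alpha_2\star\alpha_1)(s)=\bigvee\{\alpha_2\alpha_1(t):t\lhd s\}$ with $\alpha_2\alpha_1$ ordinary function composition, together with the fact that identity morphisms are identity functions. I would start with the easier (reverse) implication.

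For the reverse implication, suppose $\alpha$ is a $D$-algebra isomorphism with $s\lhd t$ iff $\alpha(s)\lhd\alpha(t)$. By Theorem~\ref{thm: morphisms are l-algebra morphisms}, $\alpha$ and its function inverse $\alpha^{-1}$ are $\ell$-algebra isomorphisms, hence order isomorphisms fixing $D$. The first task is to check that $\alpha^{-1}$ is again a proximity morphism: (M1), (M2), (M5)--(M7) are immediate since $\alpha^{-1}$ is an $\ell$-algebra homomorphism fixing $D$; (M3) reduces, via $-\alpha^{-1}(-s)=\alpha^{-1}(s)$, to the implication $s\lhd t\Rightarrow\alpha^{-1}(s)\lhd\alpha^{-1}(t)$, which is exactly the reflection hypothesis applied to $\alpha(\alpha^{-1}(s))=s\lhd t=\alpha(\alpha^{-1}(t))$. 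For (M4), applying the order isomorphism $\alpha^{-1}$ to the instance of (M4) for $\alpha$ yields the identity $t=\bigvee\{s:s\lhd t\}$, and applying $\alpha^{-1}$ to this identity gives $\alpha^{-1}(t)=\bigvee\{\alpha^{-1}(s):s\lhd t\}$. Finally I would compute $(\alpha^{-1}\star\alpha)(s)=\bigvee\{\alpha^{-1}\alpha(t):t\lhd s\}=\bigvee\{t:t\lhd s\}=s$ and symmetrically $\alpha\star\alpha^{-1}=\mathrm{id}_T$, so $\alpha^{-1}$ is a two-sided inverse of $\alpha$ in $\PBSp_D$.

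For the forward implication, suppose $\alpha$ is an isomorphism in $\PBSp_D$ with inverse $\beta$. By Lemma~\ref{prop:5.4}(1), both $\alpha$ and $\beta$ restrict to de Vries morphisms $\sigma=\alpha|_{\Id(S)}$ and $\tau=\beta|_{\Id(T)}$. Since $\PBSp_D$-composition is, by definition, the unique proximity morphism extending the $\dev$-composition of the restrictions, restricting $\beta\star\alpha=\mathrm{id}_S$ and $\alpha\star\beta=\mathrm{id}_T$ to idempotents yields $\tau\star\sigma=\mathrm{id}_{\Id(S)}$ and $\sigma\star\tau=\mathrm{id}_{\Id(T)}$ in $\dev$. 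Hence $\sigma$ is an isomorphism in $\dev$, so by \cite[Prop.~I.5.5]{deV62} it is a boolean isomorphism that preserves and reflects $\prec$. Now $\alpha$ is a proximity morphism extending $\sigma$, so by the uniqueness in Corollary~\ref{cor:7.8} we have $\alpha=\eta_T^{-1}\circ\sigma^\flat\circ\eta_S$, where $\sigma^\flat(f)=\sigma\circ f$. Because $\sigma$ is a boolean isomorphism it preserves the finite joins and meets appearing in Theorem~\ref{* axiomitization}, so $\sigma^\flat$ is a $D$-algebra isomorphism; combined with Proposition~\ref{prop: relation between flat and perp} (which makes $\eta_S,\eta_T$ $\ell$-algebra isomorphisms), this shows $\alpha$ is a $D$-algebra isomorphism. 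For preservation and reflection of proximity I would invoke Proposition~\ref{prop:4.9}: since $\alpha(s)^\flat=\sigma\circ s^\flat$, we have $\alpha(s)\lhd\alpha(t)$ iff $\sigma(s^\flat(b))\prec\sigma(t^\flat(b))$ for all $b\in D$, which by preservation and reflection of $\prec$ under $\sigma$ is equivalent to $s^\flat(b)\prec t^\flat(b)$ for all $b$, i.e.\ to $s\lhd t$.

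The delicate step is in the forward implication: upgrading the information that $\alpha|_{\Id(S)}$ is a de Vries isomorphism to the conclusion that $\alpha$ itself is a $D$-algebra isomorphism. Since proximity morphisms are not $D$-algebra homomorphisms in general, this cannot be checked directly; the argument must route through the uniqueness in Corollary~\ref{cor:7.8} to identify $\alpha$ with $\eta_T^{-1}\sigma^\flat\eta_S$ and then exploit that a boolean isomorphism induces a $D$-algebra isomorphism of boolean powers. Verifying this last induced-isomorphism claim, that $\sigma^\flat$ respects the operations of Theorem~\ref{* axiomitization}, is the part demanding the most care, though it is ultimately routine because the relevant joins are all finite.
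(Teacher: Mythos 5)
Your proposal is correct in substance, but there is nothing in the paper to compare it against: the paper does not prove this lemma at all, it quotes it as \cite[Lem.~8.3]{BMMO15b}, remarking only that the proof there is choice-free. So what you have written is a genuine self-contained alternative, assembled from the paper's own machinery: the reverse direction verifies (M1)--(M7) for the function inverse $\alpha^{-1}$ and then computes $\alpha^{-1}\star\alpha$ and $\alpha\star\alpha^{-1}$ from the composition formula, while the forward direction restricts to idempotents, imports the de Vries analogue \cite[Prop.~I.5.5]{deV62}, and uses the uniqueness in Corollary~\ref{cor:7.8} together with Proposition~\ref{prop:4.9} to transfer everything back up to $S$. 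That route is sound, and it buys exactly what the paper wants (a choice-free argument), at the cost of leaning on the de Vries-level fact from \cite{deV62} rather than arguing directly on the Specker side. Two spots need tightening. First, your (M4) step produces the identity $t=\bigvee\{s : s\lhd t\}$ only in $S$ (by applying the order isomorphism $\alpha^{-1}$ to (M4) for $\alpha$), but you then need the same identity in $T$, both for (M4) of $\alpha^{-1}$ and for the computation $\alpha\star\alpha^{-1}=\mathrm{id}_T$; as stated, ``applying $\alpha^{-1}$ to this identity'' does not typecheck, since the identity lives in $S$. The fix is immediate and non-circular: apply (M4) for $\alpha$ at $\alpha^{-1}(t')$ and use preservation, reflection, and surjectivity to rewrite $\{\alpha(s) : s\lhd \alpha^{-1}(t')\}$ as $\{s' : s'\lhd t'\}$, or simply invoke that $\mathrm{id}_T$ is a proximity morphism, which the paper's category structure already presupposes. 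Second, when you argue that $\sigma^\flat$ is a $D$-algebra isomorphism, Theorem~\ref{* axiomitization} only gives formulas for $f+g$, for $bf$ with $b>0$, and for $fg$ with $f,g\ge 0$; you must also handle negation (via the identity $(-f)(a)=\bigwedge\{\lnot f(b) : b>-a\}$ established in the proof of Theorem~\ref{morphism_uniqueness}, which a boolean isomorphism respects since the meet is effectively finite) and general products (e.g.\ by writing $fg=(f+a)(g+a)-a(f+g)-a^2$ for a suitable $a\in D$ making $f+a, g+a\ge 0$). With these repairs your argument goes through and remains choice-free.
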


We are finally ready to give a choice-free proof that $\PBSp_D$ is equivalent to $\dev$.

\begin{theorem} [Main Theorem] \label{equiv} 
The category $\PBSp_D$ of proximity Baer-Specker $D$-algebras is equivalent to the category $\dev$ of de Vries algebras.
\end{theorem}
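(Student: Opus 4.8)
The plan is to establish the equivalence by constructing explicit functors $F : \dev \to \PBSp_D$ and $G : \PBSp_D \to \dev$ and exhibiting natural isomorphisms $GF \cong \Id_{\dev}$ and $FG \cong \Id_{\PBSp_D}$, relying on all the machinery built up in the preceding sections (in particular the de Vries power construction $(B,\prec) \mapsto (D[B]^\flat, \prec^\flat)$ and the lifting of de Vries morphisms $\sigma \mapsto \sigma^\flat$). The functor $F$ sends a de Vries algebra $(B,\prec)$ to its de Vries power $(D[B]^\flat, \prec^\flat)$, which is a proximity Baer-Specker $D$-algebra by Theorem~\ref{thm: de Vries power}(1), and sends a de Vries morphism $\sigma$ to the proximity morphism $\sigma^\flat$ from Theorem~\ref{morphism_uniqueness}. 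The functor $G$ sends a proximity Baer-Specker $D$-algebra $(S,\lhd)$ to $(\Id(S), \prec)$, where $\prec$ is the restriction of $\lhd$; this is a de Vries algebra because $\Id(S)$ is a complete boolean algebra (as $S$ is Baer) and $\prec$ is a de Vries proximity by Proposition~\ref{prop:4.6}. On morphisms, $G$ sends a proximity morphism $\alpha$ to its restriction $\alpha|_{\Id(S)}$, which is a de Vries morphism by Lemma~\ref{prop:5.4}(1).

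First I would verify that $F$ and $G$ are genuinely functors. For $F$, the key points are that $(-)^\flat$ preserves identities (immediate, since $\Id^\flat$ is the identity function by the uniqueness in Theorem~\ref{morphism_uniqueness}) and respects composition; the latter follows from the uniqueness clause of Theorem~\ref{morphism_uniqueness} together with the definition of $\star$-composition, since $(\sigma_2 \star \sigma_1)^\flat$ and $\sigma_2^\flat \star \sigma_1^\flat$ are both proximity morphisms extending the same de Vries morphism. For $G$, functoriality on morphisms is essentially the statement that restriction to idempotents commutes with $\star$-composition, which is built into the very definition of composition in $\PBSp_D$ given in the theorem preceding Lemma~\ref{proximity isomorphism} (there $\alpha_2 \star \alpha_1$ is \emph{defined} as the unique extension of $\alpha_2|_{\Id} \star \alpha_1|_{\Id}$).

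Next I would construct the natural isomorphisms. For $GF \cong \Id_{\dev}$: given $(B,\prec)$, the composite $GF(B,\prec) = \Id(D[B]^\flat)$ with its induced proximity, and the isomorphism $\tau_B : B \to \Id(D[B]^\flat)$ of Remark~\ref{rem: idempotents of D[B]flat} is the natural candidate; one checks using Theorem~\ref{de Vries lift} that $\tau_B$ is a de Vries isomorphism (i.e. $e \prec f$ iff $\tau_B(e) \prec \tau_B(f)$), and naturality is exactly the commuting square $\sigma^\flat \circ \tau_A = \tau_B \circ \sigma$ of Theorem~\ref{morphism_uniqueness}, read off on idempotents. For $FG \cong \Id_{\PBSp_D}$: given $(S,\lhd)$ with $B = \Id(S)$, the composite $FG(S,\lhd) = (D[B]^\flat, \prec^\flat)$, and the natural candidate is $\eta_S = (-)^\flat : S \to D[B]^\flat$, which is an $\ell$-algebra isomorphism by Proposition~\ref{prop: relation between flat and perp} and satisfies $s \lhd t$ iff $s^\flat \prec^\flat t^\flat$ by Theorem~\ref{thm: de Vries power}(2); hence by Lemma~\ref{proximity isomorphism} it is an isomorphism in $\PBSp_D$. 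Naturality of $\eta$ is the square $\sigma^\flat \circ \eta_S = \eta_T \circ \alpha|_{\Id}$ established in Corollary~\ref{cor:7.8}.

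The main obstacle I anticipate is the naturality verification for $\eta$, together with the bookkeeping needed to show that $\eta$ is natural with respect to an \emph{arbitrary} proximity morphism $\alpha$, not merely one arising as some $\sigma^\flat$. The subtle point is that Corollary~\ref{cor:7.8} produces, for each de Vries morphism $\sigma$ on idempotents, a unique proximity morphism extending it; to get naturality one must argue that every proximity morphism $\alpha : S \to T$ \emph{equals} the lift of its own restriction $\alpha|_{\Id(S)}$. This is precisely what the uniqueness in Corollary~\ref{cor:7.8} gives, combined with Lemma~\ref{prop:5.4}(2), which shows that a proximity morphism is completely determined by its values on idempotents via the decreasing-decomposition formula $\alpha(a_0 + \sum_i b_i e_i) = a_0 + \sum_i b_i \alpha(e_i)$. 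Thus the genuine content is that restriction to idempotents $G$ is faithful and that $F$ is a quasi-inverse, and once Lemma~\ref{prop:5.4}(2) is invoked the two triangle/square diagrams close automatically. I would organize the proof so that this determination-by-idempotents fact is stated up front, making both naturality checks and the functoriality of $F$ fall out uniformly.
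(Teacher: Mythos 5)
Your proposal is correct and follows essentially the same route as the paper: the same pair of functors ($(B,\prec)\mapsto(D[B]^\flat,\prec^\flat)$ with $\sigma\mapsto\sigma^\flat$, and $(S,\lhd)\mapsto(\Id(S),\lhd|_{\Id(S)})$ with $\alpha\mapsto\alpha|_{\Id(S)}$), the same natural isomorphisms $\eta_S=(-)^\flat$ and $\tau_B$, and the same supporting results (Theorem~\ref{thm: de Vries power}, Theorem~\ref{morphism_uniqueness}, Proposition~\ref{prop: relation between flat and perp}, Corollary~\ref{cor:7.8}, Lemma~\ref{proximity isomorphism}). The subtlety you flag---that naturality of $\eta$ for an arbitrary proximity morphism $\alpha$ requires knowing $\alpha$ is the unique lift of its own restriction to idempotents---is exactly how the paper closes that square, via the uniqueness clause of Corollary~\ref{cor:7.8} resting on Lemma~\ref{prop:5.4}(2).
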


\begin{proof}
Define a functor $\Id:\PBSp_D\to\dev$ by sending $(S,\lhd)\in\PBSp_D$ to the de Vries algebra $(\Id(S),\lhd|_{\Id(S)})$ and a proximity morphism $\alpha : S \to T$ to the de Vries morphism $\alpha|_{\Id(S)}$. 
It follows from Proposition~\ref{prop:4.6}, Lemma~\ref{prop:5.4}, and the definition of compositions in $\PBSp_D$ and $\dev$ that $\Id$ is well defined.

Define a functor $\Sp:\dev \to \PBSp_D$ by sending $(B,\prec)\in\dev$ to the de Vries power $D$-algebra $(D[B]^\flat,\prec^\flat)$ and a de Vries morphism $\sigma : A \to B$ to the proximity morphism $\sigma^\flat$. 
It follows from Theorem~\ref{thm: de Vries power}, Theorem~\ref{morphism_uniqueness}, and the definition of compositions in $\PBSp_D$ and $\dev$ that $\Sp$ is well defined.

To see that $\Id$ and $\Sp$ form an equivalence, we show that there are natural isomorphisms $\eta : 1_{\PBSp_D} \to \Sp \circ \Id$ and $\tau : \Sp \circ \Id \to 1_{\PBSp_D}$. We define $\eta$ by letting its components be $\eta_S : S \to D[\Id(S)]^\flat$, where $\eta_S(s) = s^\flat$ (see Definition~\ref{def: eta}).  To see that $\eta$ is a natural isomorphism, let $\alpha : (S_1, \lhd) \to (S_2, \lhd)$ be a proximity morphism. Set $B_i = \Id(S_i)$ and $\sigma = \alpha|_{B_1}$. Then $\sigma^\flat = \Sp(\Id(\alpha))$, and we have the following diagram,
\[
\begin{tikzcd}[column sep = 5pc]
S_1 \arrow[r, "\alpha"] \arrow[d, "\eta_{S_1}"'] & S_2 \arrow[d, "\eta_{S_2}"] \\
D[B_1]^\flat \arrow[r, "\Sp(\Id(\alpha))"'] & D[B_2]^\flat
\end{tikzcd}
\]
which commutes by Corollary~\ref{cor:7.8}. Thus, $\eta$ is a natural transformation, and it is then a natural isomorphism by Proposition~\ref{prop: relation between flat and perp}.

We define $\tau : 1_{\dev} \to \Id \circ \Sp$ by letting its components be $\tau_B : B \to \Id(D[B]^\flat)$, where $\tau_B(e) = e^\flat$ (see Remark~\ref{rem: idempotents of D[B]flat}). To see that $\tau$ is a natural isomorphism, let $\sigma : (B_1, \prec) \to (B_2, \prec)$ be a de Vries morphism.  We have the following diagram,
\[
\begin{tikzcd}[column sep = 5pc]
B_1 \arrow[d, "\tau_{B_1}"'] \arrow[r, "\sigma"] & B_2 \arrow[d, "\tau_{B_2}"] \\
\Id(D[B_1]^\flat) \arrow[r, "\Id(\Sp(\sigma))"'] & \Id(D[B_2]^\flat)
\end{tikzcd}
\]
which commutes by Theorem~\ref{morphism_uniqueness}. Thus, $\tau$ is a natural transformation, and it is then a natural isomorphism by Remark~\ref{rem: idempotents of D[B]flat}.
Consequently, $\func{Sp}$ and $\Id$ establish an equivalence of $\PBSp_D$ and $\dev$.
\end{proof}

\def\cprime{$'$}
\begin{remark}
Let $A$ be an algebra of a fixed type. Generalizing \cite[p.~5]{Fuc63}, we say that a binary relation $R$ on $A$ is \emph{compatible} with the operations of $A$ if for each $n$-ary operation $\lambda$ on $A$ there is a subalgebra $B$ of $A$ such that from $a_1 \mathrel{R} b_1, \dots,  a_n \mathrel{R} b_n$ it follows that $\lambda(a_1,\dots,a_n) \mathrel{R} \lambda(b_1,\dots, b_n)$ or $\lambda(b_1,\dots, b_n) \mathrel{R} \lambda(a_1,\dots,a_n)$ for each $a_1,\dots, a_n, b_1, \dots, b_n \in B$. Such a pair $(A,R)$ is a particular case of an algebraic system of Malcev \cite{Mal73}. Let $B$ be a boolean algebra and let $r$ be a binary relation on $B$. We let $A[B]^*$ be the 
boolean power of $A$, as defined by Foster and discussed in Section~\ref{sec:specker}. Define a relation ${\mathcal R}$ on $A[B]^*$ by $$f \: {\mathcal R} \: g \mbox{ iff } \displaystyle{\bigvee} \{f(b):a \mathrel{R} b\} \mathrel{r} \displaystyle{\bigvee} \{g(b):a \mathrel{R} b\} \mbox{ for all } a \in A.$$ Then ${\mathcal{R}}$ lifts $r$ and $R$ to the boolean power $A[B]^*$. If $A$ is a totally ordered domain, $R$ is $\le$ and $r$ is $\prec$, then this generalization of a 
boolean power is exactly our de Vries power. It would be interesting  to study in more detail this generalization of 
boolean powers when additional relations are also at play. Of course, the binary relations $R$ and $r$ can further be generalized to arbitrary relations. A particular case of such a generalization, when $R$ is present but $r$ is not, is briefly discussed by Banaschewski and Nelson \cite[Concluding Remarks]{BN80}. 
\end{remark}

%

\def\cprime{$'$}
\providecommand{\bysame}{\leavevmode\hbox to3em{\hrulefill}\thinspace}
\providecommand{\MR}{\relax\ifhmode\unskip\space\fi MR }
\providecommand{\MRhref}[2]{%
  \href{http://www.ams.org/mathscinet-getitem?mr=#1}{#2}
}
\providecommand{\href}[2]{#2}

\end{document}